\newcommand{\vertiii}[1]{{\left\vert\kern-0.25ex\left\vert\kern-0.25ex\left\vert #1
    \right\vert\kern-0.25ex\right\vert\kern-0.25ex\right\vert}}
\numberwithin{equation}{section}
\numberwithin{figure}{section}
\numberwithin{table}{section}
\theoremstyle{plain}
\newtheorem{theo}{Theorem}[section]
\newtheorem{lemm}[theo]{Lemma}
\newtheorem{coro}[theo]{Corollary}
\newtheorem{prop}[theo]{Proposition}
\newtheorem{rem}{Remark}
\def\beq{\begin{equation}}
\def\eeq{\end{equation}}
\def\R{\mathbb{R}}
\def\Z{\mathbb{Z}}
\def\C{\mathbb{C}}
\def\sm{\setminus}
\def\({\left(}
\def\){\right)}
\def\emin{{\underbar E}}
\def\ep{\varepsilon}
\def\pep{{\varepsilon'}}
\def\epp{{\varepsilon'}}
\def\eps{\varepsilon}
\def\te{{\tilde {\eps}}}
\def\tep{{\tilde {\eps}}}
\def\ptep{\tilde\varepsilon^\prime{}}
\def\lamep{{\lambda_\ep}}
\def\ltep{{|\log\tilde {\eps}|}}
\def\lptep{{|\log\tilde {\eps}'|}}
\def\la{\lambda}
\def\vphi{\varphi}
\def\O{\Omega}
\def\pO{{\Omega'}}
\def\om{\omega}
\def\pom{{\omega'}}
\def\oma{{\om_\alpha}}
\def\dm{\frac12}
\def\pjie{{j_{1,\ep}'}}
\def\pjje{{j_{2,\ep}'}}
\def\Jie{{j_{1,\ep}}}
\def\Jje{{j_{2,\ep}}}
\def\Jl{{j_{\beta}}}
\def\Jil{{j_{1,\beta}}}
\def\Jjl{{j_{2,\beta}}}
\def\rhoie{\rho_{1,\ep}}
\def\rhoje{\rho_{2,\ep}}
\def\rie{u_{1,\ep}}
\def\rje{u_{2,\ep}}
\def\prie{u_{1,\ep}'}
\def\prje{u_{2,\ep}'}
\def\vie{v_{1,\ep}}
\def\vje{v_{2,\ep}}
\def\viesq{{v^2_{1,\ep}}}
\def\vjesq{{v^2_{2,\ep}}}
\def\proie{{\rho_{1,\ep}'}}
\def\proje{{\rho_{2,\ep}'}}
\def\roie{\rho_{1,\ep}}
\def\roje{\rho_{2,\ep}}
\def\riep{u_{1,\epp}}
\def\rjep{u_{2,\epp}}
\def\ri{u_{1}}
\def\rj{u_{2}}
\def\mae{m_{\alpha,\ep}}
\def\Dep{\Delta_\ep}
\def\dep{d_\ep}
\def\depr{d_{\ep,r}}
\def\dept{d_{\ep,\theta}}
\def\mep{{m_\ep}}
\def\la{\ell_\alpha}
\def\tomt{{\tilde\om_t}}
\def\tUe{{\widetilde U_\ep}}
\def\vpie{\varphi_{1,\ep}}
\def\vpje{\varphi_{2,\ep}}
\def\hie{h_{1,\ep}}
\def\hje{h_{2,\ep}}
\def\nie{n_{1,\ep}}
\def\nje{n_{2,\ep}}
\def\muie{\mu_{1,\ep}}
\def\muje{\mu_{2,\ep}}
\def\muiie{{\mu^1_{i,\ep}}}
\def\mujie{{\mu^2_{i,\ep}}}
\def\aie{a_{i,\ep}}
\def\bie{b_{i,\ep}}
\def\np{{\nabla^\perp}}
\def\nue{{\nu_\ep}}
\def\thetaper{{\theta_\ep}}
\def\EOep{{E^\Omega_{\ep,\delta}}}
\def\pE{{E^\pO_{\pep,\delta}}}
\def\W{{W_\ep}}
\def\El{{J_\beta}}
\DeclareMathOperator{\per}{per}
\DeclareMathOperator{\dist}{dist}
\DeclareMathOperator*{\argmin}{argmin}
\DeclareMathOperator{\curl}{curl}
\DeclareMathOperator{\diver}{div}
\def\Xint#1{\mathchoice
   {\XXint\displaystyle\textstyle{#1}}%
   {\XXint\textstyle\scriptstyle{#1}}%
   {\XXint\scriptstyle\scriptscriptstyle{#1}}%
   {\XXint\scriptscriptstyle\scriptscriptstyle{#1}}%
   \!\int}
\def\XXint#1#2#3{{\setbox0=\hbox{$#1{#2#3}{\int}$}
     \vcenter{\hbox{$#2#3$}}\kern-.5\wd0}}
\def\dashint{\Xint-}
\begin{document}

\title[]{Vortex patterns and sheets in segregated Two Component Bose-Einstein condensates}

\author{Amandine Aftalion}
\address{Ecole des Hautes Etudes en Sciences Sociales, PSL Research University, CNRS UMR 8557, Centre d'Analyse et de Math\'ematique Sociales, 54 Boulevard Raspail, 75006 Paris, France.}
\email{amandine.aftalion@ehess.fr}

\author{Etienne Sandier}
\address{Universit´e Paris-Est,
LAMA - CNRS UMR 8050,
61, Avenue du G´en´eral de Gaulle, 94010 Cr´eteil, France.}
\email{sandier@u-pec.fr}

\date{\today}

\begin{abstract}
We study minimizers of a Gross--Pitaevskii energy
describing  a two-component Bose-Einstein condensate
  set into
rotation. We consider the case of segregation
of the components in the Thomas-Fermi regime, where a small
parameter $\eps$ conveys a singular perturbation.  We estimate the energy as a term due to a perimeter minimization and a term due to rotation. In particular, we prove a new estimate concerning the error of a Modica Mortola type energy away from the interface. For large rotations, we show that the interface between the components gets long, which is a first indication towards vortex sheets.
\end{abstract}

\maketitle

\section{Introduction}
In this paper, we study the vortex
structure in rotating immiscible two-component Bose Einstein condensates (BEC) in two dimensions. Indeed, when a two component condensate is set to high rotation, the ground state goes from a situation of segregation with vortices in each component, to a vortex sheet structure, as explained in \cite{AM,KT}.
 At zero rotation, the interface between the two components is given by a perimeter minimization similar to a Modica Mortola problem \cite{AAJRL2013,GM,GRL}. At higher rotation, there seems to be an interplay between perimeter minimization and vortex energy, leading possibly to a longer interface, as we will see below.
   A general numerical picture of the vortex states in rotating two component
  condensates is addressed by \cite{AM}:
the simulation of the coupled Gross-Pitaevskii
 equations are shown, discussing various configurations of the
vortex states, and, in the case of immiscible BECs, the vortex sheets with striped patterns,
the “serpentine” sheets, and the “rotating droplets.” The case of  droplets
 corresponds to two immiscible components, each having an individual vortex structure.
  The case of sheets is when the immiscible structure is at a lower scale than that of
  the condensates. The sheets can either be straight (stripes) or bent and connected (serpentines).
  There are other condensed-matter systems characterized
by  multicomponent order parameters in which
vortex sheets are observable \cite{helium3}.

 The two component condensate has
two interatomic coupling constants denoted by $g$ (for intracomponents), and $g_{12}$ (for intercomponent).
We confine ourselves to the phase-separated or segregation regime; in a
homogeneous system, the condition is given by $g_{12} > g$. For simplicity, we will set $g=1/\ep^2$
 and $\delta =g_{12} / g$.

  The ground state of a two component BEC is then described by two complex valued wave functions $u_1$ and $u_2$ defined in a domain $D$ of $\R^2$ minimizing
 the
following energy functional:
\begin{equation}\label{eqEnergyOmega}
E^\Omega_{\ep,\delta}(u_1,u_2)=\sum_{j=1}^2 \int_{D}\frac 12 |\nabla u_j-i\Omega x^{\perp} u_j|^2
 +\int_{D} W_{\ep,\delta} (|u_1|^2,|u_2|^2) \,dx
\end{equation}

where \beq\label{Wdelta}W_{\ep,\delta} (|u_1|^2,|u_2|^2)=
\frac 1 {4\ep^2} (1-|u_1|^2)^2 +\frac 1 {4\ep^2} (1-|u_2|^2)^2+\frac \delta {2\ep^2} |u_1|^2 |u_2|^2
-\frac 1 {4\ep^2},\eeq
 in the space
\begin{equation}\label{H_space_definition}
\mathcal{H}=\left\{(u_1,u_2):\ u_j\in H^1(D,\C), \
  \dashint_D |u_j|^2={\alpha_i},\
j=1,2\right\},
\end{equation} where $\dashint_D |u_j|^2=\int_D |u_j|^2 /|D|$.
The parameters $\delta,\ep$ and $\Omega$ are positive: $\Omega$ is the
angular velocity corresponding to the rotation of the condensate, $x^\perp=(-x_2,x_1)$.

 We are interested in studying the existence and behavior of the
minimizers in the limit when $\ep$ is
small, describing strong interactions, also called the Thomas-Fermi limit.

The potential term can be rewritten as
\beq\label{Wdeltabis}W_{\ep,\delta} (|u_1|^2,|u_2|^2)=\frac 1 {4\ep^2} (1-|u_1|^2-|u_2|^2)^2
+\frac {(\delta-1)} {2\ep^2} |u_1|^2 |u_2|^2.\eeq
 We focus on the regime where $(\delta -1)$ is small, like a power or function of $\ep$.
  We expect that in this limit, $(1-|u_1|^2-|u_2|^2)$ and $|u_1| |u_2|$ tend to
   zero, probably on different scales.

   In particular, we want to estimate the energy in order to understand the vortex patterns.

  In order to understand the $\Gamma$-limit of $E_{\ep,\delta}^\Omega$, one needs to understand on the
  one hand the behaviour at $\O=0$ (no rotation) which provides a  perimeter minimization problem, and on the other hand the influence of rotation on the vortex structure.

  At $\O=0$, the problem is real valued. In the limit when $\ep$ tends to 0, the domain $D$ is divided into two domains $D_1$ et $D_2$, s.t. $|D_1|=\alpha_1 |D| $, $|D_2|=\alpha_2 |D| $, and the length of $\partial D_1 \cap D$ is minimized. More precisely,
  for a pair of real valued functions $\ri, \rj:D\to\R$, let
  \beq\label{scalaren}
  F_{\ep,\delta}(\ri, \rj):= \int_D \dm\(|\nabla\ri|^2+ |\nabla\rj|^2\) + W_{\ep,\delta} (u_1^2,u_2^2).\eeq
  We also define for any given $\alpha\in (0,1)$
  \beq m_{\alpha,\ep,\delta} = \min\left\{F_{\ep,\delta}(\ri, \rj)\mid \ri,\rj\in H^1(D,\R), \dashint_D|\ri|^2 = \alpha, \dashint_D|\rj|^2 = 1 - \alpha\right\}\eeq and
\beq\label{minper}\la = \min_{\substack{\om\subset D\\ |\om| = \alpha |D|}} \per_D(\om).\eeq The segregation problem has been studied by many authors \cite{BeLinWeiZhao,BeTer,CaffLin,CaffLin2,SoZ,sourdis}. There are results about the regularity and connectedness, and the fact that the interface goes from one part of the boundary to another \cite{AF,ros,SZ}. There are also results about the $\Gamma$ limit \cite{AAJRL2013,GRL,GM} which rely on similar techniques to those used for the Mumford Shah functional \cite{AFP,AT}.

  The order of magnitude of $\delta$ has a strong impact on $m_{\alpha,\ep,\delta}$ and
  the boundary layer between the two components. Let $v_\ep^2=u_1^2+u_2^2$. Then $v_\ep^2$ tends to 1 in each component but on the boundary between the two components,  the behaviour of $v_\ep$ depends on $\delta$. More precisely,
 \begin{itemize}\item if $\delta$ tends to $\infty$, then $\inf v_\ep$ goes to 0 (see \cite{AAJRL2013}) and the
 $\Gamma$-limit of $\ep F_{\ep,\delta}$ is
 $$c {\la}$$ where  $c$ is an explicit constant corresponding to the Modica Mortola phase transition problem, and $l_\alpha$ is given by \eqref{minper}. \item if $\delta$ is of order 1, then $\inf v_\ep$ tends to some number  between 0 and 1 and the
  $\Gamma$-limit of $\ep F_{\ep,\delta}$ is
 $$c_\delta  {\la}$$ where $c_\delta>0 $ depends on $\delta$ (see \cite{GRL}).
  \item if $\delta$ tends to $1$ as $\ep \to 0$,  then the $\Gamma$-limit of $\displaystyle\frac\ep{\sqrt{\delta -1}} F_{\ep,\delta}$ is
 $$  \la/2$$    as proved in \cite{GM}, and we expect that $\inf v_\ep$ tends to 1, though a refined convergence is still missing.\end{itemize}

  When $\O$ increases from 0, we expect that the next order term in the energy will depend on the existence of vortices in the system. For a one component condensate, the rotating case is based on the work of \cite{SSbook} and has been detailed in \cite{Serf} (see also \cite{Abook,IM1,IM2}). The main features are that there exists a critical value $\O_1$ of the rotational velocity of order $\ln 1/\ep$ under which no vortices are present in the system and the energy is of order $\O^2$. For $\O\gg\O_1$, the system has a uniform density of vortices and the energy is of order $\O\log (1/\ep\sqrt\O)$. For a two component condensate in the coexistence  regime ($\delta <1$), the absence of vortices up to the first critical velocity has been proved in \cite{ANS}.

  In the segregating regime ($\delta >1$), the analysis is totally open. Nevertheless, we expect that the minimization of the energy decouples. On the one hand, there is  the minimization of the interface energy, that is the length of the perimeter of the boundary between the two regions occupied by each component. On the other hand, there is a minimization of the vortex energy in each region, similar to the case with one condensate, which may lead to a vortex structure in each region. In fact, simple calculations show that these energies have different orders of magnitude.

 When $\delta$ tends to 1, the effective length scale of  the phase  transition  and of the size of the vortex cores is $\tilde \ep= \ep/\sqrt{\delta -1}$.
 Therefore, the critical velocity for the nucleation of vortices is expected to be
 $$\O_1 =c_1 \log \frac 1 {\tilde \ep}.$$ Moreover, vortices should exist up to $\O_2=c_2/\tilde\ep^2$.

 \begin{rem} We have made the choice to include a complete square in the first term of the energy
  without subtracting the centrifugal term $\Omega^2 x^2 u^2$, which for $\Omega^2\ep^2\ll 1$ leads to the same energy expansion and vortex patterns. At $\O=1/\eps$, as explained in \cite{CPRY}, the energy without the centrifugal term displays a change of
 behaviour: the bulk of the condensate becomes annular. The two energy yield the same structures for
 rotationnal velocities much lower than $1/\eps$; in the case when $\delta$ tends to 1, velocities up to $1/\tep$ can be less than $1/\ep$ if  $\ep\ll\tep^2$.\end{rem}

 Since we are going to assume that $\delta$ tends to 1 as $\ep$ tends to 0, we remove the dependencies in $\delta$ and define for any given $\alpha\in (0,1)$ and $\ep>0$, the energy without rotation of a pair $\ri, \rj:D\to\R$ by
  \beq\label{scalar_energy}
  F_\ep(\ri, \rj):= \int_D \dm\(|\nabla\ri|^2+ |\nabla\rj|^2\) + \W(\ri,\rj),\eeq
  where
  \beq\label{vep1}
  \quad \W(\ri,\rj) = \frac1{4\ep^2} (1 - |\ri|^2)^2 + \frac1{4\ep^2} (1 - |\rj|^2)^2 + \frac{\delta}{2\ep^2}  |\ri|^2|\rj|^2 - \frac1{4\ep^2}.\eeq
  Moreover we let
 \beq\label{mae}\mae = \min\left\{F_\ep(\ri, \rj)\mid \ri,\rj\in H^1(D,\R), \dashint_D|\ri|^2 = \alpha, \dashint_D|\rj|^2 = 1 - \alpha\right\}.\eeq
 It follows from \cite{GM} that $\mae$ is of order $\sqrt{\delta-1}/\ep$ when $\delta$ tends to 1 hence  of order $1/\tep$. The  relation between $\la$ given by \eqref{minper} and $\mae$ is well-known since the work of Modica-Mortola \cite{momo} for a similar functional.
   More precisely,  $\mae\sim \ell_\alpha \mep$, where
 \beq \label{me}\mep :=  \inf_{\substack{\gamma:\R\to\R^2\\ \gamma(-\infty) = (1,0)\\ \gamma(+\infty) = (0,1)}}\int_{-\infty}^{+\infty} \frac{|\gamma'(t)|^2}{2}+ \W(\gamma(t))\,dt.\eeq
 Note that $\mep$ depends on $\ep$ and $\tep$ but is equivalent to $1/2\tep$ at  leading order as proved in \cite{GM}.

 Our main result about the energy expansion and the vortex pattern is the following:

\begin{theo}\label{thmin} Assume $D$ is a smooth bounded domain in $\R^2$ and that $\alpha\in (0,1)$. Recall that $\EOep$ is defined by \eqref{eqEnergyOmega}, where $\delta = \delta(\ep)$ and $\Omega = \Omega(\ep)$,  and assume $\tep = \ep/\sqrt{\delta-1}$ is such that $\tep\to 0,$ $\tep\ll\ep $ as $\ep\to 0$. Let $u_\ep =(\rie,\rje)$ denote a minimizer of $\EOep$ under the constraint
\beq\label{constraint} \dashint_D |\rie|^2 = \alpha,\quad\dashint_D |\rje|^2 = 1- \alpha.\eeq
Then the following behaviours hold, according to different rotation regimes:

\begin{description}
\item[A] If $\Omega/\ltep$ converges to $\beta\ge 0$ then  $\(|\rie|,|\rje|\)$ converges weakly in $BV$ to $(\chi_{\om_\alpha}, \chi_{\om_\alpha^c})$, where $\om_\alpha$ is a minimizer of $\per_D(\om)$ under the constraint $|\om| = \alpha |D|$.

Moreover, let
\beq\label{jeps}\Jie = (i\rie,\nabla\rie)-\Omega x^\perp |\rie|^2, \quad \Jje = (i\rje,\nabla\rje)-\Omega x^\perp |\rje|^2,\eeq
then  $(\Jie/\Omega,\Jje/\Omega)$ converges weakly in $L^2$ to $(\Jil, \Jjl)$, where
\beq\label{jlambda} \Jil = \argmin_{\diver j = 0} \El(j,\om_\alpha),\quad \El(j,\om_\alpha) = \dm\int_{\om_\alpha} |j|^2 +\frac1{2\beta}\int_{\om_\alpha}\left|\curl j +2\right|,\eeq
and $\Jjl$ is defined similarly, replacing $\om_\alpha$ by $\om_\alpha^c$. In the case $\beta = 0$ we have to interpret the definition of $\El(j,\om)$ as follows: it is equal to  $\|j\|_{L^2(\om)}^2$ if $\curl j +2 = 0$, and to $+\infty$ otherwise. Moreover

\beq\label{minA} \min_{\mathcal H} \EOep = \mep\la + \Omega^2 \(\min_{\diver j = 0} \El(j,\om_\alpha)+ \min_{\diver j = 0} \El(j,\om_\alpha^c)\) + o(\ltep^2).\eeq

\item[B] If $\ltep\ll \Omega$ and $\Omega \log\(1/\tep\sqrt\Omega\)\ll 1/\tep$ then $\(|\rie|,|\rje|\)$ still converges as above to $(\chi_{\om_\alpha}, \chi_{\om_\alpha^c})$ and, defining $\Jie$, $\Jje$ as in \eqref{jeps}, both $\Jie/\Omega$ and $\Jje/\Omega$ converge weakly to $0$ in $L^2$. Moreover
\beq\label{minB} \min_{\mathcal H} \EOep = \mep\la + \dm |D| \Omega \log\(\frac1{\tep\sqrt\Omega}\) \(1+o(1)\).\eeq
\item[C] If $1/\tep \ll \Omega \log\(1/\tep\sqrt\Omega\)\ll 1/\tep^2$ then
\beq\label{minC} \min_{\mathcal H} \EOep = \dm |D| \Omega \log\(\frac1{\tep\sqrt\Omega}\)  \(1+o(1)\).\eeq
\end{description}
\end{theo}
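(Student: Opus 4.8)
The plan is to obtain, in each of the three regimes, matching upper and lower bounds for $\min_{\mathcal H}\EOep$ together with the stated compactness, by decoupling the \emph{density} part of the energy (which produces the perimeter term $\mep\la$) from the \emph{current} part (which produces the rotational cost). Away from the zeros of $u_{k,\ep}$ write $u_{k,\ep}=\rho_{k,\ep}e^{i\varphi_{k,\ep}}$, so that $|\nabla u_{1,\ep}-i\Omega x^\perp u_{1,\ep}|^2=|\nabla\rho_{1,\ep}|^2+|\Jie|^2/\rho_{1,\ep}^2$ with $\Jie$ as in \eqref{jeps}, and likewise for the second component. Since the potential depends only on the moduli, this gives the exact splitting
\beq\label{plan-split}
\EOep(\rie,\rje)=F_\ep(\rho_{1,\ep},\rho_{2,\ep})+\dm\int_D\frac{|\Jie|^2}{|\rie|^2}+\dm\int_D\frac{|\Jje|^2}{|\rje|^2},
\eeq
where $F_\ep$ is the rotation-free energy \eqref{scalar_energy}. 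Because $(\rho_{1,\ep},\rho_{2,\ep})$ satisfies the mass constraint, the first term is at least $\mae=\mep\la(1+o(1))$, while the last two, in which $|\Jie|^2/\rho_{1,\ep}^2=\rho_{1,\ep}^2|\nabla\varphi_{1,\ep}-\Omega x^\perp|^2$ concentrates where $\rho_{1,\ep}\simeq1$, carry the vortex energy of each component in its own region.

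For the lower bounds I would treat the two parts separately and then show they add. For the density part, the Modica--Mortola/$\Gamma$-convergence analysis of \cite{GM} gives $F_\ep\ge\mep\la(1+o(1))$ and, from a uniform energy bound, compactness of $(|\rie|,|\rje|)$ in $BV$ with any limit a pair $(\chi_\omega,\chi_{\omega^c})$ minimizing $\per_D$ under $|\omega|=\alpha|D|$; this yields at once the perimeter term and the claimed $BV$ convergence in regimes A and B. For the current part I would use the Ginzburg--Landau vortex-ball construction and lower bounds of Sandier--Serfaty \cite{SSbook} with effective core scale $\tep$. In regime A, after dividing by $\Omega^2$, the fields $\Jie/\Omega,\Jje/\Omega$ are bounded in $L^2$ with divergence-free limits, and the rescaled vorticities $\curl(\Jie/\Omega)+2$ converge to nonnegative measures $\mu$; a $\Gamma$-liminf inequality then produces $\liminf\Omega^{-2}(\text{current energy})\ge\El(j,\oma)+\El(j,\oma^c)$, the weight $\tfrac1{2\beta}$ appearing because one unit of vorticity costs $\pi\ltep$ and $\Omega=\beta\ltep(1+o(1))$. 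In regimes B and C the same vortex estimates, with inter-vortex distance $\Omega^{-1/2}$ now entering the logarithm, give the leading order $\dm|D|\,\Omega\log(1/\tep\sqrt\Omega)$; in regime B this is of lower order than $\Omega^2$, which forces $\Jie/\Omega,\Jje/\Omega\to0$ in $L^2$.

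For the matching upper bounds I would construct explicit test pairs. In regime A, take $\roie,\roje$ equal to the optimal one-dimensional transition profile of \eqref{me} laid across a smooth approximation of $\partial\oma$ (so that $F_\ep=\mep\la(1+o(1))$), and choose the phases so that $\Jie/\Omega$ approximates a minimizer $\Jil$ of $\El(\cdot,\oma)$: for $\beta=0$ a smooth phase with $\curl j+2=0$ and no vortices, for $\beta>0$ a near-optimal quantized vortex density realizing $\mu$. In regimes B and C I would superimpose on the same profile a uniform lattice of degree-one vortices of density of order $\Omega$ in each component, with cores of size $\tep$ and cells of size $\Omega^{-1/2}$; a direct energy computation matches the lower bound $\dm|D|\,\Omega\log(1/\tep\sqrt\Omega)$, the perimeter contribution $\mep\la$ being retained in B and negligible in C. Together with the strict convexity of the limit current functionals (ensuring uniqueness of the minimizers and hence convergence, not merely subconvergence, of $\Jie/\Omega,\Jje/\Omega$), these bounds give \eqref{minA}--\eqref{minC}.

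The main obstacle I anticipate is the decoupling at the precision required ($o(\ltep^2)$ in A, a $(1+o(1))$ factor in B and C), made delicate by the coincidence of scales: the density interface and the vortex cores both live at scale $\tep$, so in \eqref{plan-split} the term $|\Jie|^2/|\rie|^2$ must be controlled precisely where $\rho_{1,\ep}$ is small. One must show that the thin interface layer contributes negligibly to the current energy and that vortices are expelled from it into the bulk of each component, which calls for quantitative control of $\rho_{k,\ep}$ near and away from $\partial\oma$ --- exactly the ``new estimate of a Modica--Mortola type energy away from the interface'' announced in the abstract --- and for adapting the Sandier--Serfaty lower bounds to the a priori unknown, merely $BV$-regular domain $\oma$ rather than a fixed smooth one.
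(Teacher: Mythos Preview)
Your proposal is correct and follows essentially the same route as the paper: the same exact splitting \eqref{plan-split} (the paper's \eqref{decomp}--\eqref{split}), the same upper-bound constructions (1D transition profile plus prescribed vortex configurations, with a periodic lattice in regimes~B--C), and the same Sandier--Serfaty vortex-ball lower bounds at effective scale $\tep$, all hinged on the localization Theorem~\ref{enloc} you correctly identify as the crux.

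One clarification on the anticipated obstacle: the paper does \emph{not} need to show that the interface layer carries negligible current energy or that vortices are expelled from it. Since $G_\ep\ge 0$, one simply discards the current contribution on an $\eta$-neighbourhood $V_\eta$ of $\gamma_\alpha$; Theorem~\ref{enloc} then ensures $F_\ep(\rho_{1,\ep},\rho_{2,\ep},D\setminus V_\eta)\le C(\Delta_\ep+\ltep)$, so on $\oma\setminus V_\eta$ and $\oma^c\setminus V_\eta$ the moduli are close to $0$ or $1$ outside small balls and the standard vortex analysis applies; one lets $\eta\to 0$ at the end. Also, in regimes~B and~C the paper does not apply the vortex bounds directly but first rescales by a factor $\lambda_\ep$ so that the rescaled rotation satisfies $\Omega'=M\log\bigl(1/\tep'\sqrt{\Omega'}\bigr)$, proves a local lower bound on unit balls (Lemma~\ref{lowresc}), and then averages over translates by Fubini.
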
 In cases {\bf A} and {\bf B}, the leading order term is the interface energy $\mae$ which is of order $1/\tilde{\ep}$. This leads to two droplets having individual vortices. This interface term stays dominant until $\Omega \log\(1/\tep\sqrt\Omega\)$ reaches $1/\tep$.  For high rotations, we do not know if  the interface still minimizes the perimeter, but we believe that the interface is allowed to increase its length to reach the sheet pattern.
Note that the hypothesis $\ep^2\ll \tep$ guarantees that $\Omega$ must be less than $1/\ep$.

The proof of the above theorem builds upon the analysis of Ginzburg-Landau vortices in the presence of a magnetic field (see \cite{ss1,ss2,js1,js2} or the book \cite{SSbook}). The problem here is to factor out the energy of the  interface between the set $\om_\alpha$ where $|\rie|\simeq 1, |\rje|\simeq 0$  and the set $\om_\alpha^c$ where $|\rie|\simeq 0, |\rje|\simeq 1$. In cases A and B of the Theorem, this interface energy is dominant hence it is difficult to separate it from the vortex energy which is computed separately in each domain $\om_\alpha$, $\om_\alpha^c$. Note that we cannot separate this leading-order energy by a splitting argument as in the Ginzburg-Landau case or using the division trick introduced in \cite{LaMi} and used since in different contexts (see \cite{andreshafrir,IM1,kachmar} for instance) because of the segregation pattern: one component has an almost zero density.

We rely instead on the fact that the interface energy is due to the modulus of $\rie$ and $\rje$, while the vortex energy is due to the phase. The argument requires nevertheless to precisely locate the interface energy  and estimate  the rest of the energy away from the interface, as we will see in Theorem~\ref{enloc} below. This is a result which to our knowledge is new even in the case of the Modica-Mortola functional. A more precise lower bound  was proved by G.Leoni and R.Murray \cite{LM} but without locating the energy.

 \begin{theo} \label{enloc} Let $D$ be a bounded smooth domain in $\R^2$ and $\alpha\in(0,1)$. Assume  $\delta = \delta(\ep)$ is such that $\delta$ tends to $1$ and $\te := \frac\ep{\sqrt{\delta - 1}}$ tends to $0$,  as $\ep\to 0$. Denote by $\{\ep\}$  a sequence of real numbers tending to $0$.

Let $\{(\rie, \rje)\}_\ep$ be such that
\beq\label{hyptheo} F_\ep(\rie, \rje)\le \mep\la + \Dep, \quad \rie^2+\rje^2  \le 1+ C\tep, \eeq
where $\mep$ is given by \eqref{me} and $\la$ is given by \eqref{minper}, with $\Delta_\ep \ll \mep\la$ as $\ep\to 0$. Then there exists a subsequence $\{\epp\}$ such that $\{(\riep, \rjep)\}_\epp$ converges to $(\chi_{\om_\alpha}, \chi_{\om_\alpha^c})$, where $\om_\alpha$ is a minimizer of \eqref{minper}.

Moreover writing  $\gamma_\alpha = \partial\om_\alpha\cap D$, for any $\eta>0$ there exists $C>0$ such that if $\epp$ is small enough (depending on $\eta$), for any $V_\eta$ which is  an $\eta$-neighbourhood of $\gamma_\alpha$ we have
\beq\label{uppb} \mep\la - C(\Dep +\ltep) \le F_\ep(\riep, \rjep,V_\eta),\quad F_\ep(\riep, \rjep,D\setminus V_\eta) \le C (\Dep +\ltep).\eeq
\end{theo}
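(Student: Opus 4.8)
The plan is to obtain compactness and identify the limit first, and then to prove the sharp \emph{localized} lower bound, from which the complementary upper bound follows by subtraction. For the compactness, I would use the hypothesis $F_\ep(\rie,\rje)\le\mep\la+\Dep$ with $\Dep\ll\mep\la$: since $\tep\mep\to\tfrac12$ and $\tep\Dep=(\tep\mep\la)\,(\Dep/\mep\la)\to0$, one gets $\tep F_\ep\to\tfrac12\la$, which is exactly the value of the $\Gamma$-limit established in \cite{GM}. The coercivity of $\W$, which forces $(\rie^2,\rje^2)$ towards the two wells $(1,0)$ and $(0,1)$, together with the gradient term gives a uniform $BV$ bound on the phase indicator, hence $L^1$ compactness of $(\rie^2,\rje^2)$ along a subsequence $\{\epp\}$; the constraint \eqref{constraint} fixes the volume fraction and the near-optimality of the energy forces the limiting partition to minimize the perimeter, yielding convergence to $(\chi_{\oma},\chi_{\oma^c})$ with $\oma$ a minimizer of \eqref{minper}.

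For the localized estimate, the main tool is a geodesic reformulation. I would equip the target $\R^2$ with the degenerate metric $\sqrt{2\W}\,|dp|$ and let $\Phi_\ep(p)$ be the associated geodesic distance from the well $(1,0)$, so that by the definition of $\mep$ in \eqref{me} one has $\Phi_\ep(1,0)=0$ and $\Phi_\ep(0,1)=\mep$. Setting $w=\Phi_\ep(\riep,\rjep)$, the chain rule together with Young's inequality yields the pointwise bound $\tfrac12|\nabla u|^2+\W(u)\ge\sqrt{2\W(u)}\,|\nabla u|\ge|\nabla w|$, so that for any open $A\subset D$ the coarea formula gives $F_\ep(\riep,\rjep,A)\ge\int_A|\nabla w|=\int_0^{\mep}\per(\{w>t\};A)\,dt$, where $\per(\,\cdot\,;A)$ denotes relative perimeter in $A$; the hypothesis $\riep^2+\rjep^2\le1+C\tep$ is used here to keep $u$ in the region where $\Phi_\ep$ is comparable to the model distance. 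The key geometric observation is that $u$ is pinned at one of the two wells outside a transition layer of normal width $O(\tep)$, so each level set $\{w>t\}$, $t\in(0,\mep)$, has volume within $O(\tep)$ of $|\oma^c|$; since the constrained minimal perimeter $V\mapsto\min\{\per_D(E):|E|=V\}$ is Lipschitz in $V$ near $|\oma^c|$, this gives $\per_D(\{w>t\})\ge\la-C\tep$, whence $\int_0^{\mep}\per_D(\{w>t\})\,dt\ge\mep\la-C\mep\tep=\mep\la-O(1)$.

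To localize this inside $V_\eta$ I would glue, for each $t$, the level set $\{w>t\}$ restricted to $V_\eta$ to $\oma^c$ restricted to $D\setminus V_\eta$. Since $\gaa\subset V_\eta$, the set $\oma^c$ carries no relative perimeter in $D\setminus\overline{V_\eta}$, and since $u$ is close to a well on $\partial V_\eta$ the glued set is an admissible competitor of volume $|\oma^c|+O(\tep)$; minimality then gives $\per(\{w>t\};V_\eta)\ge\la-C\tep-(\text{flux of }w\text{ through }\partial V_\eta)$. Choosing the neighbourhood radius by a mean-value argument over the $\sim\ltep$ dyadic scales between $\tep$ and $\eta$ selects a boundary on which the accumulated flux is controlled, and summing these scale-by-scale contributions is where the logarithmic error $\ltep$ enters. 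Integrating over $t\in(0,\mep)$ yields $F_\ep(\riep,\rjep,V_\eta)\ge\mep\la-C(\Dep+\ltep)$, and the complementary bound is then immediate by additivity of the energy: $F_\ep(\riep,\rjep,D\setminus V_\eta)=F_\ep(\riep,\rjep,D)-F_\ep(\riep,\rjep,V_\eta)\le(\mep\la+\Dep)-(\mep\la-C(\Dep+\ltep))\le C'(\Dep+\ltep)$.

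The main obstacle is the sharp localization without losing the leading constant $\mep\la$. The classical Modica--Mortola lower bound controls only the $\Gamma$-liminf and does not see a neighbourhood, while a naive Fermi-coordinate slicing across a curved interface loses a relative factor $O(\eta)$, i.e. an error of order $\eta/\tep$, which is far larger than the allowed $\ltep$; this is precisely what the coarea/geodesic-distance approach avoids, since it carries no Jacobian loss. The crux is then the two quantitative inputs replacing their qualitative $\Gamma$-convergence analogues: a uniform-in-$t$ estimate on the measure of the transition layer (to keep every level set at volume $|\oma^c|+O(\tep)$), and the control of the flux of $w$ across a well-chosen $\partial V_\eta$, where the $\ltep$ is produced. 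It is exactly this ability to pin the energy to an arbitrary fixed neighbourhood of $\gaa$, with only a logarithmic error, that is new even for the scalar Modica--Mortola functional.
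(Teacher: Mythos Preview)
Your high-level strategy --- geodesic distance $\Phi_\ep$ to a well, the pointwise Young inequality, and the coarea formula to reduce to perimeters of level sets --- is exactly the paper's starting point (its function $d_\ep$ in Section~4.2--4.3). But two of your intermediate steps are where the actual work lies, and you have glossed over both.

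First, the sentence ``$u$ is pinned at one of the two wells outside a transition layer of normal width $O(\tep)$, so each level set $\{w>t\}$ has volume within $O(\tep)$ of $|\oma^c|$'' is the crux, and it is \emph{not} a geometric observation but a hard lemma. The paper proves it as Lemma~\ref{lemmarea}, and the result only holds for $t$ in a central window $[C\ltep,\mep-C\ltep]$, with error $C\tep(\ltep+\Dep)$ rather than $O(\tep)$. The proof is delicate: it feeds the coarea formula back into itself, writing the transition-layer area as $\int a(t)\,dt$ with $a(t)$ controlled by $(\tep/t)\,|\gamma_t|\,v(t)$ via Lemma~\ref{Wd}, then squeezes $|\gamma_t|v(t)$ using the energy upper bound and the Lipschitz isoperimetric profile. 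The $\ltep$ you are after comes precisely from the integral $\int_{C\ltep}^{\mep} \tep/t\,dt$ in this estimate, not from a dyadic mean-value argument on $\partial V_\eta$ as you suggest. For $t$ near $0$ or $\mep$ there is no control on $|\om_t|$ at all, and excluding those ranges costs another $C\ltep$ in the integral over $t$.

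Second, your localization mechanism (glue $\{w>t\}\cap V_\eta$ to $\oma^c\cap(D\setminus V_\eta)$ and invoke minimality) presupposes that $u$ is near a well on some $\partial V_r$, which is essentially what you are trying to prove. The paper avoids this circularity by a different route: once \eqref{thstep1} gives $\int_{I_\ep}|\per_D(\om_t)-\la|\,dt\le C(\Dep+\ltep)$, it invokes a \emph{perimeter-localization} proposition (Proposition~\ref{locper}) saying that a set of the right volume with near-minimal perimeter must carry almost all its perimeter in a neighbourhood of the minimal interface, with deficit controlled linearly by the perimeter excess. Combined with a stability lemma (Lemma~\ref{diffsym}) pinning the comparison minimizer near $\oma$, this gives $\per_{V_\eta^c}(\om_t)\le C(\per_D(\om_t)-\ell_{\alpha(t)})$ for most $t$, and integrating in $t$ yields the localized lower bound directly. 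Your gluing idea might be salvageable, but as written the ``flux through $\partial V_\eta$'' argument is too vague to be a proof, and in any case the $\ltep$ does not originate there.
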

The hypothesis $\rie^2+\rje^2  \le 1+C\tep$  is satisfied (see Proposition~\ref{proplinfinitybound} below) for minimizers of $\EOep$ if $\Omega$ is not too large, as in cases~A and~B of Theorem~\ref{thmin}. In case~C it does not apply, but in this case the leading order of the energy does not allow to locate the interface anyway.

Theorem~\ref{enloc} means that the energy is concentrated close to the interface up to an error of order $\ltep$. The proof will follow from  a similar concentration of perimeter for problem \eqref{minper} and by estimating $\mae$ in terms of perimeters of level-sets of a certain function, as in P.Sternberg's \cite{sternberg} generalization of the method of Modica-Mortola \cite{momo}.

The proof of Theorem \ref{thmin} relies on precise upper bounds and lower bounds. The upper bound consists in building a test function whose modulus approaches the interface problem and whose phase reproduces the expected pattern for vortices depending on the values of $\Omega$. One difficulty is that we have to keep the mass constraint satisfied and $|u_1|^2+|u_2|^2$ close to $1$. An important tool is the uniform exponential decay when $\delta$ tends to 1, proved for the 1D problem in \cite{sourdis}. Let us point out that we have chosen the limit $\delta\to 1$ because it is only in this case of weak separation that the sheets exist. In the case where $\delta$ is fixed the interface problem leads to two domains having their own vortices and the proof can be adapted from what we have done.

\hfill

When $\Omega$ is of the order of $1/\tep^2$, assuming $1/\tep^2\ll 1/\ep$, we are no longer able to determine the leading order of the minimal energy. However a plausible minimizer exists, neglecting boundary effects, which depends on one variable only and exhibits a stripe pattern. The construction yields the following

\begin{theo} \label{upstripes}Assume that $\Omega = \lambda/\tep^2$ and that $\ep\ll \tep^2$, then
\beq\label{big} \min_{\mathcal H} \EOep \le \Omega |D| \emin (\alpha,\lambda),\eeq
where
\beq\label{emin} \emin(\lambda,\alpha) = \min_{\mu>0} \min_{\theta\in X_\alpha}
\left \{\(\frac 16+\frac\alpha 2 - 4 \int_0^{1/2}x\sin^2\theta\) \mu^2+\frac 1 {\mu^2} \int_0^{1/2}{\theta '}^2+\frac 1{4 \lambda} \int_0^{1/2} \sin^2 2\theta\right\},\eeq
and $X_\alpha$ is the set of $\theta$ in $H^1(0,1/2)$ such that
\beq\label{X} \dashint_0^{1/2} {\sin}^2 \theta = \alpha,\quad \theta(0) = 0,\quad \theta(1/2) = \pi/2.\eeq
\end{theo}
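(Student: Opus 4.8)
The plan is to establish the upper bound in \eqref{big} by exhibiting an explicit one-dimensional test function and computing its energy. Since the target is an inequality, I only need to build a competitor in $\mathcal H$ and evaluate $\EOep$ on it, then optimize over the available parameters. The ansatz is suggested by the stripe pattern: I would look for $(\rie,\rje)$ whose moduli depend on a single variable (say $x_1$) and are periodic, with a period comparable to $\tep$, so that the interface energy per unit length is captured by the Modica--Mortola cost $\mep$ while the phases carry the rotation. Writing the density profile through an angle $\theta$ so that $|\rie| = \rho\cos\theta$ and $|\rje| = \rho\sin\theta$ (or a suitable rescaled version), the constraint $|\rie|^2+|\rje|^2\approx 1$ becomes $\rho\approx 1$, the mass constraint $\dashint_D|\rie|^2 = \alpha$ becomes the averaged constraint $\dashint_0^{1/2}\sin^2\theta = \alpha$ appearing in \eqref{X}, and the boundary conditions $\theta(0)=0$, $\theta(1/2)=\pi/2$ encode that each component is pure at the extremes of a half-period.

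Next I would plug this ansatz into $\EOep$ and separate the energy into its constituent pieces. The gradient-of-modulus term produces, after the change of variables rescaling the transverse coordinate by $\mu$ (the free stretching parameter to be optimized, which sets the stripe width in units of $\tep$), the term $\frac1{\mu^2}\int_0^{1/2}{\theta'}^2$; the potential $\W$ contributes the $\mu^2$-weighted pieces together with the segregation penalty $\frac{\delta}{2\ep^2}|\rie|^2|\rje|^2$, which after rescaling yields the $\frac1{4\lambda}\int_0^{1/2}\sin^2 2\theta$ term since $\Omega = \lambda/\tep^2$; and the rotational part $|\nabla u_j - i\Omega x^\perp u_j|^2$, with the phase chosen to cancel as much of $\Omega x^\perp$ as possible, produces the $\mu^2$-weighted coefficient $\frac16 + \frac\alpha2 - 4\int_0^{1/2}x\sin^2\theta$. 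The constant $\frac16$ and the moment integral arise from integrating $\Omega^2|x^\perp|^2|u_j|^2$ across the stripes, i.e. from the kinetic cost of the density modulation interacting with the rotation. Collecting terms and factoring out $\Omega|D|$ gives exactly the bracketed expression in \eqref{emin}, after which minimizing over $\mu>0$ and $\theta\in X_\alpha$ delivers $\emin(\lambda,\alpha)$.

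The step I expect to be the main obstacle is the construction of a genuine test function from the one-dimensional profile that (i) lies in $\mathcal H$ exactly, respecting both mass constraints, (ii) keeps $|\rie|^2+|\rje|^2$ close to $1$ as required, and (iii) handles the boundary of $D$, since a strictly periodic stripe pattern will not match $\partial D$ and the phase cannot be globally single-valued when it must wind against $\Omega x^\perp$. The phrasing ``neglecting boundary effects'' signals that these are treated as lower-order corrections: I would build the profile on the bulk, patch it near $\partial D$ on a set whose measure is small relative to $|D|$, and absorb the mismatch in the mass constraint by a small uniform rescaling, checking that each correction contributes $o(\Omega|D|)$. Verifying that the phase can be chosen so that the rotational term reduces precisely to the claimed moment integral---this requires choosing the phase gradient to best approximate the incompressible field $\Omega x^\perp$ on the stripe geometry---is the delicate bookkeeping that underlies the coefficient $\frac16 + \frac\alpha2 - 4\int_0^{1/2}x\sin^2\theta$, and getting that constant right is where the argument earns its keep.
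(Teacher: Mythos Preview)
Your overall strategy matches the paper's: a one–dimensional periodic stripe ansatz with $|\rie|^2+|\rje|^2=1$, parametrized by an angle $\theta$ and a free stretching parameter $\mu$ setting the stripe width, then optimize. However, the step you flag as ``delicate bookkeeping'' is in fact the one missing idea, and your description of it (``integrating $\Omega^2|x^\perp|^2|u_j|^2$'', or ``phase chosen to cancel as much of $\Omega x^\perp$ as possible'') does not capture the mechanism.

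The paper's phase is a staircase, not a near-cancellation. In the rescaled unit cell one writes $j_k=\nabla\varphi_k-\mu^2 x^\perp$; then $\curl j_k=-2\mu^2$ together with $x$-dependence only forces $\partial_x j_{k,y}=2\mu^2$, while $\diver(\rho_k^2 j_k)=0$ gives $\rho_k^2 j_{k,x}$ constant, hence $j_{k,x}=0$. So $j_{k,y}$ is piecewise linear in $x$ with slope $2\mu^2$, and since $\varphi_k$ only needs to be defined where $\rho_k>0$, the integration constant may jump precisely at the zeros of $\rho_k$. Placing these jumps at $x=0,1$ for component $1$ and at $x=1/2$ for component $2$ gives $j_{1,y}=2\mu^2(x-\tfrac12)$ on $[0,1)$ and $j_{2,y}=2\mu^2 x$ on $(0,\tfrac12)$, $j_{2,y}=2\mu^2(x-1)$ on $(\tfrac12,1)$. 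The coefficient $\tfrac16+\tfrac\alpha2-4\int_0^{1/2}x\sin^2\theta$ then drops out of the explicit integral $\tfrac12\sum_k\int_0^1\rho_k^2 j_{k,y}^2$. Note also that the potential $\W$, with $\rho_1^2+\rho_2^2=1$ enforced, contributes \emph{only} the segregation term $\tfrac1{4\lambda}\int\sin^2 2\theta$; the $\mu^2$-weighted pieces come entirely from this phase current, contrary to what one of your sentences suggests. Finally, with your labeling $|\rie|=\rho\cos\theta$ the mass constraint would read $\dashint\cos^2\theta=\alpha$; the paper takes $\rho_1=\sin\theta$.
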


\begin{rem}

 If $\lambda$ is small,  the $\theta$ energy is  of Modica Mortola type and $\theta$ varies quickly from 0 to $\pi/2$ on a scale $\sqrt\lambda/\mu$. In this case $\sin\theta = 0$ except on the transition interval therefore the term $\int_0^{1/2}x\sin^2\theta$ can be neglected in front of the constant terms. Optimizing with respect to $\theta$ yields, to first order as $\lambda\to 0$, $\mu^2(1/6+\alpha/2) + c_0/(\mu\sqrt\lambda)$. Optimizing with respect to $\mu$ then yields  that $\emin(\alpha,\lambda)$  is of order $1/\lambda^{1/3}$. Note however that in this regime of small $\lambda$, Theorem~\ref{thmin}, case~C shows that this upper-bound is not optimal.
 \end{rem} An alternative direction of construction of upper bounds could be the framework developped by \cite{wei} for two species polymers.

Still in this regime, one thing we are able to say about minimizers $(\rie,\rje)$ is that on most disks of radius  $R\tep$, both $\rie$ and $\rje$ are present. More precisely,

\begin{theo}\label{theolong} Assume that $\Omega = \lambda/\tep^2$ and that $\ep\ll \tep^2$, then
 for all $\eta>0$, there exists a $\beta >0$, $R_0>0$, such that  for $R>R_0$, and
 for all $\eps$ sufficiently small, if $(u_1,u_2)$ is a minimizer of $\EOep$ in ${\mathcal H}$, then
   \beq\label{estl2norm} |\{ x \hbox{ s.t. } \dashint_{D(x,R,\tep)} |u_1|^2 < \beta \hbox{ or }
  \dashint_{D(x,R\tep)} |u_2|^2 < \beta \} |<\eta\eeq where $D(x,R\tep)$ is the circle of center $x$ and radius $R\tep$.
\end{theo}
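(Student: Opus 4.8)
The plan is to exploit the separation of scales $\ep\ll\tep$: a region occupied by a single component must carry the rotation through \emph{empty} vortices whose cores have size $\ep$, at a cost per unit vorticity of order $\log(1/\ep\sqrt\Omega)\sim\log(\tep/\ep)\to\infty$, whereas the competing stripe pattern of Theorem~\ref{upstripes} costs only $\Omega|D|\emin(\lambda,\alpha)$ with $\emin$ \emph{bounded} as $\ep\to0$. Thus a mesoscopic single-component ball is asymptotically infinitely more expensive than the whole energy budget, and a minimizer cannot afford many of them. Concretely, I would argue by contradiction: if the conclusion fails, then for some $\eta>0$ there are minimizers along $\ep\to0$ for which the set in \eqref{estl2norm} has measure $\ge\eta$, and after relabelling I may assume $A:=\{x:\dashint_{D(x,R\tep)}|u_2|^2<\beta\}$ has $|A|\ge\eta/2$. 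Since $W_{\ep,\delta}\ge0$, the bound $\min_{\mathcal H}\EOep\le\Omega|D|\emin$ controls $\frac1{4\ep^2}\int_D(1-|u_1|^2-|u_2|^2)^2$, and as $\ep\ll\tep^2$ forces $\lambda(\ep/\tep)^2\to0$ one gets $|u_1|^2+|u_2|^2\to1$ in $L^2(D)$; hence on most balls centred in $A$ one has $\dashint_{D(x,R\tep)}|u_1|^2\ge1-\beta-o(1)$, so that $u_1$ is present at density close to one and must absorb the full rotation there essentially alone.

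The core of the argument is a lower bound on a single depleted ball. I would fix such a ball $B=D(x,R\tep)$, where $|u_1|\simeq1$ and $\int_B|u_2|^2<\beta\,|B|$. The rotation imposes on $u_1$ a total vorticity $\simeq\Omega|B|/\pi$, i.e.\ about $\lambda R^2$ vortices ($R$ being chosen large so that this count is meaningful and boundary effects negligible). Each vortex core of $u_1$ may be \emph{filled} by $u_2$, lowering its cost from the $\ep$-scale $\sim\pi\log(1/\ep\sqrt\Omega)$ to the $\tep$-scale; but a filled core consumes an amount $\gtrsim\tep^2$ of $u_2$-mass, so the number of filled cores is at most $\lesssim\beta|B|/\tep^2=c\,\beta R^2$, a fraction $\lesssim\beta/\lambda$ of the total. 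Choosing $\beta$ small depending on $\lambda$, a definite fraction of the vortices must remain empty, and the single-component vortex-ball lower bounds of \cite{ss1,ss2,js1,js2,SSbook,Serf} give
\beq \int_B\Big(\dm\,|\nabla u_1-i\Omega x^\perp u_1|^2+W_{\ep,\delta}(|u_1|^2,|u_2|^2)\Big)\ \ge\ c\,\Omega\,|B|\,\log\frac{\tep}{\ep}\,\big(1-o(1)\big),\eeq
with $c=c(\lambda)>0$, the divergence $\log(\tep/\ep)$ surviving precisely because the empty cores cannot be avoided.

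The conclusion then follows by summation. I would extract, by a Vitali covering, disjoint depleted balls $B_i=D(x_i,R\tep)$, $x_i\in A$, with $\sum_i|B_i|\ge c_0|A|\ge c_0\eta/2$. Adding the per-ball bounds and discarding the nonnegative energy elsewhere,
\beq \min_{\mathcal H}\EOep\ \ge\ \sum_i\int_{B_i}(\cdots)\ \ge\ c\,\Omega\,\Big(\sum_i|B_i|\Big)\log\frac{\tep}{\ep}\ \ge\ c_1\,\eta\,\Omega\,\log\frac{\tep}{\ep}.\eeq
Comparing with $\min_{\mathcal H}\EOep\le\Omega|D|\emin(\lambda,\alpha)$ and dividing by $\Omega$ yields $c_1\eta\log(\tep/\ep)\le|D|\emin$, violated for $\ep$ small since $\log(\tep/\ep)\to\infty$ while $\emin$ stays fixed. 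This contradiction gives $|A|<\eta$; exchanging the roles of $u_1$ and $u_2$ bounds the other half of the set in \eqref{estl2norm}, so \eqref{estl2norm} holds with $\beta=\beta(\lambda)$ and $R$ large.

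The main obstacle is the per-ball lower bound of the second step, which must be made rigorous and uniform: one needs a lower bound on the number of vortices of $u_1$ forced by the rotation (through the Jacobian/degree), together with a quantitative accounting showing that the scarce component $u_2$, of mass $<\beta|B|$, can screen only a bounded fraction of those cores, so that a positive fraction retain the divergent $\log(1/\ep\sqrt\Omega)$ cost. This is a genuinely two-component refinement of the Ginzburg--Landau vortex-ball construction, in which the effective core size jumps from $\ep$ (empty) to $\tep$ (filled), and it is the scale separation $\ep\ll\tep$, guaranteed by the hypothesis $\ep\ll\tep^2$, that produces the decisive factor $\log(\tep/\ep)\to\infty$. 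Two further technical points require care: deriving the density bound $|u_1|^2+|u_2|^2\to1$ from the energy alone, since the strong $L^\infty$ estimate of Proposition~\ref{proplinfinitybound} is unavailable in this regime; and controlling the interactions between neighbouring balls so that the per-ball errors sum to $o(\eta\,\Omega\log(\tep/\ep))$.
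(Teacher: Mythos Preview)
Your approach differs substantially from the paper's, and the step you flag as the ``main obstacle'' is a genuine gap that the paper avoids altogether.

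The paper does not attempt any two-component vortex-ball analysis. It argues as follows. By Theorem~\ref{upstripes} and a Fubini/Markov averaging, for any $\eta>0$ there is $C>0$ such that the set of $x$ with $\EOep(K(x,R\tep))>CR^2$ has measure $<\eta$. On any remaining (``good'') square the claim is that \emph{both} averaged densities exceed some threshold $\alpha=\alpha(C,\lambda)$. If not, say $\dashint|u_1|^2<\alpha$, then the energy bound gives $\dashint(1-|u_1|^2-|u_2|^2)^2<C\ep^2/\tep^2\to0$, hence $\dashint|1-|u_2|^2|<2\alpha$. Rescaling the square to unit size, the rotation becomes $\tilde\Omega=R^2\lambda$ and $\tilde u_2$ satisfies a bound of the form $\int_K|\nabla\tilde u_2-i\tilde\Omega y^\perp\tilde u_2|^2+\tfrac1\alpha\int_K|1-|\tilde u_2|^2|<C$. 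The paper then invokes a \emph{one-component} Ginzburg--Landau lower bound for $\tilde u_2$ alone, with $\sqrt\alpha$ acting as the effective coherence length, to get a lower bound $\ge c\tilde\Omega=cR^2\lambda$. Choosing $R$ large and $\alpha$ small yields a contradiction on that single square; a direct compactness alternative is also indicated.

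The conceptual contrast is sharp: the paper never distinguishes ``empty'' $\ep$-cores from ``filled'' $\tep$-cores, never uses the divergence $\log(\tep/\ep)\to\infty$, and never sums over a Vitali family of balls. It works on one square at a time and treats only the dominant component, with the scarcity of the minority component entering solely through the integral bound $\int_K|1-|\tilde u_2|^2|<2\alpha$. This reduces matters to standard single-component theory.

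Your mechanism---a mass budget for $u_2$ limiting how many vortices of $u_1$ can be filled, leaving a positive fraction of expensive empty cores---is physically plausible and would, if established, give a finer quantitative picture. But the per-ball lower bound you require is a genuine extension of the vortex-ball machinery to the coupled setting, and you have not proved it; you yourself identify it as the crux. The paper's route shows this extension is unnecessary for Theorem~\ref{theolong}: a single-component lower bound on the dominant species suffices.
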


The paper is organized as follows. In section 2, we prove estimates that will be useful all along the proofs, namely an $L^\infty$ estimate, estimates for the corresponding 1D problem and relations between the minimum for the 2D and 1D problems. Section 3 is devoted to the proof of Theorem \ref{thmin} assuming that Theorem \ref{enloc} holds: upper bounds and lower bounds are built carefully leading eventually to the required energy estimates. In Section 4, we introduce the perimeter related properties that allows us to eventually prove Theorem \ref{enloc}. The last section deals with the sheets case leading to the proofs of Theorems \ref{upstripes} and \ref{theolong}.

\section{A priori estimates}
A minimizer of \eqref{eqEnergyOmega} in $\mathcal{H}$ given by \eqref{H_space_definition} is a solution of the following system, \begin{subequations}\label{eq:mainu1u2}
\begin{align} \label{eq:mainu1u2a}
&- \Delta u_1-2 i\Omega x^\perp\cdot \nabla u_1 +\frac 1 {\ep^2}u_1(|u_1|^2+|u_{2}|^2-1+\ep^2\Omega^2 |x|^2)+\frac {(\delta-1)} {\ep^2}|u_2|^2 u_1=\lambda_{1}u_1, \\
 \label{eq:mainu1u2b}
&- \Delta
u_2-2 i\Omega x^\perp\cdot \nabla u_2 +\frac 1 {\ep^2}u_2(|u_1|^2+|u_{2}|^2-1+\ep^2\Omega^2 |x|^2)+\frac {(\delta-1)} {\ep^2}|u_1|^2 u_2 =\lambda_{2}u_2,
\end{align}
\end{subequations}where $\lambda_j$'s are the Lagrange multipliers due to the $L^2$ constraint.

\subsection{$L^\infty$ estimates}

In order to get an a priori estimate for $w=|u_1|^2+|u_2|^2$ using the equation satisfied by $w$,  we need to prove that the Lagrange multipliers are positive.

\begin{lemm}If $(u_1,u_2)$ is a minimizer of \eqref{eqEnergyOmega} in $\mathcal{H}$, then the Lagrange multipliers $(\lambda_1,\lambda_2)$ in equations \eqref{eq:mainu1u2} are nonnegative.\end{lemm}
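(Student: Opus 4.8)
The plan is to read $\lambda_j$ off the Euler--Lagrange equation and reduce its sign to the behaviour of the total density $w=|u_1|^2+|u_2|^2$. I would multiply \eqref{eq:mainu1u2a} by $\bar u_1$, integrate over $D$ (the boundary term vanishing because the minimizer satisfies the natural, Neumann-type, boundary condition), and take real parts. The first two terms then combine with the centrifugal contribution $\Omega^2|x|^2$ from the potential into the magnetic Dirichlet form, giving
\[
\lambda_1\int_D|u_1|^2=\int_D|\nabla u_1-i\Omega x^\perp u_1|^2+\frac1{\ep^2}\int_D(w-1)|u_1|^2+\frac{\delta-1}{\ep^2}\int_D|u_1|^2|u_2|^2,
\]
and symmetrically for $\lambda_2$. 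Since $\int_D|u_1|^2=\alpha|D|>0$, and since the first and third terms on the right are manifestly nonnegative, the sign of $\lambda_1$ hinges entirely on $\int_D(w-1)|u_1|^2$, i.e. on the contribution of the region (essentially the interface) where the total density dips below $1$.

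This term has no definite sign on its own — expanding it as $\int_D|u_1|^4+\int_D|u_1|^2|u_2|^2-\alpha|D|$ and using only Cauchy--Schwarz yields the \emph{negative} lower bound $\alpha(\alpha-1)|D|$ — so I would not try to prove positivity from the identity alone, but from minimality. The conceptual point is that $\lambda_j$ is the derivative of the constrained minimum of $\EOep$ with respect to the mass $\alpha_j$, and that raising a component's mass pushes the average $\dashint_D w$ above the preferred value $1$, which raises the potential energy. Concretely I would reformulate the problem with the \emph{inequality} constraints $\dashint_D|u_j|^2\le\alpha_j$ in place of the equalities \eqref{H_space_definition}, and show that the relaxed minimum agrees with the original one because any relaxed minimizer must saturate both constraints. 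Indeed, if some constraint were slack, say $\dashint_D|u_1|^2<\alpha$, then $\dashint_D w<1$, so $\{w\le 1-c\}$ has positive measure for some fixed $c>0$; adding a small, spread-out bump to $|u_1|^2$ supported there (and away from the zeros of $u_1$, which form a small set) strictly lowers the energy, since for $\ep$ small the resulting $O(\ep^{-2})$ gain in $\tfrac1{4\ep^2}(1-w)^2$ dominates both the $O(\tep^{-2})$ overlap cost and the $O(1)$ kinetic cost. Once both constraints are active, the original minimizer $(u_1,u_2)$ is also a minimizer of the relaxed problem, and the KKT conditions for inequality constraints produce exactly the multipliers of \eqref{eq:mainu1u2} with the required sign $\lambda_j\ge0$.

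The hard part is precisely this activation step — ruling out a slack constraint — which is the one place where genuine minimality, rather than just the mass constraint, must be used, and which is equivalent to controlling the sign of $\int_D(w-1)|u_j|^2$. Making the localized comparison rigorous requires choosing the bump to avoid the (small) zero set of $u_1$ and bounding the induced change in the modulus part $\int_D|\nabla|u_1||^2$ of the kinetic energy, both of which are routine once one exploits that the $\ep^{-2}$ scale of the density penalty dominates. The remaining ingredients — existence of a minimizer for the relaxed problem and the identification of its KKT multipliers with $\lambda_1,\lambda_2$ — I expect to be standard.
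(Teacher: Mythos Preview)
Your approach has a sign error that makes it prove the opposite inequality. If you relax both constraints to $\dashint_D|u_j|^2\le\alpha_j$ and the relaxed minimizer has both constraints active, then KKT produces multipliers $\mu_j\ge0$ for the constraints $g_j(u)=\dashint_D|u_j|^2-\alpha_j\le0$, and stationarity reads $\nabla\EOep+\sum_j\mu_j\nabla g_j=0$. Since $\nabla g_1=\tfrac{2}{|D|}u_1$, comparison with \eqref{eq:mainu1u2a} gives $\lambda_1=-\tfrac{2\mu_1}{|D|}\le0$, not $\ge0$. Your envelope intuition is right --- $\lambda_j$ carries the sign of $\partial m/\partial\alpha_j$, and raising the mass above $\alpha_1+\alpha_2=1$ should cost energy --- but this means the relaxation that yields $\lambda_j\ge0$ via KKT is $\dashint_D|u_j|^2\ge\alpha_j$, not $\le$. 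In fact your activation claim for the $\le$-relaxation cannot hold in general: at the constrained minimizer, the variation $u_1\mapsto(1-t)u_1$ is admissible for the $\le$-problem and has energy derivative $-\lambda_1\int_D|u_1|^2$ at $t=0$, so whenever $\lambda_1>0$ the $\le$-relaxed infimum is \emph{strictly smaller} and the constraint is slack. Your bump argument therefore cannot be sound as stated; the gap is that at a genuine relaxed minimizer with slack constraint one has $\lambda_1=0$, the first-order effect of any bump vanishes, and the second-order effect is nonnegative by minimality --- the heuristic ``$O(\ep^{-2})$ gain dominates'' ignores that the minimizer has already balanced these terms.

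The paper's route is entirely different and avoids relaxation. After obtaining the same identity for $\lambda_1\alpha_1$ you wrote, it computes the second variation of $\EOep$ at $(u_1,u_2)$ in the direction $\varphi=iu_1$. This $\varphi$ is tangent to the mass sphere because $\bar u_1\varphi+u_1\bar\varphi=0$ \emph{pointwise}; for the same reason the quartic term $(\bar u_1\varphi+u_1\bar\varphi)^2/(4\ep^2)$ in the Hessian drops out, and what remains is exactly (half of) the right-hand side of the identity for $\lambda_1\alpha_1$. The sign then follows from the second-order necessary condition at the constrained minimum. This uses the phase symmetry $u_1\mapsto e^{i\theta}u_1$ in an essential way --- the direction $iu_1$ is precisely its infinitesimal generator --- and bypasses any discussion of where the total density dips below~$1$.
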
\begin{proof}
We multiply \eqref{eq:mainu1u2a} by $\bar u_1$, integrate and add the complex conjugate to find
\beq\label{eqla1}\lambda_1\alpha_1=\int_D |\nabla u_1-i\Omega x^{\perp} u_1|^2+\frac {|u_1|^2} {\ep^2} (|u_1|^2+|u_2|^2-1
)+\frac {(\delta-1)} {\ep^2} |u_1|^2 |u_2|^2.\eeq The corresponding equation holds for $\lambda_2$.
If one computes the second variation of the energy at a minimizer $(u_1,u_2)$ against functions $\varphi$:
\begin{multline*}\frac{\partial^2 E^\Omega_\ep(u_1,u_2)}{{\partial u_1}^2}\cdot (\varphi,0)=\int_D \frac 12|\nabla \varphi-i\Omega x^{\perp} \varphi|^2 +\frac {|\varphi|^2}{2\ep^2}{(|u_1|^2+|u_2|^2-1)}+\\ +\frac {\delta-1} {2\ep^2} |\varphi|^2 |u_2|^2+\frac {(\bar u_1\varphi+u_1\bar \varphi)^2}{4\ep^2}\end{multline*}
 If we assume
\beq\label{condorth}\int \bar{u}_1\varphi+u_1\bar{\varphi}=0,\eeq then this second variation is nonnegative, since we are at a minimizer. It turns out that if one takes $\varphi=iu_1$, then it satisfies pointwise $\bar{u}_1\varphi+u_1\bar{\varphi}=0$, and therefore the expression for $\lambda_1$ \eqref{eqla1} is exactly this second variation, hence is nonnegative. The same works out for $u_2$ and $\lambda_2$. \end{proof}

\begin{lemm}If $(u_1,u_2)$ is a minimizer of \eqref{eqEnergyOmega} in $\mathcal{H}$, then $(\lambda_1,\lambda_2)$ in equations \eqref{eq:mainu1u2} satisfy
\beq\label{estlaj}\lambda_j\leq \frac4{\alpha_j} \EOep(u_1,u_2),\quad\text{where}\quad \alpha_j = \dashint_D |u_j|^2.\eeq\end{lemm}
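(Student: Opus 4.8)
The plan is to bound each Lagrange multiplier $\lambda_j$ by testing the equation against a cleverly chosen competitor and exploiting minimality, rather than manipulating the formula \eqref{eqla1} directly. Starting from the identity
\beq\label{planla1}
\lambda_1\alpha_1=\int_D |\nabla u_1-i\Omega x^{\perp} u_1|^2+\frac {|u_1|^2} {\ep^2} (|u_1|^2+|u_2|^2-1)+\frac {(\delta-1)} {\ep^2} |u_1|^2 |u_2|^2,
\eeq
the difficulty is that the middle term is not sign-definite: on the region where $|u_1|^2+|u_2|^2<1$ it is negative, so one cannot simply drop it to compare with the energy. First I would rewrite the right-hand side of \eqref{planla1} in terms of the energy density appearing in $\EOep$. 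Comparing \eqref{planla1} with \eqref{eqEnergyOmega}--\eqref{Wdelta}, the kinetic term is already present, and the potential contributions $\frac{|u_1|^2}{\ep^2}(|u_1|^2+|u_2|^2-1)+\frac{(\delta-1)}{\ep^2}|u_1|^2|u_2|^2$ differ from $2W_{\ep,\delta}$ only by controllable terms, so the goal is an algebraic inequality of the form $\lambda_1\alpha_1\le 4\,\EOep(u_1,u_2)$.

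The key step is the pointwise comparison between the integrand of \eqref{planla1} and (a multiple of) the energy density $\frac12|\nabla u_1-i\Omega x^\perp u_1|^2 + W_{\ep,\delta}$. Concretely, I would show that
\beq
\frac{|u_1|^2}{\ep^2}(|u_1|^2+|u_2|^2-1)+\frac{(\delta-1)}{\ep^2}|u_1|^2|u_2|^2 \le 4\,W_{\ep,\delta}(|u_1|^2,|u_2|^2)
\eeq
pointwise, using the expression \eqref{Wdeltabis} for $W_{\ep,\delta}$. Expanding both sides and setting $s=|u_1|^2$, $t=|u_2|^2$, this reduces to an elementary inequality in the two nonnegative variables $s,t$; the negative part $s(s+t-1)/\ep^2$ is dominated by the square term $\frac1{4\ep^2}(1-s-t)^2$ together with the nonnegative cross term, and the factor $4$ is exactly what absorbs the worst case. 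The kinetic term contributes $\int_D|\nabla u_1-i\Omega x^\perp u_1|^2$, which is at most $4$ times the $\frac12$-weighted kinetic energy of component $1$ alone, hence at most $4\,\EOep$.

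The main obstacle is handling the sign-indefinite middle term cleanly; the temptation to discard it fails precisely where $|u_1|^2+|u_2|^2<1$, so the argument must keep the Modica--Mortola square $\frac1{4\ep^2}(1-s-t)^2$ in play to control it. Once the pointwise bound is secured, integrating over $D$ and combining the kinetic and potential estimates gives $\lambda_1\alpha_1\le 4\,\EOep(u_1,u_2)$, i.e.\ $\lambda_1\le \frac4{\alpha_1}\EOep(u_1,u_2)$; the identical computation with $u_1,u_2$ interchanged yields the bound for $\lambda_2$, completing the proof of \eqref{estlaj}.
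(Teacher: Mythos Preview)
Your pointwise inequality is false. With $s=|u_1|^2$, $t=|u_2|^2$ and using \eqref{Wdeltabis}, the claimed bound reduces to
\[
s(s+t-1) \le (s+t-1)^2 + (\delta-1)\,st.
\]
When $s+t>1$ and $t<1$ this is equivalent to $(1-t)(s+t-1)\le(\delta-1)st$, which fails as soon as $\delta$ is close to $1$: for instance $s=t=0.7$ gives left-hand side $0.12$ and right-hand side $0.49(\delta-1)$. Since the paper works precisely in the regime $\delta\to 1$, the ``elementary inequality'' you invoke does not hold, and no constant multiple of $W_{\ep,\delta}$ will dominate the sign-indefinite term $s(s+t-1)/\ep^2$ pointwise.

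The paper's proof avoids this obstruction by an integral (not pointwise) identity: one adds \eqref{eqla1} to the analogous identity for $\lambda_2$, obtaining $\int_D \frac{s+t}{\ep^2}(s+t-1)$ in place of $\int_D \frac{s}{\ep^2}(s+t-1)$. Because of the mass constraint $\alpha_1+\alpha_2=1$, one has $\int_D(s+t-1)=0$, and hence
\[
\int_D (s+t)(s+t-1)=\int_D (s+t-1)^2,
\]
which is exactly $4\ep^2$ times the square part of $W_{\ep,\delta}$. This yields $\alpha_1\lambda_1+\alpha_2\lambda_2\le 4\,\EOep(u_1,u_2)$, and then the nonnegativity of the $\lambda_j$ (from the previous lemma) gives each bound separately. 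The key idea you are missing is this use of the constraint to turn the indefinite term into a perfect square after integration; a purely pointwise comparison cannot succeed here.
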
 \begin{proof}We add \eqref{eqla1} and the corresponding equation for $\lambda_2$ to find
$$\alpha_1\lambda_1+\alpha_2\lambda_2= \int_{D} \sum_{j=1}^2|\nabla u_j-i\Omega x^{\perp} u_j|^2+\frac {|u_1|^2+|u_2|^2} {\ep^2} (|u_1|^2+|u_2|^2-1
)+\frac {2(\delta-1)} {\ep^2} |u_1|^2 |u_2|^2$$ Since we have the $L^2$ constraint, and $\alpha_1+\alpha_2 =1$, then $\int |u_1|^2+|u_2|^2=1$, therefore,
$$\alpha_1\lambda_1+\alpha_2\lambda_2= \int_{D} \sum_{j=1}^2|\nabla u_j-i\Omega x^{\perp} u_j|^2+\frac 1 {\ep^2} (|u_1|^2+|u_2|^2-1)^2
+\frac {2(\delta-1)} {\ep^2} |u_1|^2 |u_2|^2. $$ Since the $\lambda_i$'s are nonnegative, the result follows.\end{proof}

\begin{prop}\label{proplinfinitybound}If $(u_1,u_2)$ is a minimizer of \eqref{eqEnergyOmega} in $\mathcal{H}$, then
\beq\label{estw}\max (|u_1|^2+|u_2|^2) \leq 1+C\ep^2 \EOep(u_1,u_2).\eeq
\end{prop}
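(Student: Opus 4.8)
The plan is to apply the maximum principle to $w = |u_1|^2+|u_2|^2$, for which I would first derive an elliptic differential inequality from the system \eqref{eq:mainu1u2}. I would multiply \eqref{eq:mainu1u2a} by $\bar u_1$ and take real parts, do the same with \eqref{eq:mainu1u2b} and $\bar u_2$, and add. Using the pointwise identity $\dm\Delta|u_j|^2 = |\nabla u_j|^2 + \mathrm{Re}(\bar u_j\Delta u_j)$ and extracting $\Delta u_j$ from the equation, this gives, with $V_j = \frac1{\ep^2}(w-1+\ep^2\Omega^2|x|^2)+\frac{\delta-1}{\ep^2}|u_{3-j}|^2$,
\beq\label{plan-full}
\dm\Delta w = \sum_{j=1}^2\Big(|\nabla u_j|^2 + 2\Omega\,\mathrm{Im}(\bar u_j\,x^\perp\!\cdot\nabla u_j)\Big) + \frac1{\ep^2}w(w-1) + \Omega^2|x|^2 w + \frac{2(\delta-1)}{\ep^2}|u_1|^2|u_2|^2 - \sum_{j=1}^2\lambda_j|u_j|^2.
\eeq

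The key algebraic point is to absorb the first-order magnetic term into the diamagnetic and centrifugal terms. By Young's inequality, $|2\Omega\,\mathrm{Im}(\bar u_j x^\perp\!\cdot\nabla u_j)| \le |\nabla u_j|^2 + \Omega^2|x|^2|u_j|^2$, so that $\sum_j\big(|\nabla u_j|^2 + 2\Omega\,\mathrm{Im}(\bar u_j x^\perp\!\cdot\nabla u_j)\big) \ge -\Omega^2|x|^2 w$, which exactly cancels the centrifugal term $\Omega^2|x|^2 w$ in \eqref{plan-full}. Since $\delta>1$ the cross term is nonnegative, and therefore
\beq\label{plan-ineq}
\dm\Delta w \ge \frac1{\ep^2}w(w-1) - \Lambda w, \qquad \Lambda := \max(\lambda_1,\lambda_2),
\eeq
holds pointwise in $D$. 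By the preceding lemma \eqref{estlaj}, with $\alpha_1=\alpha$ and $\alpha_2=1-\alpha$, one has $\Lambda \le C\,\EOep(u_1,u_2)$ where $C = 4/\min(\alpha,1-\alpha)$.

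It then remains to run the maximum principle. Let $M = \max_{\overline D} w$, attained at some $x_0$, and suppose for contradiction that $M > 1 + \ep^2\Lambda$. By continuity $w > 1 + \ep^2\Lambda$ on a neighbourhood of $x_0$, where \eqref{plan-ineq} reads $\dm\Delta w \ge \frac{w}{\ep^2}(w-1-\ep^2\Lambda) > 0$, so $w$ is strictly subharmonic there. If $x_0$ is interior this contradicts $\Delta w(x_0)\le 0$. If $x_0\in\partial D$, I would use that the natural boundary condition $(\nabla u_j - i\Omega x^\perp u_j)\cdot\nu = 0$ forces $\partial_\nu|u_j|^2 = 2\,\mathrm{Re}\big(\bar u_j\, i\Omega(x^\perp\!\cdot\nu)u_j\big) = 0$, hence $\partial_\nu w = 0$; since $D$ is smooth it satisfies an interior ball condition, so Hopf's lemma applied to the subharmonic $w$ yields $\partial_\nu w(x_0) > 0$, again a contradiction. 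Thus $M \le 1 + \ep^2\Lambda \le 1 + C\ep^2\,\EOep(u_1,u_2)$, which is \eqref{estw}.

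The main obstacle is the first-order complex magnetic term $2\Omega\,\mathrm{Im}(\bar u_j x^\perp\!\cdot\nabla u_j)$: the whole argument rests on the observation that it is dominated by the diamagnetic and centrifugal contributions, so that the favourable sign of $\frac1{\ep^2}w(w-1)$ is what governs the sign of $\Delta w$ at a maximum. A secondary technical point, which I would dispatch using elliptic regularity for the Gross--Pitaevskii system, is that the $u_j$ are smooth enough for $w$ to be a genuine classical subsolution up to $\partial D$ so that Hopf's lemma applies.
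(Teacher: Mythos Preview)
Your proof is correct and follows essentially the same route as the paper's. The paper derives the identity
\[
\Delta w = 2\sum_{j}|\nabla u_j - i\Omega x^\perp u_j|^2 - 2\sum_j \lambda_j|u_j|^2 + \frac{2}{\ep^2}w(w-1) + \frac{2(\delta-1)}{\ep^2}|u_1|^2|u_2|^2,
\]
drops the nonnegative covariant-gradient and cross terms, and invokes the maximum principle directly. Your Young-inequality step $|2\Omega\,\mathrm{Im}(\bar u_j x^\perp\cdot\nabla u_j)|\le |\nabla u_j|^2+\Omega^2|x|^2|u_j|^2$ is exactly the statement that this covariant square is nonnegative, so the two presentations are the same computation unpacked differently. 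The only substantive addition in your write-up is the explicit treatment of the boundary case via the natural Neumann condition $\partial_\nu w = 0$ and Hopf's lemma, which the paper leaves implicit.
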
\begin{proof}We look for the equation satisfied by $w=|u_1|^2+|u_2|^2$: we multiply \eqref{eq:mainu1u2a} by $\bar u_1$, add the complex conjugate, and add the corresponding term with $u_2$ to find
$$\Delta w=2\sum_{j=1}^2|\nabla u_j-i\Omega x^{\perp} u_j|^2-2\lambda_1 |u_1|^2-2\lambda_2 |u_2|^2+\frac 2{\ep^2} w(w-1)+\frac {2(\delta-1)} {\ep^2} |u_1|^2 |u_2|^2.$$
This leads to $$\Delta w \geq \frac 2{\ep^2} w(w-1- \ep^2 \max (\lambda_1,\lambda_2)),$$ which implies
$$\max w\leq 1+\ep^2 \max (\lambda_1,\lambda_2).$$ The previous Lemma yields the result.\end{proof}

\subsection{the 1D system}
\begin{prop}\label{propexpdecrease}There exists a unique minimizer of
 \beq\label{minen1d}\int_{-\infty}^{+\infty} \dm|v_1'|^2+\dm |v_2'|^2  + \frac 1{4\ep^2}
 (1-|v_1|^2- |v_2|^2)^2+ \frac{\delta -1}{2\ep^2}|v_1|^2|v_2|^2\,dx.\eeq \begin{equation}\label{eqBdryGenD}
	(v_1,v_2)\to \left(0,1\right)\ \textrm{as}\ x\to -\infty,\ \ (v_1,v_2)\to
	\left(1,0\right)\ \textrm{as}\ x\to  +\infty.
	\end{equation} Moreover $|v_1|^2+ |v_2|^2\leq 1$ and \beq 0\leq \int_{-\infty}^{+\infty} (1-|v_1|^2- |v_2|^2)\leq C\frac {\ep^2}{\tilde \ep}.\eeq
    \end{prop}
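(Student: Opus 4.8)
The plan is to prove the four assertions in turn — existence, uniqueness (which can only hold up to translation), the pointwise bound $v_1^2+v_2^2\le 1$, and the integral estimate — the last being the substantive inequality and uniqueness the main obstacle. Throughout write $w=v_1^2+v_2^2$, and note that the functional in \eqref{minen1d} coincides with the one defining $\mep$ in \eqref{me}, so its infimum is $\mep$; moreover $\mep\le C/\tep$ (a competitor supported on the unit circle $v_1=\cos\theta,\ v_2=\sin\theta$ with a Modica--Mortola angular profile gives this, and by \cite{GM} one has $\mep\sim 1/2\tep$).

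For \textbf{existence} I would use the direct method. Since the integrand of \eqref{minen1d} depends on $v_1,v_2$ only through $v_1^2,v_2^2$, replacing $v_j$ by $|v_j|$ and then by $\min(|v_j|,1)$ does not increase the energy: truncation lowers the Dirichlet term, and on $\{w>1\}$ lowering a modulus toward $1$ decreases both $(1-w)^2$ (as $w$ stays $\ge 1$) and $v_1^2v_2^2$. Hence I minimize over $0\le v_j\le 1$, where a minimizing sequence is uniformly bounded with bounded $\int[(v_1')^2+(v_2')^2]$. The only obstruction to compactness is translation invariance: after translating so that $v_1(0)=\tfrac12$, I extract a subsequence converging in $C^0_{\mathrm{loc}}$ and weakly in $H^1_{\mathrm{loc}}$ and pass to the limit by weak lower semicontinuity of the Dirichlet part and Fatou for the nonnegative potential. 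Finiteness of $\int W$ forces $\dist\big((v_1,v_2),\{W=0\}\big)\to 0$ at $\pm\infty$; since the zero set of the potential in $[0,1]^2$ is exactly $\{(0,1),(1,0)\}$, the limit reaches wells at $\pm\infty$, and a standard concentration (or monotone rearrangement) argument guarantees that it realizes the prescribed transition, so it is admissible and is a minimizer.

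The \textbf{pointwise bound and the integral estimate} both follow from the equation for $w$. Combining the Euler--Lagrange equations $v_j''=\partial_{v_j}W$ gives
$$w''=2\big[(v_1')^2+(v_2')^2\big]+\frac{2}{\ep^2}\,w(w-1)+\frac{4(\delta-1)}{\ep^2}\,v_1^2v_2^2\ \ge\ \frac{2}{\ep^2}\,w(w-1).$$
As $w\to 1$ at $\pm\infty$, a point where $w>1$ would be an interior maximum with $w''\le 0$ while the right-hand side is positive; hence $w\le 1$, which yields $1-w\ge 0$ and thus $\int(1-w)\ge 0$. For the upper bound I integrate the identity above over $\R$; the boundary terms vanish since $w'\to 0$ at $\pm\infty$ (exponential approach to the hyperbolic wells, cf.\ \cite{sourdis}), giving
$$\frac{2}{\ep^2}\int_\R w(1-w)=2\int_\R\big[(v_1')^2+(v_2')^2\big]+\frac{4(\delta-1)}{\ep^2}\int_\R v_1^2v_2^2\le 8\,\mep,$$
since each summand on the right is a fixed multiple of a nonnegative part of the energy $\mep$. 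Combining this with $\int_\R(1-w)^2\le 4\ep^2\mep$ (controlled by the first term of the potential) and the elementary identity $(1-w)=w(1-w)+(1-w)^2$ gives
$$\int_\R(1-w)=\int_\R w(1-w)+\int_\R(1-w)^2\le 8\,\ep^2\mep\le C\,\frac{\ep^2}{\tep},$$
which is the asserted estimate.

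\textbf{Uniqueness} is the hardest point and can only hold up to translation. The first integral $H=\tfrac12|v'|^2-W$ is constant and equals $0$ by the boundary conditions, so minimizers satisfy the equipartition $\tfrac12|v'|^2=W$ and are precisely the minimal geodesics, for the degenerate conformal metric $\sqrt{2W}\,|dv|$, joining $(0,1)$ to $(1,0)$. Fixing the translation (say $v_1=v_2$ at $x=0$, the minimizer being symmetric under $(v_1(x),v_2(x))\mapsto(v_2(-x),v_1(-x))$), uniqueness reduces to uniqueness of this minimal geodesic. I would first prove that any minimizer is monotone ($v_1$ increasing, $v_2$ decreasing), so its orbit is a graph $v_2=\psi(v_1)$ over $[0,1]$, and then show the geodesic is unique — the principal difficulty — either by a comparison/no-conjugate-point argument exploiting that $W>0$ strictly off the two wells, or by appealing to the one-dimensional ODE analysis of \cite{sourdis}.
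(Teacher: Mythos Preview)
Your proposal is correct. The bound $w\le 1$ via the maximum principle applied to $w''\ge \tfrac{2}{\ep^2}w(w-1)$ is identical to the paper's argument, and for existence and uniqueness both you and the paper ultimately rely on the literature: the paper cites \cite{alama} for existence and monotonicity of minimizers and then \cite{AS} for uniqueness of monotone solutions to the ODE system, while you sketch the direct method and correctly flag uniqueness as the delicate point, suggesting the same monotonicity-plus-ODE route.

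The one substantive difference is the derivation of the integral estimate $\int(1-w)\le C\ep^2/\tep$. The paper combines two exact identities: the Pohozaev/equipartition relation $\int\tfrac12|v'|^2=\int W_\ep$ (obtained by multiplying by $xv_j'$) and the virial relation obtained by multiplying by $v_j$ --- which is exactly your integrated-$w''$ identity --- and then eliminates the cross term $v_1^2v_2^2$ algebraically to reach the clean equality
\[
\frac{3}{2}\int_\R\bigl(|v_1'|^2+|v_2'|^2\bigr)=\frac{1}{2\ep^2}\int_\R(1-w).
\]
You use only the virial identity, bound its right-hand side crudely by a multiple of $\mep$, and close with the elementary splitting $(1-w)=w(1-w)+(1-w)^2$ together with the energy control $\int(1-w)^2\le 4\ep^2\mep$. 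Your route avoids the Pohozaev multiplier entirely and is slightly more robust; the paper's route yields an exact constant. Both are valid and lead to the same order $\ep^2/\tep$.
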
 \begin{proof}It follows from \cite{alama}, Theorem 3.1, that there exists a minimizer for problem   \eqref{minen1d}-\eqref{eqBdryGenD}. Moreover, each minimizer satisfies that each component is monotone. Therefore, it follows from the results of uniqueness of \cite{AS} for the solutions of the corresponding Euler-Lagrange equations with monotone components that the minimizer is unique.

    The minimizer is a solution of
    \begin{equation}\label{sys1DD}
	\left\{
	\begin{array}{c}
	-v_1''+\frac 1{\ep^2} v_1(v_1^2+v_2^2- 1)+ \frac{\delta -1}{\ep^2}v_2^2 v_1=0, \\
	\\
	-v_2''+\frac 1{\ep^2}v_2  (v_1^2+v_2^2- 1)+ \frac{\delta -1}{\ep^2} v_1^2 v_2=0,
	\end{array}
	\right.
	\end{equation} In order to prove that $|v_1|^2+ |v_2|^2\leq 1$, we define $w=|v_1|^2+|v_2|^2$ and compute the equation satisfied by $w$ which yields $$w''\geq \frac 2{\ep^2} w(w-1)$$ and implies that the maximum of $w$ is less than 1.

Then, we follow the Pohozaev type proof and multiply the first equation of \eqref{sys1DD} by $x v_1'$, the second by $xv_2'$ and integrate to find
\beq\int_{-\infty}^{+\infty} \dm |v_1'|^2+\dm |v_2'|^2=\int_{-\infty}^{+\infty}\frac 1{4\ep^2}
 (1-|v_1|^2- |v_2|^2)^2+ \frac{\delta -1}{2\ep^2}|v_1|^2|v_2|^2. \eeq Moreover, if we multiply the first equation of \eqref{sys1DD} by $v_1$, the second by $v_2$, integrate and add the 2, we find\beq\int_{-\infty}^{+\infty} \frac 14 |v_1'|^2+\frac 14 |v_2'|^2 + \frac 1{4\ep^2}
 (|v_1|^2+|v_2|^2-1)(|v_1|^2+|v_2|^2)+ \frac{\delta -1}{2\ep^2}|v_1|^2|v_2|^2=0.\eeq Subtracting the two, we find
 \beq\label{estintu1u2}\int_{-\infty}^{+\infty} \frac 32 |v_1'|^2+\frac 32 |v_2'|^2= \int_{-\infty}^{+\infty}\frac 1{2\ep^2} (1-|v_1|^2- |v_2|^2). \eeq
 The energy estimate provides the result.\end{proof}
The next result is about the decrease at infinity for the rescaled 1D system:
\begin{prop}\label{propexpdecrease2} If  \eqref{sys1DD} is rescaled by $\tilde\ep$ then the new  system is  \begin{equation}\label{sys1D}
	\left\{
	\begin{array}{c}
	-v_1''+\frac 1{\delta -1} v_1(v_1^2+v_2^2- 1)+ v_2^2 v_1=0, \\
	\\
	-v_2''+\frac 1{\delta -1}v_2 (v_1^2+v_2^2- 1)+ v_1^2 v_2=0,
	\end{array}
	\right.
	\end{equation}\begin{equation}\label{eqBdryGen}
	(v_1,v_2)\to \left(0,1\right)\ \textrm{as}\ x\to -\infty,\ \ (v_1,v_2)\to
	\left(1,0\right)\ \textrm{as}\ x\to  +\infty.
	\end{equation}  The solutions converge exponentially fast to its limit at $\pm \infty$, uniformly in $\ep$.\end{prop}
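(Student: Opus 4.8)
The plan is to exploit the variational (Hamiltonian) structure of the rescaled system together with barrier arguments, and to read the uniform rate off the spectrum at the endpoints. First I would observe that \eqref{sys1D} is the Euler--Lagrange system associated with the potential $V(v_1,v_2)=\frac1{4(\delta-1)}(1-v_1^2-v_2^2)^2+\frac12 v_1^2v_2^2$, i.e. $v_j''=\partial_{v_j}V$. Multiplying the two equations by $v_1'$ and $v_2'$ respectively, adding and integrating produces the first integral $\tfrac12(|v_1'|^2+|v_2'|^2)=V(v_1,v_2)$, the constant being $0$ since $v_j'\to0$ and $V\to V(1,0)=V(0,1)=0$ at $\pm\infty$. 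By Proposition~\ref{propexpdecrease} each component is monotone and $v_1^2+v_2^2\le1$, so the solution is a heteroclinic orbit from $(0,1)$ to $(1,0)$ staying in the quarter-disc $\{v_1,v_2\ge0,\ v_1^2+v_2^2\le1\}$; in particular the limits are attained and only the rate is in question.

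Next I would locate the uniform rate through the Hessian of $V$ at the endpoints. A direct computation gives, at $(1,0)$, $D^2V=\mathrm{diag}\!\left(\tfrac{2}{\delta-1},\,1\right)$ (and symmetrically at $(0,1)$), so both critical points are nondegenerate minima, with linearized decay rates $\sqrt{2/(\delta-1)}$ in the total-density direction $1-v_1^2-v_2^2$ and $1$ in the minority-component direction. Since $\tep\to0$ forces $\delta\to1$, the first rate blows up while the second stays fixed at $1$; this fixed, $\ep$-independent rate is the source of the desired uniformity, and the whole difficulty is to turn this linear heuristic into a genuine bound in which the singular coefficient $1/(\delta-1)$ does not spoil uniformity.

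The quantitative step I would carry out by barriers near $+\infty$ (the case $-\infty$ being symmetric). Set $\sigma=1-v_1^2-v_2^2\ge0$ and $w=v_1^2+v_2^2$. Using the equation for $w$ together with the first integral one finds $\sigma''=\frac{(3w-1)}{\delta-1}\sigma-6v_1^2v_2^2$. On the region where $w\ge\frac23$ this is a linear equation with coefficient $\ge\frac1{\delta-1}\gg1$ and nonpositive right-hand side, and comparison with the supersolution $6(\delta-1)v_2^2$ (modulo lower-order terms) yields the slaving estimate $\sigma\le C(\delta-1)v_2^2$; equivalently the singular cross-term satisfies $\frac{\sigma}{\delta-1}\le C v_2^2\to0$. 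Feeding this into $v_2''=v_2\big(v_1^2-\frac{\sigma}{\delta-1}\big)$ shows that for any $\eta>0$ the coefficient exceeds $1-\eta$ once $x$ is large, so the maximum principle applied to $v_2\ge0$ with $v_2(+\infty)=0$ gives $v_2(x)\le v_2(x_0)e^{-\sqrt{1-\eta}\,(x-x_0)}$, uniformly in $\ep$. Reinserting this bound controls $\sigma$, and then its own equation (whose coefficient is $\approx\frac{2}{\delta-1}\ge1$) controls the convergence $v_1\to1$, all at a rate bounded below independently of $\ep$; differentiating the equations (or using $|v'|^2=2V$) transfers the decay to $v_1'$ and $v_2'$.

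The main obstacle is precisely the slaving estimate, i.e. controlling $\frac1{\delta-1}(1-v_1^2-v_2^2)$ uniformly as $\delta\to1$: one must show the density defect is of order $(\delta-1)v_2^2$, and not merely $o(1)$, so that the blowing-up prefactor is absorbed. An alternative to the explicit barrier would be a uniform stable-manifold analysis at the nondegenerate endpoints, or invoking the decay estimates of \cite{sourdis,AS}; but I expect the supersolution argument above to be the most transparent route, since it makes explicit the cancellation between the diverging rate $\sqrt{2/(\delta-1)}$ and the vanishing density defect.
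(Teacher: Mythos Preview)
Your strategy is sound in outline but diverges from the paper and leaves a genuine gap at the crucial step. The paper does not attempt a direct barrier argument; instead it passes to polar coordinates $v_1=R\sin\varphi_1$, $v_2=R\cos\varphi_1$, writes $R=1-(\delta-1)w_1$, and invokes the slow--fast (geometric singular perturbation) analysis of \cite{sourdis} to obtain the explicit uniform asymptotics
\[
w_1=\frac{e^{2x}}{(1+e^{2x})^2}+O(\sqrt{\delta-1})\min(e^{2x},e^{-2x}),\qquad
\varphi_1=\arctan e^x+O(\sqrt{\delta-1})\min(e^{x},e^{-x}),
\]
from which the uniform exponential decay is read off directly. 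The polar coordinates are precisely what decouples the fast variable (the density defect, rate $\sim\sqrt{2/(\delta-1)}$) from the slow one (the angle, rate $1$), making the slaving rigorous via a well--developed theory rather than an ad hoc comparison.

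The gap in your argument is exactly where you flag it, but it is more serious than a technicality. Your candidate supersolution $\bar\sigma=C(\delta-1)v_2^2$ requires computing $\bar\sigma''$, and since $v_2''=v_2\bigl(v_1^2-\tfrac{\sigma}{\delta-1}\bigr)$, the quantity $\sigma/(\delta-1)$ you are trying to bound reappears on the right-hand side; the same happens if you try to control $(v_2')^2$ through the first integral, which contains $\sigma^2/(\delta-1)$. So the comparison is circular as written, and ``modulo lower-order terms'' hides the whole difficulty. To close it you would need an independent a priori bound of the type $\sigma\le C(\delta-1)$ (pointwise, not merely the $L^1$ bound coming from Proposition~\ref{propexpdecrease}), together with a uniform localisation of the region $\{w\ge 2/3\}$; neither is available at this stage without essentially redoing the slow--fast analysis. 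If you want a self-contained argument along your lines, the honest route is a continuation/bootstrap on the set where $\sigma\le K(\delta-1)v_2^2$, but this must be set up carefully and is not shorter than citing \cite{sourdis}.
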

\begin{proof}It follows from \cite{alama}, Theorem 3.1, that there exists a minimizer for problem   \eqref{mep}. Moreover, each minimizer satisfies that each component is monotone. Therefore, it follows from the results of uniqueness of \cite{AS} for the solutions of the corresponding Euler-Lagrange equations with monotone components that the minimizer is unique. The exponential convergence at infinity is a consequence of the results of \cite{sourdis}.

	 To follow the results of \cite{sourdis}, the system can be expressed in polar coordinates:
$$v_1=R \sin\varphi_1,\quad  v_2 =R \cos\varphi_1.$$ In order to apply the slow fast theory, one considers the small parameter $\sqrt{\delta -1}$ and rewrite $R=1-(\delta -1) w_1$. Then writing $w_2=w_1'$ and $\varphi_2=\varphi_1 '$, system \eqref{sys1D} can be rewritten as a first order system in $(w_1,w_2,\varphi_1,\varphi_2)$. The results of \cite{sourdis} imply that
\begin{eqnarray}\label{behavinfty}
	w_1=\frac {e^{2x}}{(1+e^{2x})^2}+O(\sqrt{\delta -1}) \min (e^{2x},e^{-2x}) \\
	\varphi_1=\arctan e^x+O(\sqrt{\delta -1}) \min (e^{x},e^{-x})
	\end{eqnarray} uniformly as $\delta\to 1$.
 This implies the uniform exponential convergence at infinity for the functions $v_1$ and $v_2$.\end{proof}

\subsection{Upper bound for the scalar problem}

From now on, $\delta(\ep)$ is such that $\lim_{\ep\to 0}\delta = 1$ and
$$\lim_{\ep\to 0} \tep = 0,\quad\text{where}\quad \tep := \frac\ep{\sqrt{\delta(\ep) - 1}}.$$
Therefore the potential $\W$ only depends on $\ep$ and is defined by
\beq \label{vep}\W(u_1,u_2)=\frac 1 {4\ep^2} (1-{u_1}^2-{u_2}^2)^2
+\frac {(\delta-1)} {2\ep^2} {u_1}^2 {u_2}^2,\eeq

Firstly we define
\beq \label{mep} \mep = \inf\left\{\int_{-\infty}^{+\infty} \dm|\gamma'(t)|^2 + \W(\gamma(t))\,dt\mid \gamma:\R_+\to\R^n,\ \lim_{-\infty}\gamma = a,\ \lim_{+\infty}\gamma = b\right\},\eeq
where $a=(1,0)$ and $b = (0,1)$ are the two wells of the potential $\W$.

The following upper-bound is proved using a standard construction, found for instance in \cite{GM} in this particular case, but with a less precise estimate.

\begin{prop}\label{propupscalar} Assume $\alpha\in(0,1)$, $D$ is a smooth bounded domain, and let $\mae$, $\la$ be defined in \eqref{mae}, \eqref{minper}. There exists $C>0$ such that for any small enough $\ep>0$, the following estimate holds:
\beq\label{bmae}\mae\le \la \mep+ C.\eeq
Moreover, let $\gamma_\alpha = \partial\om_\alpha\cap D$, where $\om_\alpha$ is a minimizer for \eqref{minper}, be  a minimal interface. Then  for any $\eta>0$, and denoting by $V_\eta$ an $\eta$-neighbourhood of $\gamma_\alpha$,  the above bound may be achieved by  $v_\ep = (v_{1,\ep}, v_{2, \ep}):D\to\R_+\times\R_+$ such that if $\ep$ is small enough then
\begin{equation}\label{estvv}v_{1,\ep} = 1 \hbox{ in }\om_\alpha\sm V_\eta,\ v_{1,\ep} = 0 \hbox{ in } {\om_\alpha}^c\sm V_\eta, \ v_{2,\ep} = 0 \hbox{  in } \om_\alpha\sm V_\eta \hbox{ and }\|v_{2,\ep}- 1\|<C\ep \hbox{ in } C^1({\om_\alpha}^c\sm V_\eta).\end{equation}
\end{prop}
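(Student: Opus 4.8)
The plan is to exhibit the recovery sequence explicitly, following the classical Modica--Mortola/Sternberg \cite{sternberg} construction adapted to the vector-valued, mass-constrained setting. Let $\gamma=(\gamma_1,\gamma_2):\R\to\R_+^2$ be the optimal one-dimensional profile realizing $\mep$, whose existence and uniqueness are guaranteed by Proposition~\ref{propexpdecrease} and which, by Proposition~\ref{propexpdecrease2}, converges to its limits $(1,0)$ and $(0,1)$ exponentially fast on the scale $\tep$. Since $\om_\alpha$ is a volume-constrained perimeter minimizer in the smooth domain $D$, its interface $\gamma_\alpha=\partial\om_\alpha\cap D$ is smooth, so for $\eta$ small the signed distance $d$ to $\gamma_\alpha$ (chosen positive in $\om_\alpha$) is smooth on $V_\eta$ and $|\nabla d|=1$ there. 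I would then set, inside $V_\eta$,
$$v_{1,\ep}(x)=\gamma_1(h-d(x)),\qquad v_{2,\ep}(x)=\gamma_2(h-d(x)),$$
for a shift $h=O(\tep)$ to be fixed, and extend by the constant wells $(1,0)$ in $\om_\alpha\setminus V_\eta$ and $(0,1)$ in $\om_\alpha^c\setminus V_\eta$, interpolating across $\partial V_\eta$; because $\gamma$ is within $e^{-c\eta/\tep}$ of the wells at $|d|=\eta$, this truncation costs only exponentially small energy (note $e^{-c\eta/\tep}/\ep\to 0$), and it produces exactly the bulk values required in \eqref{estvv}.

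Next come the mass constraints. I would first fix $\dashint_D v_{1,\ep}^2=\alpha$ through the shift $h$: the map $h\mapsto\int_D v_{1,\ep}^2$ varies monotonically at rate $\simeq\la$ and equals $\alpha|D|+O(\tep)$ at $h=0$, hence vanishes at some $h^\ast=O(\tep)$; this displacement is $\ll\eta$, so the bulk values are untouched and the limit is still $\chi_{\om_\alpha}$. To fix the second constraint without disturbing $v_{1,\ep}$, I would rescale $v_{2,\ep}\mapsto\lambda_\ep v_{2,\ep}$. The key point is that $v_{1,\ep}^2+v_{2,\ep}^2=1$ in the bulk and equals $\gamma_1^2+\gamma_2^2$ in the tube, so the deficit estimate $\int_{-\infty}^{+\infty}(1-\gamma_1^2-\gamma_2^2)\le C\ep^2/\tep$ of Proposition~\ref{propexpdecrease} gives, once $v_{1,\ep}$ has exact mass,
$$\Big|\int_D v_{2,\ep}^2-(1-\alpha)|D|\Big|=\Big|\int_D(v_{1,\ep}^2+v_{2,\ep}^2)-|D|\Big|\le C\la\,\frac{\ep^2}{\tep}.$$
Hence $\lambda_\ep=1+O(\ep^2/\tep)=1+O(\ep)$, which, being constant on $\om_\alpha^c\setminus V_\eta$, yields the $C^1$ bound $\|v_{2,\ep}-1\|<C\ep$ claimed in \eqref{estvv}; the resulting violation of $v_1^2+v_2^2=1$ costs only $O(\delta-1)=o(1)$ in the potential.

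For the energy, since $|\nabla d|=1$ one has $|\nabla v_{j,\ep}|=|\gamma_j'(h-d)|$ exactly, with no tangential contribution, so writing the integral over $V_\eta$ in tubular coordinates $(s,t)$, $t=d$, with area element $(1-t\kappa(s))\,ds\,dt$ and using the coarea formula,
$$F_\ep(v_{1,\ep},v_{2,\ep},V_\eta)=\int_{\gamma_\alpha}\!\!\int_{-\eta}^{\eta}\Big[\dm|\gamma'(h-t)|^2+\W(\gamma(h-t))\Big](1-t\kappa)\,dt\,ds+o(1).$$
The leading term is $\la\int_{-\eta}^\eta[\dm|\gamma'|^2+\W(\gamma)]\le\la\mep$ because $\gamma$ minimizes the one-dimensional energy. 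The curvature correction is $-\big(\int_{\gamma_\alpha}\kappa\,ds\big)\int t\,[\dm|\gamma'|^2+\W(\gamma)]\,dt$; since the energy density is concentrated at scale $\tep$ and integrates to $\mep\simeq 1/2\tep$, the first moment $\int t\,[\cdots]\,dt$ is $O(\tep\cdot\mep)=O(1)$ (the exponential decay of Proposition~\ref{propexpdecrease2} makes this moment convergent). Together with the $o(1)$ contributions from $D\setminus V_\eta$, the truncation, and the rescaling, this gives $F_\ep(v_{1,\ep},v_{2,\ep})\le\la\mep+C$, whence $\mae\le\la\mep+C$ since $(v_{1,\ep},v_{2,\ep})$ is admissible.

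The step I expect to be the main obstacle is the bookkeeping of the $O(1)$ error. The construction genuinely cannot do better than an additive constant, precisely because the curvature (first-moment) correction scales like $\tep\cdot\mep=O(1)$; I would therefore have to verify that \emph{all} the remaining corrections---the gluing across $\partial V_\eta$, the loss of the pointwise relation $v_1^2+v_2^2=1$ after rescaling, the higher-order terms in the tubular Jacobian, and the mismatch between $\int_{-\eta}^\eta$ and $\int_{-\infty}^{+\infty}$---are truly $o(1)$ and do not secretly contribute at order $\mep$. This hinges on combining the two quantitative inputs from Section~2: the uniform exponential decay on scale $\tep$ (so that moments stay $O(1)$ and tails are negligible) and the sharp deficit bound $\int(1-\gamma_1^2-\gamma_2^2)\le C\ep^2/\tep$ (so that the second mass constraint is achieved with an $O(\ep)$, rather than $O(\tep)$, perturbation of $v_{2,\ep}$).
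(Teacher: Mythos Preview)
Your proof is correct and follows essentially the same approach as the paper: compose the optimal one-dimensional profile with the signed distance to $\gamma_\alpha$, truncate outside $V_\eta$ using the uniform exponential decay, shift to match the first mass constraint, and then adjust to match the second. The only noteworthy difference is in this last step: the paper enforces $\dashint_D|v_{2,\ep}|^2=1-\alpha$ by adding a \emph{local} bump $t(r-|y-x|)_+$ with $t=O(\ep)$ on a fixed ball inside $\om_\alpha^c$, whereas you use a \emph{global} multiplicative rescaling $v_{2,\ep}\mapsto\lambda_\ep v_{2,\ep}$ with $\lambda_\ep-1=O(\ep^2/\tep)$. Both work; your rescaling is arguably cleaner (it keeps $v_{2,\ep}$ constant on $\om_\alpha^c\setminus V_\eta$, giving the $C^1$ estimate in \eqref{estvv} immediately), at the price of perturbing the profile inside the tube---but as you correctly note, the resulting potential error is $O(\delta-1)=o(1)$. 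Your identification of the curvature first moment $\int t\,[\tfrac12|\gamma'|^2+\W(\gamma)]\,dt=O(\tep\cdot\mep)=O(1)$ as the source of the additive constant is exactly right, and the paper makes the same observation (remarking further that symmetry of the profile could improve this to $O(\tep)$, though that is not needed here).
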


\begin{rem} The constant $C$ in \eqref{bmae} depends  on $D$, $\alpha$. But, as will be clear from the proof, it can be chosen so as to remain valid for any $\alpha'$ in a neighbourhood of $\alpha$.
\end{rem}

\begin{proof} From Propositions~\ref{propexpdecrease},~\ref{propexpdecrease2}, the minimization problem \eqref{mep} admits a minimizer $U_\ep:\R\to\R^2$, and  the rescaled function $t\to U_\ep(\tep t)$ converges exponentially fast to its limits $a$ and $b$ as $t\to \pm\infty$, uniformly in $\ep$. Moreover, from Proposition~\ref{propexpdecrease}
$$\int_\R |1 - |U_\ep|^2| < C\ep.$$

Now let $\om_\alpha$ be a minimizer for \eqref{minper}. It is a domain with  analytic boundary  and we may define the signed distance function
\beq\label{lambda}\lambda_\alpha(x) = \dist(x,\om_\alpha) - \dist(x,\om_\alpha^c),\eeq
which is smooth in a neighbourhood of $\gamma_\alpha := D\cap \partial\om_\alpha$, say an $\eta$-neighbourhood, with bounds which are in fact independant of $\alpha$ in a neighbourhood of some, say, $\alpha_0\in (0,1)$ (to adress the above remark).

Now we modifiy the function $U_\ep$  as $\tUe$ so that $\tUe = a$ on $(-\infty,-\eta/\tep]$ and $\tUe = b$ on $[\eta/\ep+\infty)$. Because of the exponential convergence of $t\to U_\ep(\tep t)$ at infinity, this can be done in such a way that $\|\tUe- U_\ep\| < Ce^{- M/\tep}$, where $M>0$ and the norm is the $C^k$-norm for arbitrarily chosen $k$. It can also be done in such a way that
\beq\label{mod1}\int_\R |1 - |\tUe|^2| < C\ep\sqrt{\delta - 1}.\eeq

Then we let $v_\ep(x) = \tUe(t_\ep+\lambda_\alpha(x)/\tep)$, for some $t_\ep\in\R$. It is straightforward to check that there exists $C>0$ independent of $\ep$ such that, for a suitable choice of $t_\ep\in[-C,C]$, the map $v_\ep$ satisfies
\beq\label{contrainte1}\dashint_D |v_{1,\ep}|^2 = \alpha,\eeq
and that moreover $F_\ep(v_\ep) \le \mep\la + C$. Note that this last estimate could be improved to $F_\ep(v_\ep) \le \mep\la + C\tep$ by using the fact that $U_\ep$ is symmetric with respect to the origin and therefore that the curvature effect cancels to leading order on both sides of the interface.

It remains to modify $v_\ep$ in a way such that the second constraint in \eqref{mae} is satisfied. From \eqref{mod1}, \eqref{mae} we know that
\beq \label{contrainte2}\left|\dashint_D |v_{2,\ep}|^2  - (1-\alpha)\right| \le C\ep.\eeq
Then we modify $v_{2,\ep}$ as follows: we fix  $x$, $r$ depending only on $D$, $\alpha$ such that  $D(x,2r)\subset {\om_\alpha}^c$. Then, for $\ep$ small enough we have $v_{2,\ep} = 1$ on $D(x,r)$ since $v_\ep\sim b$ in an $\tep$-neighbourhood of $\gamma_\alpha$.

We let $\tilde v_{2,\ep}(y) = 1 + t (r - |y-x|)_+$ in $D(x,r)$ and $\tilde v_{2,\ep} = v_{2,\ep}$ elsewhere, for a suitably chosen $t\in\R$. From \eqref{contrainte2}, it follows that there exists $t\in (-C\ep, C\ep)$ such that
$$\dashint_D |\tilde v_{2,\ep}|^2   =  (1-\alpha).$$
We let $\tilde v_\ep = (v_{1,\ep}, \tilde v_{2,\ep})$. It is straightforward to check that,
$$F_\ep(\tilde v_\ep) \le F_\ep(v_\ep) + C \le \mep\la + C,$$
which proves the proposition.
\end{proof}

We deduce from the above the following lower bound for $\mae$.
\begin{coro} \label{mepmae} Assume $\alpha\in(0,1)$, $D$ is a smooth bounded domain, and let $\mae$, $\la$ be defined in \eqref{mae}, \eqref{minper}. There exists $C>0$ depending only on $D$ and $\alpha$ such that
\beq  \mep\la - C\ltep \le \mae.\eeq
\end{coro}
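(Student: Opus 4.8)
The plan is to read the desired inequality off Theorem~\ref{enloc} by applying it to an actual minimizer of $\mae$, and to use Proposition~\ref{propupscalar} only to verify the hypotheses of that theorem. Let $(u_1,u_2)$ attain the minimum in \eqref{mae}, so $F_\ep(u_1,u_2)=\mae$. Proposition~\ref{propupscalar} gives $\mae\le\mep\la+C$, which is the first half of \eqref{hyptheo} with $\Dep=C$; since $\mep\la$ is of order $1/\tep\to\infty$ while $\Dep$ is bounded, the requirement $\Dep\ll\mep\la$ holds automatically.

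Next I would verify the pointwise bound $u_1^2+u_2^2\le 1+C\tep$ required in \eqref{hyptheo}. This follows from the same maximum-principle computation as in Proposition~\ref{proplinfinitybound}, carried out with $\Omega=0$: writing the Euler--Lagrange system for the scalar functional $F_\ep$ with Lagrange multipliers and forming the equation for $w=u_1^2+u_2^2$ yields $\Delta w\ge \frac{2}{\ep^2}w\big(w-1-\ep^2\max(\lambda_1,\lambda_2)\big)$, and hence $\max w\le 1+C\ep^2\mae$, the multipliers being controlled by $\mae$ exactly as in the two lemmas of this section. Because $\mae\le\mep\la+C$ is of order $1/\tep$ and $\ep\ll\tep$, we get $\ep^2\mae\le C\ep^2/\tep\le C\tep$, so indeed $u_1^2+u_2^2\le 1+C\tep$.

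With both hypotheses of Theorem~\ref{enloc} in place, estimate \eqref{uppb} applies to the minimizer. Since the potential $\W$ is nonnegative, restricting $F_\ep$ to the neighbourhood $V_\eta$ can only decrease it, so
\beq
\mae=F_\ep(u_1,u_2)\ \ge\ F_\ep(u_1,u_2,V_\eta)\ \ge\ \mep\la-C(\Dep+\ltep)=\mep\la-C(C+\ltep).
\eeq
As $\ltep\to\infty$, the bounded term is absorbed and we obtain $\mae\ge\mep\la-C\ltep$. Theorem~\ref{enloc} is phrased along a subsequence, but the right-hand side here is deterministic, so the usual argument --- if the inequality failed along some sequence $\ep\to0$, feed that sequence into Theorem~\ref{enloc}, extract its subsequence, and contradict \eqref{uppb} --- promotes it to all small $\ep$, which is the statement of the corollary.

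The only genuine difficulty is concealed in the appeal to \eqref{uppb}: the logarithmic sharpness of the error is not reachable by an elementary Modica--Mortola argument. Bounding $F_\ep$ from below by $\int_D|\nabla(\Phi\circ u)|$, with $\Phi$ the (degenerate) geodesic distance between the two wells $a,b$, and then using the coarea formula, only controls $\mae$ up to an error that is a power of $\tep$: this comes from the mismatch between the volumes of the level sets $\{\Phi\circ u>t\}$ and $\alpha|D|$, and from the fact that the geodesic distance has very large gradient near the wells. That error is $o(\mep\la)$ but much larger than $\ltep$, and replacing it by $C\ltep$ is precisely what Theorem~\ref{enloc} achieves, through Sternberg's level-set/perimeter-concentration method combined with the uniform exponential convergence to the wells proved in Proposition~\ref{propexpdecrease2}. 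That is where the real work lies, and I would not reprove it inside this corollary.
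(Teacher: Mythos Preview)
Your proof is correct and follows exactly the paper's approach: apply Theorem~\ref{enloc} to a minimizer of $\mae$, with $\Dep=C$ coming from Proposition~\ref{propupscalar}, and read off the lower bound from \eqref{uppb}. You are in fact more thorough than the paper's three-line argument, which omits both the verification of the $L^\infty$ hypothesis $u_1^2+u_2^2\le 1+C\tep$ (your use of Proposition~\ref{proplinfinitybound} at $\Omega=0$ is correct, since a real minimizer of $F_\ep$ is also a complex minimizer of $E^0_{\ep,\delta}$) and the subsequence-to-full-sequence passage.
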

\begin{proof} Choose an arbitrary $\eta>0$ and apply Theorem~\ref{enloc} to a minimizer $(\rie,\rje)$ for \eqref{mae}, the minimum problem defining $\mae$. Then
from the estimate  \eqref{bmae}, we have $F_\ep(\rie, \rje) = \mae\le \mep\la + C$ and therefore \eqref{uppb} yields
$$\mep\la - C(C +\ltep) \le F_\ep(\riep, \rjep,V_\eta).$$
\end{proof} 

\section{Minimizers in the presence of rotation}

The minimization of $\El$ given by \eqref{jlambda} gives rise to a free boundary problem by using convex duality, and allows to define a first critical field as in the case of one component Bose-Einstein condensates and superconductors \cite{SSbook,IM1,Serf}.

\begin{prop} \label{propobs} Assume $\beta \ge 0$. Defining $\El$ as in \eqref{jlambda}, the minimizer $\Jl$ of $\El(\cdot,\omega)$ among divergence-free vector fields, where $\omega$ is a domain in $\R^2$, can be written $\Jl = \nabla^\perp h_\beta$, where $h_\beta$ is the unique minimizer for the problem
\beq\label{dualpb} \min\left\{\dm\int_\omega |\nabla h|^2 -  2\int_\omega h,\ \text{ $h = 0$ on $\partial \omega$ and $\|h\|_\infty \le 1/(2\beta)$}\right\}.\eeq
The function  $h_\beta$ is $C^{1,1}$ and defining $\mu_\beta := \curl \Jl +2$ we have $\mu_\beta = 2\chi_{\om_\beta}$, where $\chi_{\om_\beta}$ is the characteristic function of the set
$\{h_\beta = -1/(2\beta)\}.$ This set is understood to be empty if $\beta = 0$

Finally, $|\om_\beta| = 0$ (or equivalently $\mu_\beta = 0$) if and only if
\beq\label{hc1} \beta \le \beta_1 := \frac1{2\max |h_\omega|},\quad\text{where $\Delta h_\omega = - 2$ in $\omega$ and $h_\omega = 0$ on $\partial \omega$}.\eeq
\end{prop}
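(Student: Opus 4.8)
The plan is to obtain the whole statement from convex (Fenchel) duality, following the treatment of the obstacle problem governing the first critical field in Ginzburg--Landau theory \cite{SSbook}. Since $\diver j = 0$, I would first write $j = \np h$ for a stream function $h$, so that $\curl j = \Delta h$ and $|j|^2 = |\nabla h|^2$, turning the primal into the minimization of $\dm\int_\omega|\nabla h|^2 + \frac1{2\beta}\int_\omega|\Delta h + 2|$ over $h$, with no boundary condition imposed ($h$ being a free stream function determined up to a constant; any harmonic component of $j$ only raises the energy, so the minimizer carries none). I would then dualize the nonsmooth term by its support-function representation $\frac1{2\beta}\int_\omega|\Delta h + 2| = \sup_{\|\phi\|_\infty\le 1/(2\beta)}\int_\omega\phi\,(\Delta h + 2)$ and exchange the resulting $\inf_h\sup_\phi$ by a minimax theorem, the integrand being convex and coercive in $h$ and linear, weak-$*$ continuous in $\phi$ on the weak-$*$ compact ball.

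For fixed $\phi$ the inner minimization over the free stream function is explicit: integrating by parts, the Euler--Lagrange conditions force $\phi = 0$ on $\partial\omega$ and $\Delta h = \Delta\phi$, while if $\phi\not\equiv 0$ on $\partial\omega$ the infimum is $-\infty$ (one can make $\oint_{\partial\omega}\phi\,\partial_\nu h$ arbitrarily negative at fixed Dirichlet energy). When $\phi = 0$ on $\partial\omega$ the minimizer is $h = \phi$ and the value is $-\dm\int_\omega|\nabla\phi|^2 + 2\int_\omega\phi$. Hence the dual problem is exactly $\sup\{-\dm\int_\omega|\nabla\phi|^2 + 2\int_\omega\phi\}$ over $\{\phi = 0\text{ on }\partial\omega,\ \|\phi\|_\infty\le 1/(2\beta)\}$, that is, the minimization \eqref{dualpb}; at the saddle point the optimal current is $\Jl = \np h_\beta$ with $h_\beta$ the dual optimizer. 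Existence and uniqueness of $h_\beta$ follow from the direct method (coercivity via Poincar\'e, weak $H^1$ lower semicontinuity, and the admissible set being closed, convex and nonempty) together with strict convexity.

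Next I would read off the structure of $\mu_\beta$. Problem \eqref{dualpb} is an obstacle problem with smooth right-hand side, so classical theory yields the optimal regularity $h_\beta\in C^{1,1}(\omega)$. From the complementarity conditions, on the non-coincidence set the Euler--Lagrange equation forces $\Delta h_\beta = -2$, so that $\mu_\beta = \Delta h_\beta + 2 = 0$ there; on the coincidence set $\om_\beta$, where the constraint is saturated, $h_\beta$ is locally constant, hence $\Delta h_\beta = 0$ a.e. and $\mu_\beta = 2$. Combining, $\Delta h_\beta = -2 + 2\chi_{\om_\beta}$, i.e. $\mu_\beta = 2\chi_{\om_\beta}$.

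Finally, the obstacle is inactive precisely when the unconstrained minimizer $h_\omega$ of \eqref{hc1}, solving $\Delta h_\omega = -2$ with $h_\omega = 0$ on $\partial\omega$, is itself admissible for \eqref{dualpb}, i.e. $\max|h_\omega|\le 1/(2\beta)$, which reads $\beta\le\beta_1$. For the converse I would argue by contradiction using the previous step: if $|\om_\beta| = 0$ then $\mu_\beta\equiv 0$, so $h_\beta$ solves $\Delta h_\beta = -2$ with zero boundary data and therefore equals $h_\omega$ by uniqueness, forcing $\max|h_\omega|\le 1/(2\beta)$, i.e. $\beta\le\beta_1$. The main obstacle is the rigorous justification of the duality: choosing the correct space for $j$ (where $\curl j$ is a priori only a measure, so that $\int_\omega|\curl j + 2|$ is a total variation and the support-function pairing is with continuous test functions), verifying that the minimax exchange produces no duality gap, and controlling the boundary term so that the Dirichlet condition on $\phi$ emerges cleanly. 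Once \eqref{dualpb} is identified, the $C^{1,1}$ regularity and the complementarity description of $\mu_\beta$ are standard, and the threshold computation is elementary.
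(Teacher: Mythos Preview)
Your proposal is correct and follows essentially the same route as the paper. Both arguments write $j=\np h$, invoke convex duality to pass from the primal functional $\tfrac12\int|\nabla h|^2+\tfrac1{2\beta}\int|\Delta h+2|$ to the obstacle problem \eqref{dualpb}, and then cite standard obstacle-problem regularity for the $C^{1,1}$ and coincidence-set statements; the only cosmetic difference is that the paper packages the duality via the abstract Fenchel formula $\inf I=-\min(\tfrac12\|\cdot\|^2+\Phi^*(-\cdot))$ with the $\dot H^1$ pairing (citing \cite{brezis}), whereas you write out the same computation as an explicit $\inf_h\sup_\phi$ minimax using the support-function representation of the total variation.
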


\begin{proof} Since we minimize among divergence-free vector fields, we may let $j = \nabla^\perp h$, and minimize
$$ I(h) = \dm\int_\omega |\nabla h|^2 + \Phi(h),\quad \Phi(h) = \frac1{2\beta}\int_\omega |\Delta h +2|,$$
with the understanding that $\Phi(h) = +\infty$ if $\Delta h +2$ is not a measure with finite total variation in $\omega$, or if $\beta = 0$ and $\Delta h +2$ is not equal to $0$. Then using standard results in convex analysis (see for instance \cite{brezis}) we know that
$$\inf_h I(h) = -\min_h J(h),\quad J(h) =  \dm\int_\omega |\nabla h|^2 + \Phi^*(-h).$$
Then we compute
\begin{equation*}
\begin{split}\Phi^*(h) &= \sup_k \int_\omega \nabla h\cdot\nabla k- \frac1{2\beta}\int_\omega |\Delta k +2|\\
&= \sup_k\left\{\int_{\partial \omega} h\partial_\nu k - \int_\omega h(\Delta k +2) - \frac1{2\beta}\int_\omega |\Delta k +2| + 2\int_\omega h\right\}.
\end{split}
\end{equation*}
It is not difficult to check that the supremum is equal to $+\infty$ if $h$ is not constant on $\partial \omega$, and we may take the constant to be zero because $\Phi^*(h+c) = \Phi^*(h)$ for any constant $c$. Then we easily  find that, assuming $h = 0$ on $\partial \omega$, the supremum is $+\infty$ if $\|h\|_\infty > 1/(2\beta)$, and that it is otherwise acheived when $\Delta k +2= 0$. Therefore
$$\Phi^*(h) = 2\int_\om h,\quad \min J(h) = \min_{\substack{h\in H^1_0(\om)\\\|h\|_\infty \le 1/(2\beta)}} \dm\int_\omega |\nabla h|^2 - 2\int_\omega h.$$
This proves the first part of the proposition, the rest being well known results on the obstacle problem, see \cite{cafa}, or \cite{Serf} for the last assertion.
\end{proof}

\subsection*{Upper bound, case A}

This follows closely the construction in \cite{SSbook}, Chapter~7, see also \cite{Serf} for an even more closely related construction, thus we will be a bit sketchy for the parts of the proof which can be found in these references.

We  assume that $\O/\ltep$ converges to $\beta\ge 0$ and we are going to construct a test couple $(\rie,\rje)$ such that
\beq\label{upcaseA} \EOep(\rie,\rje) \le \mep\la + \Omega^2 \(\min_{\diver j = 0} \El(j,\om_\alpha)+ \min_{\diver j = 0} \El(j,\om_\alpha^c)\) + o(\ltep^2),\eeq
where $\om_\alpha$ is a minimizer of \eqref{minper}.

We choose an arbitrary $\eta>0$, and let
\beq\label{vdelta}V_\eta = \{x\mid d(x,\gamma_\alpha) < \eta\}, \quad D_1 = \om_\alpha\cup V_\eta, \quad D_2 = \om_\alpha^c \cup V_\eta.\eeq We begin by defining the phase $\vpie$ (resp. $\vpje$) of $\rie$ (resp. $\rje$).
We need to define the phase $\vpie$ on $D_1$ rather than $\om_\alpha$ because the modulus of $\rie$ will not vanish outside $\om_\alpha$ exactly, but outside a slightly larger set. However $\eta$ is arbitrary and will be sent to $0$ eventually.

Denote by $h_1$ (resp. $h_2$) a minimizer of \eqref{dualpb} in $D_1$ (resp. $D_2$) and let $\mu_i = \Delta h_i +2$, $i = 1$, $2$. Then from Proposition~\ref{propobs} we have $\mu_i = 2\chi_{\om_{\alpha, i}}$, where $\chi_{\om_{\alpha, i}}$ is the characteristic function of $\om_{\alpha, i}$, defined as the set where $h_i$ is equal to $1/(2\beta)$, i.e. saturates the constraint in \eqref{dualpb}. Note that $\om_{\alpha,1}$ is a subset of $\om_\alpha$ while $\om_{\alpha,2}$ is a subset of $\om_\alpha^c$. We have
\beq \label{Ebeta} \min_{\diver j = 0} \El(j,D_i) = \El(\np h_i,D_i) = \dm\int_{D_i} |\nabla h_i|^2 + \frac1\beta|\om_{\alpha, i}|,\eeq
where it is understood in the case $\beta = 0$ that the second term is equal to $0$ since $\om_{\alpha, i} = \varnothing$ in this case.

{\em The simplest case is when}  $\beta < \min(\beta_1(\om_\alpha), \beta_1(\om_\alpha^c))$. Then by choosing $\eta$ small enough we have $\beta < \min(\beta_1(D_1), \beta_1(D_2))$ and thus $\om_{\alpha, 1}$ and $\om_{\alpha, 2}$ are empty, this is the case without vortices. Then we define
\beq\label{phasemeissner}\nabla \vpie = \O\nabla^\perp h_1 + \O x^\perp, \quad \nabla \vpje = \O\nabla^\perp h_2 + \O x^\perp.\eeq
Note that, since  $\Delta h_i +2 = 0$, $i = 1$, $2$, the right-hand sides above are curl-free hence they are indeed gradients of  well defined functions in $D_1$ (resp. $D_2$). Then we let $\rie = \vie e^{i\vpie}$, $\rje = \vje e^{i\vpje}$, where
$v_\ep = (\vie,\vje)$ is defined in  Proposition~\ref{propupscalar}. We have
\beq\label{boundrot1}\EOep(\rie,\rje) = F_\ep(\vie,\vje) +  \dm\int_{D_1}\viesq|\nabla \vpie - \O x^\perp|^2+  \dm\int_{D_2}\vjesq|\nabla \vpje - \O x^\perp|^2.\eeq
From Prop~\ref{propupscalar}, $F_\ep(\vie,\vje)$ is bounded above by $\la\mep +C$. Still from Proposition~\ref{propupscalar}, we have $|\vie|^2$, $|\vje|^2\le 1+C\ep$, where $C$ depends only on $\alpha$, $D$. Therefore, in view of \eqref{phasemeissner} we have
$$ \int_{D_1}\viesq|\nabla \vpie - \O x^\perp|^2+  \dm\int_{D_2}\vjesq|\nabla \vpje - \O x^\perp|^2\le \O^2 (1+C\ep) \(\int_{D_1} |\nabla h_1|^2 +\int_{D_2} |\nabla h_2|^2\). $$
Thus, in view of \eqref{Ebeta} and the fact that $\om_{\alpha, 1}$ and $\om_{\alpha, 2}$ are empty, we may write \eqref{boundrot1} as
$$\EOep(\rie,\rje) \le \la\mep +  (1+C\ep) \O^2 \(\min_{\diver j = 0} \El(j,D_1) + \min_{\diver j = 0} \El(j,D_2)\) +C,$$
which in turn implies that
$$\limsup_{\ep\to 0} \frac{\EOep(\rie,\rje)  -  \la\mep}{\O^2}\le \min_{\diver j = 0} \El(j,D_1) + \min_{\diver j = 0} \El(j,D_2).$$
This is not exactly \eqref{upcaseA} since the domain $D_1$ (resp. $D_2$) is not exactly equal to $\om_\alpha$ (resp. $\om_\alpha^c$). However \eqref{upcaseA} follows from the above when we let   $\eta\to 0$.\medskip

{\em The case where}  $\beta > \min(\beta_1(\om_\alpha), \beta_1(\om_\alpha^c))$, or equivalently the case where either $\om_{\alpha, 1}$ or $\om_{\alpha, 2}$ is nonempty is a bit more involved as it involves vortices.  As in \cite{SSbook}, Chapter~7, or \cite{Serf}, we may approximate $\O\mu_1$ (resp. $\O \mu_2$) by
$$\muie = 2\pi \sum_{i = 1}^{\nie} \muiie\quad\left(\text{resp.}\quad \muje = 2\pi \sum_{i = 1}^{\nje}\mujie\right),$$
where $\muiie$ (resp. $\mujie$) is the uniform positive measure of mass $2\pi$  in $B(\aie,\tep)$ (resp. $B(\bie,\tep)$) and $\{\aie\}_i$ (resp. $\{\bie\}_i$) are points in $\om_{\alpha, 1}$ (resp. $\om_{\alpha, 2}$) at distance at least $2\tep$ from one another chosen such that $\muie/\O$ (resp. $\muje$)  converges to $2\chi_{\om_{\alpha, 1}}$ (resp. $2\chi_{\om_{\alpha, 2}}$).

Then,  if we define $\hie\in H^1_0(D_1)$ (resp $\hje\in H^1_0(D_2)$) to satisfy
\beq\label{hie} \Delta \hie = \muie - 2\O\quad \left(\text{resp.}\quad \Delta \hje = \muje - 2\O\right),\eeq
it can be shown (see  \cite{SSbook} or \cite{Serf}) that, as $\ep\to 0$, for $k = 1,2$,
\beq\label{boundphase} \dm\int_{D_k}|\nabla h_{k,\ep}|^2 \le \O^2 \El(\nabla^\perp h_k,D_k) + o(\ltep^2).\eeq

Then we let
\beq\label{defphase} \nabla \vpie = \nabla^\perp \hie + \O x^\perp, \quad \nabla \vpje = \nabla^\perp \hje + \O x^\perp.\eeq
The fact that $\muiie$ (resp. $\mujie$) is a positive measure of mass $2\pi$ supported in $B(\aie,\tep)$ (resp. $B(\bie,\tep)$) and \eqref{hie} imply that this   indeed defines  gradients of  functions which are well defined modulo $2\pi$ in $D_1\setminus \cup_i B(\aie,\tep)$  and $D_2\setminus\cup_i B(\bie,\tep)$, respectively (see the aforementionned references for details.)
Note that \eqref{defphase} defines $\vpie$ (resp. $\vpje$)  only in $D_1$ (resp. $D_2$). Where it is not defined by \eqref{defphase}, we let the phases be $0$ which apriori induces a discontinuity, but in fact does not because the modulus of $\rie$ (resp. $\rje$) will be defined to be zero where the discontinuity occurs.

Now we define  the modulus of $\rie$ (rep. $\rje$). Define  $v_\ep = (\vie,\vje)$ as in  Proposition~\ref{propupscalar}. Recall that $\vie$ is equal to $1$ in ${D_2}^c$, and equal to $0$ in ${D_1}^c$ while $\vje$ is equal to $0$ in ${D_2}^c$ and $\vje-1$ is bounded by $C\ep$ in $C^1({D_1}^c)$.

We modify $v_\ep$ in the vortex balls: Let $\theta(r) = \pi/2$ if $r\in [0,1]$ and $\theta(r) = (2-r) \pi /2$ if $r\in (1,2)$. Let  $\rhoie = \vie$ and $\rhoje = \vje$ outside $\cup_i B(\aie,2\tep)\cup_i B(\bie,2\tep)$). For $x\in B(\aie,2\tep)$ let  $\rhoie(x) = \cos\theta(r/\tep)$, where $r = |x-\aie|$ and $\rhoje(x) = \sin\theta(r/\tep)$. For $x\in B(\bie,2\tep)$ let  $\rhoje(x) = \vje(x)\cos\theta(r/\tep)$ and $\rhoie = \sin\theta(r/\tep)$. Note that since the balls are centered at points belonging to either $\om_{\alpha, 1}$ or $\om_{\alpha, 2}$, they are at a fixed distance from the interface $\gamma_\alpha$ hence from (\ref{estvv}), $\vie$ is equal to either $0$ or $1$ on the balls while $\vje$ is either equal to $0$ or such that $\|\vje- 1\|_{C^1}<C\ep$. It is  straightforward to check that in any vortex ball $B = B(\aie,2\tep)$ or $B = B(\bie, 2\tep)$ we have  $F_\ep(\rhoie,\rhoje) < C$, where $C$ is independent of $\ep$. Therefore the total contribution of the balls to $F_\ep(\rhoie,\rhoje)$ is bounded by $C\O$.

We define  $\rie = \rhoie e^{i\vpie}$ and $\rje = \rhoje e^{i\vpje}$. Then \eqref{boundrot1} holds with $\rhoie$ replacing $\vie$ (resp. $\rhoje$ replacing $\vje$), and we deduce as above from Proposition~\ref{propupscalar} and \eqref{boundphase} that
\beq\label{almostbsup} \EOep(\rie,\rje)  \le \la\mep +  (1+C\ep) \O^2 \(\min_{\diver j = 0} \El(j,D_1) + \min_{\diver j = 0} \El(j,D_2)\) +C\O.\eeq
However we may not yet conclude that \eqref{upcaseA} is satisfied as in the previous case because $(\rie,\rje)$ does not satisfy the constraint \eqref{constraint}, due to   the modification of $(\vie, \vje)$ in the vortex balls. Since the number of balls is bounded by $C\ltep$ and their radius is $2\tep$, we have
\beq\label{firsterror} \left|\dashint_D |\rie|^2 - \alpha\right| \le  C\tep^2\ltep.\eeq
On the other hand
$$\left|\dashint_D |\rie|^2 + |\rje|^2 - 1\right| = \left|\dashint_{D}  |\rie|^2 + |\rje|^2 - \viesq - \vjesq\right|  =
\left|\dashint_{\cup_i B(\bie,2\tep)}  (1 - \vjesq) \sin^2\theta\right|,  $$ thus --- since $\vje-1$ is bounded by $C\ep$ in ${D_1}^c$ --- we deduce
\beq\label{seconderror}\left|\dashint_D |\rie|^2 + |\rje|^2 - 1 \right| \le C\ep.\eeq
To correct the first error we perturb the value of $\alpha$ relative to  which   $(\vie,\vje)$ is defined in the above construction. If $(\vie,\vje)$ is defined in  Proposition~\ref{propupscalar} with a value $\alpha+t$, and the definition of $\rie$ and $\rje$ is otherwise unchanged, then the average of $|\rie|^2$ over $D$ is a continuous function of $t$ and \eqref{firsterror} tells us that it is equal to $\alpha+t$ within an error $C\tep^2\ltep.$ Thus there exists $t_\ep$ such that $|t_\ep|\le C\tep^2\ltep$ and such that  the resulting  $(\rie,\rje)$ satisfies
$$\dashint_D |\rie|^2 = \alpha.$$
Then $|\rje|$ needs to be modified in order for the second constraint to be satisfied. In view of \eqref{seconderror}, this may be done as in the proof of Proposition~\ref{propupscalar} by adding to $\rje$ a correction which is bounded by $C\ep$ in $C^1(D)$. Still denoting $(\rie,\rje)$ the modified test configuration, the following modification of \eqref{almostbsup} holds:
$$\EOep(\rie,\rje)  \le \ell_{\alpha+t_\ep} \mep +  (1+C\ep) \O^2 \(\min_{\diver j = 0} \El(j,D_1) + \min_{\diver j = 0} \El(j,D_2)\) +C\O.$$
Since $\alpha\to\la$ is locally lipschitz, we have $\ell_{\alpha+t_\ep}\le \la + C|t_\ep|\le \la + C\tep^2\ltep$. Then grouping the error terms the above may be rewritten as
$$\EOep(\rie,\rje)  \le \la \mep + \O^2 \(\min_{\diver j = 0} \El(j,D_1) + \min_{\diver j = 0} \El(j,D_2)\) + C\O.$$
As in the case without vortices \eqref{upcaseA} follows by taking a suitable diagonal sequence $\ep\to 0$, $\eta\to 0$.

\subsection*{Upper bound, cases B and C} As above we choose an arbitrary $\eta>0$, and define $V_\eta$, $D_1$ and $D_2$ as in \eqref{vdelta}. We define a test configuration $(\rie = \rhoie e^{i\vpie},\rje = \rhoje e^{i\vpje})$  and then prove that
\beq\label{upcaseBC}\EOep(\rie,\rje) \le \frac{|D|}{2} \O \log\frac1{\tep\sqrt\O} + \mep\la + O(\O)\eeq

As in \cite{SSbook,Serf} we define the lattice
$$\Lambda_\ep = \sqrt{\frac\pi\O}\Z\times\sqrt{\frac\pi\O}\Z$$
and let $h_\ep$ be  the $\Lambda_\ep$-periodic solution of
$$ \Delta h_\ep = 2\pi\(\sum_{p\in\Lambda_\ep}\delta_p\) - 2\O$$
in $\R^2$. Then we let $\vphi_\ep$ be such that $\nabla \vphi_\ep = \np h_\ep + \O x^\perp$, so that $\vphi_\ep$ is well-defined modulo $2\pi$ outside $\Lambda_\ep$ since
$$\curl \nabla \vphi_\ep = \Delta h_\ep + 2\O = 2\pi\sum_{p\in\Lambda_\ep} \delta_p,$$
 As in \cite{SSbook, Serf}, it is straightforward to check that
\beq\label{enerphase}\dm\int_{D\sm\cup_{p\in\Lambda_\ep} B(p,\tep)} |\nabla\vphi_\ep - \O x^\perp|^2 = \dm\int_{D\sm\cup_{p\in\Lambda_\ep} B(p,\tep)} |\nabla h_\ep|^2 \le \frac{|D|}{2} \O \log\frac1{\tep\sqrt\O} + C\O.\eeq
Then we let
\beq\label{defvp} \vpie = \vphi_\ep \chi_{D_1},\quad \vpje = \vphi_\ep \chi_{D_2}.\eeq
Note that as above, the discontinuity in the phases $\vpie$, $\vpje$ is unimportant since the modulus will be zero where it occurs.

To define the modulus, let $\thetaper$ be periodic w.r.t. the square $[-1/(2\sqrt\O),1/(2\sqrt\O)]\times [-1/(2\sqrt\O),1/(2\sqrt\O)]$ and on this square let
$$\thetaper (x) = \begin{cases} \pi/2 & \text{if $|x| < \tep$,}\\ (2\tep - |x|) \pi/(2\tep) & \text{if $\tep\le|x| < 2\tep$,}\\ 0 & \text{otherwise.}\end{cases}$$
Then let $(\vie,\vje)$ be given by  Proposition~\ref{propupscalar} and define
\beq\label{defrho} \rhoie = \vie\cos\thetaper + \vje \sin\thetaper,\quad  \rhoje = -\vie\sin\thetaper + \vje \cos\thetaper,\eeq
so  that
\beq\label{fc}\dashint_D \rhoie^2+\rhoje^2 = \dashint_D \viesq+\vjesq = 1.\eeq
Also, since $\rhoie = \vie$ except on the balls of radius $2\tep$ centered on the lattice $\sqrt{\frac\pi\O}\Z\times\sqrt{\frac\pi\O}\Z$, we have
$$\left|\dashint_D |\rhoie|^2 - \alpha\right|  = \left|\dashint_D |\rhoie|^2 - \viesq\right| \le  C\tep^2\O.$$
As in the previous cases, there exists a real number  $t_\ep$ such that $|t_\ep| < C\tep^2\O$ and such that if we define $(\vie,\vje)$ by applying   Proposition~\ref{propupscalar} to $\alpha + t_\ep$ rather than $\alpha$ and $(\rhoie, \rhoje)$ by \eqref{defrho} then
$$\dashint_D |\rhoie|^2  = \alpha$$
and, using \eqref{fc},
$$\dashint_D |\rhoje|^2  = 1- \alpha.$$
Then let $\rie = \rhoie e^{i\vpie}$ and $\rje = \rhoje e^{i\vpje}$. From the previous considerations they satisfy the constraints in \eqref{thmin} and thus
$$\min \EOep \le \EOep(\rie,\rje),$$
which we estimate now.

First, since $\rhoie = 0$ outside $D_1$ and $\rhoje = 0$ outside $D_2$ we have
\begin{equation}\label{bsphase}\begin{split}
\dm\int_D \rhoie^2 |\nabla\vpie - \O x^\perp|^2 + \rhoje^2 |\nabla\vpje - \O x^\perp|^2&= \dm\int_D (\rhoie^2 + \rhoje^2) |\nabla h_\ep|^2 \\ &\le (1+C\ep) \(\frac{|D|}{2} \O \log\frac1{\tep\sqrt\O} + C\O\).\end{split}\end{equation}
To estimate the integral of $|\nabla\rhoie|^2$ ad $|\nabla\rhoje|^2$, we note first that from \eqref{defrho} we have
$$|\nabla \rhoie|^2 + |\nabla \rhoie|^2 =  |\nabla \vie|^2 + |\nabla \vje|^2 + |\nabla \thetaper|^2 (\viesq + \vjesq) + 2 \nabla \thetaper (\vje\nabla\vie - \vie\nabla\vje).$$
Then using the fact that $|\nabla\thetaper|$ is supported in $\cup_{p\in\Lambda_\ep} B(p,2\tep)$, bounded by $ C/\tep$, that $\vie$ and $\vje$ are bounded uniformly by $1+C\ep$, and that $\nabla\vie$, $\nabla\vje$ are bounded by $C/\tep$ we easily deduce that
\begin{equation}\label{bsrho}
\dm\int_D  |\nabla \rhoie|^2 + |\nabla \rhoie|^2 = \dm\int_D |\nabla \vie|^2 + |\nabla \vje|^2 + O\(\O\).  \end{equation}
It remains to estimate the integral of $\W(\rie,\rje)$ as defined in \eqref{vep}. From \eqref{defrho} we have
\beq\label{pot1}\int_D (1 - \rhoie^2 - \rhoje^2)^2 = \int_D ( 1 - \viesq - \vjesq )^2.\eeq
Moreover, outside $\cup_{p\in\Lambda_\ep} B(p,2\tep)$ we have $\rhoie^2\rhoje^2 = \viesq\vjesq$ therefore
\beq\label{pot2}\int_D \rhoie^2\rhoje^2 = \int_D \viesq\vjesq + O(\tep^2\O).\eeq
In view of \eqref{bsphase}, \eqref{bsrho}, \eqref{pot1}, \eqref{pot2} we deduce
$$\EOep(\rie,\rje) \le \frac{|D|}{2} \O \log\frac1{\tep\sqrt\O} + F_\ep(\vie,\vje) + O(\O)\le
\frac{|D|}{2} \O \log\frac1{\tep\sqrt\O} + \mep\la + O(\O),$$
proving \eqref{upcaseBC}.

\subsection*{Lower bound and convergence, Case A}

Assume $\ep>0$ and let $(\rie,\rje) $ be a minimizer  of $\EOep$. We let $\rhoie = |\rie|$, $\rhoje = |\rje|$. Then
\beq\label{decomp} \EOep(\rie,\rje) = F_\ep(\rhoie,\rhoje) + G_\ep(\rie,\rje), \eeq
where, defining $\Jie$, $\Jje$ as in \eqref{jeps},
\beq\label{split}G_\ep(\rie,\rje) =  \dm\int_{D} \frac{|\Jie|^2}{\rhoie^2}+   \frac{|\Jje|^2}{\rhoje^2}.\eeq
The term $F_\ep(\rhoie,\rhoje)$ contains  the terms in the energy which depend only on  the positive scalars $\rhoie$, $\rhoje$, and do not depend on the phases of $\rie$, $\rje$. From the definition \eqref{mae} of $\mae$ and Corollary~\ref{mepmae} we have
\beq\label{lowscal} F_\ep(\rhoie,\rhoje)\ge \mae \ge \mep\la - C\ltep.\eeq

On the other hand, assuming $\O = \beta\ltep$, we know from the upper-bound \eqref{upcaseA} proved above that
$\EOep(\rie,\rje) \le \mep\la + C\ltep^2$, which implies  that $F_\ep(\rhoie,\rhoje)\le \mep\la + C\ltep^2$. Then from Proposition~\ref{proplinfinitybound} and the bound of the energy by $C/\tep$, we have $\rie^2+\rje^2 - 1 < C\tep$, hence we may apply Theorem~\ref{enloc} to $(|\rie|,|\rje|)$ to find  that any sequence $\{\ep\}$ converging to $0$ admits a subsequence (not relabeled)  such that  $\rhoie\to\chi_\oma$ and $\rhoje\to\chi_{\oma^c}$ for some  minimizer $\oma$ of \eqref{minper}, and moreover that for any $\eta>0$ we have
\beq\label{locscal} F_\ep(\rhoie, \rhoje,V_\eta) \ge \mep\la - C \ltep^2,\eeq
where $V_\eta$ denotes an $\eta$-neighbourhood of $\gamma_\alpha :=\partial\oma\cap D$. Note that $\gamma_\alpha$ is smooth. It follows from \eqref{locscal} and \eqref{upcaseA}, in view of \eqref{decomp}, that
\beq\label{upaway} F_\ep(\rhoie, \rhoje,D\sm V_\eta) \le C \ltep^2,\quad G_\ep(\rie,\rje,D\sm V_\eta)\le C\ltep^2.\eeq

To obtain the desired lower-bound we will bound from below $G_\ep(\rie,\rje)$ on $D_1 = \oma\sm V_\eta$ and $D_2 = \oma^c\sm V_\eta$. For convenience, we choose $V_\eta$ such that $D_1$ and $D_2$ have smooth boundaries. The lower bound on each component will be that of a one-component condensate as computed in \cite{Serf}, see also \cite{ss1}, hence we will be a bit sketchy in the proof. From \eqref{upaway} we have
$$\int_{D_1} |\nabla \rhoie|^2+|\nabla \rhoje|^2 + \frac1{\ep^2} (1 - \rhoie^2 - \rhoje^2)^2 \le C\ltep^2.$$
Since $|\nabla(\rhoie^2+\rhoje^2)|^2\le C |\nabla \rhoie|^2+|\nabla \rhoje|^2$, it follows --- using the coarea formula as in \cite{SSbook}, Proposition~4.8, suitably adapted ---  that the set
$$ \{x\in D_1\mid |1 - \rhoie^2 - \rhoje^2| > \ltep^{-1}\}$$
may be  included in the union $A_\ep$ of a finite number of closed disjoint balls whose sum of radii is bounded by $C\ep\ltep^4$.

Similarly, still using \eqref{upaway}, we have
$$\int_{D_1} |\nabla \rhoie|^2+|\nabla \rhoje|^2 + \frac1{\tep^2} \rhoie^2 \rhoje^2  \le C\ltep^2,$$
from which we deduce using the fact that  $|\nabla(\rhoie^2\rhoje^2)|^2\le C |\nabla \rhoie|^2+|\nabla \rhoje|^2$, that the set
$$ \{x\in D_1\mid \rhoie^2 \rhoje^2 > \ltep^{-1}\}$$
may be  covered by  the union $B_\ep$ of a finite number of closed disjoint balls whose sum of radii is bounded by $C\tep\ltep^4$.

Let $D_1^\ep = D_1\sm(A_\ep\cup B_\ep).$ If $x\in D_1^\ep$ then both $\rhoie^2 \rhoje^2$ and $|1 - \rhoie^2 - \rhoje^2|$ are bounded by $C\ltep^{-1}$, from which we deduce, for $\tep>0$ small enough, that for each point, either $|1 - \rhoie^2| \le 4C\ltep^{-1}$ or $|1 - \rhoje^2| \le 4C\ltep^{-1}$. We prove that if $\ep>0$ is small enough, depending on $D_1$, then necessarily $|1 - \rhoie^2| \le 4C\ltep^{-1}$ holds. Indeed the number of connected components of $D_1$ and $D_1^\ep$ is the same if $\ep$ is small enough, this is because small closed disjoint balls are removed from $D_1$, and because the boundary of $D_1$ is smooth. Then, if $|1 - \rhoje^2| \le 4C\ltep^{-1}$ held for some $x\in D_1^\ep$, it would hold also on the corresponding connected component, which would contradict --- if $\tep$ is small enough --- the fact that the integral of $\rhoje^2$ on $D_1$ converges to $0$.

Thus we have $|1 - \rhoie^2| \le 4C\ltep^{-1}$ on $D_1^\ep$. From here, we may reproduce  the proof of the lower-bounds in \cite{SSbook}, Chapter~7 or \cite{Serf}, to deduce that $B_\ep$ may be included in a union of disjoint closed balls $B_1$,\dots,$B_k$ with total radius bounded by $C\ltep^{-10}$ (the power is chosen large enough but is not optimal) in such a way  that denoting by $d_i$ the winding number of $\rie$ on $\partial B_i$, with $d_i$ set to $0$ if $B_i$ intersects the complement of $D_1$, we have as $\ltep\to 0$,
\beq\label{lowballs}\int_{\cup_i B_i} \rhoie^2|\nabla \vpie - \O x^\perp|^2\ge \pi  \(\sum_{i=1}^k |d_i| \) \ltep (1 - o(1)).\eeq
Moreover, the estimate on the sum of the radii of the balls $B_i$ ensures (see \cite{SSbook} or \cite{Serf}) that, as $\ep \to 0$ and in the sense of distributions,
\beq\label{jac}\curl \Jie +2\O - \nue \to 0,\quad\text{where}\quad \nue = 2\pi\sum_i d_i\delta_{a_i},\eeq
and where  $a_i$ is the center of $B_i$.

 Now, \eqref{upaway} and the fact that $\rie^2+\rje^2 - 1 < C\tep$ imply that $\{\Jie/\O\}_\ep$ is bounded in $L^2(D_1)$, hence converges weakly in $L^2(D_1)$ to some $j_1$, modulo a subsequence. Moreover
 \beq\label{lowlim} \liminf_{\ep\to 0}  \frac{G_\ep(\Jie,\Jje,D_1)}{\O^2} \ge \dm \int_{D_1} |j_1|^2.\eeq

 From \eqref{lowballs} and \eqref{upaway} we deduce that $\{\nue/\O\}_\ep$ is bounded in the set of measures, hence again converges weakly modulo a subsequence. Therefore, using \eqref{jac}, $(\curl\Jie +2/\O)/\O$ converges in the sense of distributions to a measure $\mu_1$. Obviously we have $\mu_1 = \curl j_1+2$.

 In the case $\beta = 0$, the lower bound \eqref{lowballs} together with the apriori bound \eqref{upaway} implies that $\sum_i |d_i| \ll \O$ as $\ep\to 0$, hence $\mu_1 = 0$, using \eqref{jac}. Thus in this case $\curl j_1+2 = 0$ and we deduce directly from \eqref{lowlim} that
 $$ \liminf_{\ep\to 0}  \frac{G_\ep(\Jie,\Jje,D_1)}{\O^2} \ge \min_{\substack{ \diver j = 0\\\curl j +2 = 0}} \El(j,D_1),$$
 with a similar lower bound holding in  $D_2$ as well.

 When $\beta >0$, arguing as in  \cite{SSbook} or \cite{Serf}, since $\cup_i B_i$ has measure tending to $0$ as $\ep\to 0$, by going to a further subsequence we may add up the lower bounds \eqref{lowballs} and \eqref{lowlim} to find that
 $$\liminf_{\ep\to 0} \frac{G_\ep(\Jie,\Jje,D_1)}{\O^2} \ge \dm \int_{D_1} |j_1|^2 + \frac{1}{2\beta}\int_{D_1}|\curl j_1 +2|,$$
 where the last integral should be understood as the total variation of the measure $\mu_1$. The same argument in $D_2 = \oma^c\sm V_\eta$ yields
 $$\liminf_{\ep\to 0} \frac{G_\ep(\Jie,\Jje,D_2)}{\O^2} \ge \dm \int_{D_2} |j_2|^2 + \frac{1}{2\beta}\int_{D_1}|\curl j_2 +2|,$$
 where $j_2$ is the limit as $\ep\to 0$ of $\Jje/\O$.

 Adding the above lower bounds, either in the case $\beta = 0$ or $\beta>0$,  and in view of \eqref{decomp} we find that
 $$ \liminf_{\ep\to 0} \frac{\min \EOep - \mep\la}{\O^2} \ge  \min_{\diver j = 0} \El(j,D_1)+ \min_{\diver j = 0} \El(j,D_2).$$
 We recall that $D_1 = \oma\sm V_\eta$, $D_2 = \oma^c\sm V_\eta$. Since the above lower bound is true for any $\eta>0$, we deduce that the inequality holds with $\oma$ (resp. $\oma^c$) replacing $D_1$ (resp. $D_2$). This proves the lower part of \eqref{minA}.

 It is readily checked that for minimizers $(\rie,\rje)$, since the upper and lower bounds match, then necessarily $\Jie$ (resp. $\Jje$) converges modulo subsequences to a minimizer of $\El$ on $\oma$ (resp. $\oma^c$). This concludes the proof of Part~A of Theorem~\ref{thmin}.

\subsection*{Lower bound and convergence, Cases B and C}
The method to compute the lower bounds on the energy of minimizers in cases~B and~C is, as in \cite{ss2}, see also \cite{SSbook}, to suitably rescale things so that in rescaled coordinates the rotation $\O$ is not too large. Then a lower bound is computed along the lines of case~A on rescaled balls of radius one which correspond to small  balls in the original scale. The latter step is summarized in the following

\begin{lemm}\label{lowresc} Let $\EOep$ be as in \eqref{eqEnergyOmega} and $F_\ep$, $G_\ep$ be as in \eqref{decomp}, \eqref{split}. Assume that $\delta = \delta(\ep)$ and that $\tep\to 0,$ $\tep \gg\ep$ as $\ep\to 0$, where $\tep = \ep/\sqrt{\delta-1}$.

There exists $C>0$ such that for any $M>0$ the following holds:  if
\beq\label{ballom}\Omega = M\log\frac1{\tep\sqrt\O},\eeq
there exist $\ep_0>0$ such that for any $\ep<\ep_0$ and any $(\rie,\rje)$ defined on the unit ball $B$ such that  $F_\ep(|\rie|,|\rje|) < \ltep^4$,
\beq\label{lowsc} G_\ep(\Jie,\Jje,B) \ge \O |B| \ltep\(1 - C M^{-1/3}\),\eeq
where $\Jie$, $\Jje$ are defined in \eqref{jeps}.
\end{lemm}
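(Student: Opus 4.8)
The plan is to prove a lower bound on $G_\ep$ over the unit ball $B$, under the hypothesis that $F_\ep(|\rie|,|\rje|)$ is small (of order $\ltep^4$) and that $\O = M\log(1/\tep\sqrt\O)$. The mechanism is exactly the vortex-ball lower bound of Ginzburg-Landau theory from \cite{SSbook}, Chapter~7, and \cite{Serf}, applied separately to each of the two components, so I would be somewhat sketchy and cite those references for the technical vortex-ball constructions.

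First I would exploit the smallness of $F_\ep(|\rie|,|\rje|)$ to control the moduli $\rhoie = |\rie|$ and $\rhoje = |\rje|$ away from a small exceptional set, exactly as in the Case~A lower bound above. Using the coarea formula (as in \cite{SSbook}, Proposition~4.8) together with the bounds $|\nabla(\rhoie^2+\rhoje^2)|^2\le C(|\nabla\rhoie|^2+|\nabla\rhoje|^2)$ and $|\nabla(\rhoie^2\rhoje^2)|^2\le C(|\nabla\rhoie|^2+|\nabla\rhoje|^2)$, I would show that the bad sets where $|1-\rhoie^2-\rhoje^2|$ or $\rhoie^2\rhoje^2$ exceed a suitable threshold are contained in unions of balls of small total radius. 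On the complement, at each point either $\rhoie$ or $\rhoje$ is close to $1$; by a connectedness argument of the kind used above, on each connected component one of the two dominates, so the domain $B$ splits into a region where $\rhoie\simeq 1$ and $\rhoje\simeq 0$ and vice versa, reducing the problem on each region to a genuine \emph{one-component} Ginzburg-Landau problem with the effective vortex-core scale $\tep$.

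Next, on each such region I would run the standard vortex-ball construction and lower bound (\cite{SSbook}, Chapter~7, or \cite{Serf}), producing disjoint balls $B_i$ with small total radius, winding numbers $d_i$, and the lower bound $\int_{\cup B_i}\rhoie^2|\nabla\vpie-\O x^\perp|^2\ge \pi(\sum|d_i|)\ltep(1-o(1))$, together with the corresponding estimate in the $\rhoje$ region. Combining the vortex-energy contribution with the remaining ``free'' energy $\frac12\int |\Jie/\O|^2$ and optimizing the number and placement of vortices against the mean rotation $\O$ over the unit ball yields, after the usual computation, a leading term of the form $\O|B|\ltep$. The factor $1-CM^{-1/3}$ is precisely the loss incurred by replacing the exact optimal vortex density by the one available from the ball-construction at finite $M$; tracking it amounts to carrying the subleading term through the optimization, as is done in \cite{ss2}.

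The main obstacle, and the point where genuine care is required rather than citation, is the two-component bookkeeping: unlike the one-component case one must verify that the splitting of $B$ into the $\rhoie$-dominant and $\rhoje$-dominant regions is stable on the unit ball (where no minimality or global mass constraint is assumed, only the energy bound $F_\ep<\ltep^4$), and that the interface between these regions contributes negligibly to $G_\ep$ at the scale $\O\ltep$. I would handle this by noting that the interface length inside $B$ is controlled by $F_\ep$, hence by $\ltep^4$, which is far smaller than the target $\O|B|\ltep$, so the phase energy concentrated near the interface is lower-order and does not affect the constant. The explicit constant $C$ and the sharp exponent $M^{-1/3}$ should then emerge from balancing the vortex self-energy $\sim\ltep$ per vortex against the filling energy $\sim\O^2$ per unit area over the unit ball, exactly as in the references; I would expect the bulk of the remaining work to be routine once this decoupling into two one-component estimates is justified.
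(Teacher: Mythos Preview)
Your outline has the right ingredients but misidentifies where the work lies, and is too vague precisely at the point that carries the quantitative content of the lemma.

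First, the two-component bookkeeping you flag as the ``main obstacle'' is in fact a non-issue. Once the bad set $\{|1-\rhoie^2-\rhoje^2|>\ltep^{-1}\}\cup\{\rhoie^2\rhoje^2>\ltep^{-1}\}$ is covered by balls of total radius $O(\tep\ltep^6)$, the complement $B\setminus A_\ep$ is \emph{connected} (small disjoint closed balls removed from a disk), so the alternative ``$\rhoie\simeq 1$ or $\rhoje\simeq 1$'' is resolved globally: exactly one component dominates on all of $B\setminus A_\ep$, and there is no interface to worry about inside $B$. You then run the vortex-ball construction for that single component only. Your proposed splitting into two regions and interface-energy accounting is unnecessary.

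Second, and more importantly, the factor $1-CM^{-1/3}$ is not something that ``emerges from balancing'' or from tracking subleading terms in an optimization; it is the specific output of a cutoff argument that you do not describe. The paper first assumes without loss of generality that $G_\ep\le \pi\O\ltep\le CM\ltep^2$ (otherwise \eqref{lowsc} is trivial), which gives the crucial a priori bound $\|\Jie\|_{L^2(B)}\le CM^{1/2}\ltep$. One then takes a cutoff $\zeta$ equal to $1$ on the ball of radius $1-M^{-1/3}$, vanishing on $\partial B$, with $|\nabla\zeta|\le M^{1/3}$, and writes
\[
\Bigl|\int_B \zeta\,\curl\Jie\Bigr|=\Bigl|\int_B\nabla^\perp\zeta\cdot\Jie\Bigr|\le \|\nabla\zeta\|_{L^2}\|\Jie\|_{L^2}\le CM^{1/6}\cdot M^{1/2}\ltep=CM^{2/3}\ltep.
\]
Combined with the Jacobian estimate $\|\curl\Jie+2\O-\nue\|_{(C^{0,1})^*}\le C\ltep^{-6}$ and $\int_B\zeta\ge|B|(1-CM^{-1/3})$, this gives $\sum_i|d_i|\zeta(a_i)\ge \frac{\O|B|}{\pi}(1-CM^{-1/3})$, which plugged into the ball lower bound \eqref{lowbb} yields \eqref{lowsc}. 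The exponent $1/3$ is exactly the balance between the boundary-layer volume loss $M^{-a}$ and the integration-by-parts error $M^{a/2-1/2}$; it does not come from any optimization over vortex configurations. Your language of ``optimizing the number and placement of vortices'' is in any case misplaced for a lower bound: you do not get to choose the vortices, you must extract their total degree from the current via the Jacobian.
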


We postpone the proof of this lemma to the end of this section.

We consider minimizers $\{(\rie,\rje)\}_\ep$ of $\EOep$ and define $\roie$, $\roje$ as in Case~A  and the currents $\Jie$, $\Jje$ as in \eqref{jeps}. We use the same splitting of the energy \eqref{decomp} as in case~A.

\subsubsection*{Case~B}

The upper bound \eqref{upcaseBC} and  Proposition~\ref{proplinfinitybound}  imply that, in Case~B, we have  $\rie^2+\rje^2 - 1 < C\tep$. Hence, following case~A, any sequence $\{\ep\}$ converging to $0$ admits a subsequence (not relabeled)  such that  $\rhoie\to\chi_\oma$ and $\rhoje\to\chi_{\oma^c}$ for some  minimizer $\oma$ of \eqref{minper} and  for any $\eta>0$ we have
\beq  F_\ep(\rhoie, \rhoje,V_\eta) \ge \mep\la - C  \O \log\frac1{\tep\sqrt\O},\eeq
where $V_\eta$ denotes an $\eta$-neighbourhood of $\gamma_\alpha :=\partial\oma\cap D$. It follows that
\beq\label{upawayB} F_\ep(\rhoie, \rhoje,D\sm V_\eta) \le C  \O \log\frac1{\tep\sqrt\O},\quad G_\ep(\rie,\rje,D\sm V_\eta)\le C \O \log\frac1{\tep\sqrt\O}.\eeq
The right-hand side in these bounds is negligible compared to $\O^2$ when $\ltep\ll\O$. Thus it follows that both $\Jie/\O$ and $\Jje/\O$  converge to $0$ on $D\sm V_\eta$ as $\ep\to 0$. Since this is true for arbitrary $\eta>0$, they converge to $0$ on $D$. The rest of this section is devoted to the proof of \eqref{minB}.

We change scales in order to apply Lemma~\ref{lowresc} on the new scale. Given $\pep = \lambda\ep$, we have for any open set $\om\subset D$
\beq\label{enscale} \EOep(\rie,\rje,\om) = \pE(\prie,\prje\pom),\eeq
where
$$ \pE(\prie,\prje,\pom) = \sum_{k=1}^2 \int_{\pom}\frac 12 |\nabla u_{k,\ep}'-i\pO x^{\perp} u_{k,\ep}'|^2 + \frac 1 {4\pep^2} (1-|\prie|^2-|\prje|^2)^2
+\frac {1}{2\ptep^2} |\prie|^2 |\prje|^2$$
and
\beq\label{scalerel} \prie(x) = \rie(\lambda x),\ \prje(x) = \rje(\lambda x),\ \pom = \lambda\om,\ \pep = \lambda\ep,\ \ptep = \lambda\tep,\ \pO = \O/\lambda^2.\eeq
Note that $\ptep\sqrt\pO = \tep\sqrt\O$.  For any $M$, we define  $\lamep$ to be such that
\beq \label{choicescale}
\frac\O{\lamep^2} = M\log\frac1{\tep\sqrt\O},\eeq
then
$$\pO = M\log\frac1{\ptep\sqrt\pO}.$$
In cases~B and~C of Theorem~\ref{thmin}, we have $\ltep\ll\O\ll1/\tep^2$, so that  $1\ll \lambda$ and $\tep \sqrt{\pO}\to 0$. Therefore,  $\ptep\ll 1$ as $\ep\to 0$ and $\pO\simeq M\lptep$.

If we define the recaled currents $\pjie$, $\pjje$ as in \eqref{jeps}, replacing there $\rie$, $\rje$, $\O$ by $\prie$, $\prje$, $\pO$, and if we  let $\proie = |\prie|$ and $\proje = |\prje|$,  then the rescaled energy splits in a similar fashion to \eqref{decomp}, \eqref{split} as
\beq\label{pdecomp} \pE(\prie,\prje) = F_\pep(\proie,\proje) + G_\pep(\rie,\rje), \eeq
where $F_\pep$, $G_\pep$ are defined as in \eqref{split}.

We are now ready to bound from below $G_\ep(\rie,\rje)$ on $D_1 = \oma\sm V_\eta$ and $D_2 = \oma^c\sm V_\eta$. Fom Fubini's Theorem and using the above rescaling we have
\beq\label{fubi} G_\ep(\rie,\rje, D_1) = \int_{x\in\R^2}\frac{\lamep^2}\pi G_\pep(\prie,\prje,B_{\lamep x}\cap D_1')\,dx,\eeq
where $B_{\lamep x}$ denotes the unit ball centered at $\lamep x$ and $D_1' = \lamep D_1$. A similar identity hold for $F_\ep$, and also when replacing $D_1$ with $D_2$. In particular,  using \eqref{upawayB}  and \eqref{scalerel}, we have
\beq\label{fubiscal} \frac\pi{\lamep^2} F_\ep(\rie,\rje, D_1) = \int_{x\in\R^2}F_\pep(\prie,\prje,B_{\lamep x}\cap D_1')\,dx \le C\pO\log\frac1{\ptep\sqrt\pO}.\eeq
Let  $$ A = \{x\in D_1\mid\text{$B(x,1/\lamep)\subset D_1)$ and $F_\pep(\proie,\proje, B_{\lamep x})\le \lptep^4$}\}.$$
From the definition of $A$ and \eqref{choicescale} we may apply Lemma~\ref{lowresc} for each $x\in A$ on the ball $B(\lamep x, 1)$ to the rescaled configuration $(\prie, \prje)$. Then inserting the lower-bound \eqref{lowsc} in \eqref{fubi} we find
$$ G_\ep(\rie,\rje, D_1) \ge |A| \frac{\lamep^2}\pi\pO |B| \lptep\(1 - C M^{-1/3}\) = |A| \O \lptep\(1 - C M^{-1/3}\).$$
Using the fact that  $1\ll\lamep$ and using \eqref{fubiscal} we deduce that $|A|\simeq |D_1|$ as $\ep\to 0$. Moreover,   the fact that $\pO\simeq M\lptep$,  implies that
$$\lptep\simeq \log \frac1{\ptep\sqrt\pO} = \log \frac1{\tep\sqrt\O}.$$
It follows that, as $\ep\to 0$,
$$ G_\ep(\rie,\rje, D_1) \ge  |D_1| \O \log \frac1{\tep\sqrt\O}\(1 - C M^{-1/3} - o(1)\).$$
Summing with the corresponding inequality on $D_2$, using the fact that $M$ can be chosen arbitrarily large, and as in case~A using the fact we can choose the size $\eta$ of the neighbourhood of the interface $V_\eta$ arbitrarily small, we deduce that, as $\ep\to 0$,
\beq\label{lowGB} G_\ep(\rie,\rje, D) \ge  |D| \O \log \frac1{\tep\sqrt\O}\(1 - o(1)\).\eeq
We add to the above the lower bound $F_\ep(\rie,\rje)\ge \mep\la - C\ltep$ which follows from Lemma~\ref{mepmae}, to obtain  the lower bound part of \eqref{minB}.

\subsubsection*{Case~C} Case~C  is  simpler than case~B. Using the same rescaling and using the same notation as above we have, using the fact that now the interface energy is negligible compared to $\O\log(1/\tep\sqrt\O)$,
$$\frac\pi{\lamep^2} F_\ep(\rie,\rje, D) = \int_{x\in\R^2}F_\pep(\prie,\prje,B_{\lamep x}\cap D)\,dx \le C\pO\log\frac1{\ptep\sqrt\pO}.$$
Then we let $A$ be the set of $x$ such that $B(x,1/\lamep)\subset D)$ and $F_\pep(\proie,\proje, B_{\lamep x})\le \lptep^4.$ As above $|A|\simeq |D|$ so that if we  apply Lemma~\ref{lowresc} for each $x\in A$ on the ball $B(\lamep x, 1)$ to the rescaled configuration $(\prie, \prje)$ we  find, as $\ep\to 0$,
$$G_\ep(\rie,\rje, D) \ge  |D| \O \log \frac1{\tep\O}\(1 - C M^{-1/3} - o(1)\).$$
Using the fact that $M$ can be chosen arbitrarily large and that $G_\ep\le \EOep$ we deduce that \eqref{minC} holds.

 \begin{proof}[Proof of Lemma~\ref{lowresc}]  We assume in this proof that
 \beq\label{aprio}G_\ep(\Jie,\Jje) \le \pi\O\ltep\le CM\ltep^2,\eeq
 otherwise there is nothing to prove. Here the second inequality is an easy consequence of \eqref{ballom}.

 To prove the Lemma, we first proceed as in case~A to construct vortex balls. Using the bound
 $$\int_B |\nabla \rhoie|^2+|\nabla \rhoje|^2 + \frac1{\tep^2} \rhoie^2 \rhoje^2  \le C\ltep^4,$$
  the set
 $$ \{x\in B\mid \text{ $\rhoie^2 \rhoje^2 > \ltep^{-1}\}$ or $|1 - \rhoie^2 - \rhoje^2\ge \ltep^{-1}$}\}$$
 may be  covered by  the union $A_\ep$ of a finite number of closed disjoint balls whose sum of radii is bounded by $C\tep\ltep^6$. Then $B_\ep = B\sm A_\ep$ is a connected set and such that either $|1 - \rhoie^2| \le C\ltep^{-1}$ or $|1 - \rhoje^2| \le C\ltep^{-1}$ on $B_\ep$. Without loss of generality, we assume that $|1 - \rhoie^2| \le C\ltep^{-1}$ on $B_\ep$.
 From here, the vortex-ball construction (see  \cite{SSbook}, Chapter~7 or \cite{Serf}) implies  that $A_\ep$ may be included in a union of disjoint closed balls $B_1$,\dots,$B_k$ with total radius bounded by $C\ltep^{-10}$  in such a way  that denoting by $d_i$ the winding number of $\rie$ on $\partial B_i$, with $d_i$ set to $0$ if $B_i$ intersects the complement of $B$, we have
 \beq\label{lowbb}\int_{\cup_i B_i} \frac{|\Jie|^2}{\rhoie^2}\ge \pi  \(\sum_{i=1}^k |d_i| \)\( \ltep - C\log\ltep\).\eeq
 Moreover, the estimate on the sum of the radii of the balls $B_i$ ensures (see \cite{SSbook}, chapter~6 or \cite{Serf}) that, as $\ep \to 0$ and in the sense of distributions,
 \beq\label{jacb}\left\|\curl \Jie +2\O - \nue\right\|_{(C^{0,1})^*}\le C \ltep^{-6},\quad\text{where}\quad \nue = 2\pi\sum_i d_i\delta_{a_i},\eeq
 and where  $a_i$ is the center of $B_i$.

  Next we use \eqref{jacb} to estimate the sum of degrees in \eqref{lowb}, which will yield the desired result. Let $0\le\zeta\le 1$ be a function equal to $1$ on the ball of radius $1 - M^{-1/3}$, equal to $0$ on $\partial B$, and such that $|\nabla \zeta| \le M^{1/3}$. Then we have, using \eqref{aprio}, that
  $$\left|\int_B \zeta\curl \Jie \right| = \left|\int_B \nabla^\perp\zeta\cdot\Jie\right|\le CM^{1/6} \|\Jie\|_{L^2(B)} \le CM^{2/3}\ltep.$$
  Then, from \eqref{jacb},
  $$ \left|\int_B \zeta\(\curl \Jie +2\O - \nue\)\right| \le C\ltep^{-5}.$$
  We deduce that
  $$2\pi \sum_i d_i\zeta(a_i) \ge 2\O \int_B\zeta -  C\ltep^{-5} - CM^{2/3}\ltep,$$
  and then using the fact that $\O \simeq M\ltep$,  that when $\ep$ is small enough we have
  $$2\pi \sum_i d_i\zeta(a_i) \ge 2\O |B| (1 - CM^{-1/3}).$$
  Inserting in \eqref{lowbb} yields the desired result.
  \end{proof}

\section{Localisation of the line energy}

 We recall the definition of the energy $F_\ep$ of a pair $\ri, \rj:D\to\R$ by \eqref{scalar_energy}, with \eqref{vep1}
  where it is understood that $\delta$ is a function of $\ep$. We recall the definition of \eqref{mae}.

\subsection{Localisation of perimeter}

We start with the following quantitative convergence result for the perimeter.

\begin{prop} \label{locper} Let $D$ be a bounded smooth domain in $\R^2$ and $\alpha\in (0,1)$. Then for any $\eta>0$ there exists $C>0$ such that if $\om\subset D$ is such that
$ |\om| = \alpha |D|$, then there exists a minimizer $\om_\alpha$ of \eqref{minper} such that
\beq\label{boundper} \per_{D\setminus V_\eta}(\om) \le C \(\per_D(\om) - \la\),\eeq
where we denoted by $V_\eta$ a $\eta$-neighbourhood of the curve $\gamma_\alpha = D\cap \partial\om_\alpha$.
\end{prop}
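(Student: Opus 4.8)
The plan is to argue by contradiction and to extract the quantitative estimate from a calibration attached to a minimal interface. Suppose \eqref{boundper} fails for some fixed $\eta>0$: then for each $n$ there is a set $\om_n\subset D$ with $|\om_n| = \alpha|D|$ such that, for \emph{every} minimizer $\om_\alpha$ of \eqref{minper}, one has $\per_{D\setminus V_\eta}(\om_n) > n\,(\per_D(\om_n) - \la)$, where $V_\eta$ is the $\eta$-neighbourhood of $\partial\om_\alpha\cap D$. Since $\per_{D\setminus V_\eta}(\om_n)\le\per_D(\om_n)$, this forces $\per_D(\om_n) > n(\per_D(\om_n) - \la)$, hence $\per_D(\om_n) < \tfrac{n}{n-1}\la$ and in particular $\per_D(\om_n)\to\la$. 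By compactness in $BV$, after extracting a subsequence, $\om_n\to\om_\infty$ in $L^1$ with $|\om_\infty| = \alpha|D|$, and lower semicontinuity together with $\per_D(\om_n)\to\la$ forces $\om_\infty$ to be a minimizer of \eqref{minper}; in particular $\gamma_\infty := \partial\om_\infty\cap D$ is a smooth curve of constant curvature meeting $\partial D$ orthogonally.

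I would first record the qualitative concentration this produces: since $\om_n\to\om_\infty$ in $L^1$ and $|D\chi_{\om_n}|(D)\to|D\chi_{\om_\infty}|(D)$, the perimeter measures converge strictly, so $|D\chi_{\om_n}|\rightharpoonup |D\chi_{\om_\infty}| = \mathcal H^1\llcorner\gamma_\infty$ weakly-$*$; lower semicontinuity on the open set $V_{\eta/2}$ together with $\gamma_\infty\subset V_{\eta/2}$ and $|D\chi_{\om_\infty}|(D)=\la$ then yields $\per_{D\setminus V_{\eta/2}}(\om_n)\to 0$. This confirms that the interface of $\om_n$ concentrates near $\gamma_\infty$, but gives no rate and hence no contradiction by itself; the quantitative content must come from a hard inequality, which is the point of the next step.

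The heart of the argument is a \emph{calibration} adapted to a minimizer close to $\om_n$. For each $n$ I would select a minimizer $\om_\alpha^n$ best fitting $\om_n$ (say, minimising $|\om_n\triangle\om_\alpha|$ among minimizers), which is legitimate because the contradiction hypothesis applies to every minimizer, in particular to $\om_\alpha^n$. Writing $\gamma^n = \partial\om_\alpha^n\cap D$ with constant curvature $H$, I would build a vector field $X_n$ on $\overline D$ with $|X_n|\le 1$, $\diver X_n = H$ in $D$, $X_n\cdot\nu_{\partial D} = 0$ on $\partial D$, and $X_n = \nu_{\om_\alpha^n}$ on $\gamma^n$, obtained by extending the unit normal along the foliation of a tubular neighbourhood of $\gamma^n$ and interpolating to a strictly sub-unit field farther away. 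The free-boundary condition on $\partial D$ makes the flux $\int_{\partial^*\om\cap D}X_n\cdot\nu_\om = \int_\om\diver X_n = H\alpha|D| = \la$ independent of the competitor, so that
\beq\label{calibplan}
\per_D(\om_n) - \la = \int_{\partial^*\om_n\cap D}(1 - X_n\cdot\nu_{\om_n})\,d\mathcal H^1 \ge \int_{(\partial^*\om_n\cap D)\setminus V_\eta}(1 - |X_n|)\,d\mathcal H^1.
\eeq
If $X_n$ can be arranged so that $1 - |X_n|\ge c(\eta) > 0$ on $D\setminus V_\eta$ uniformly in $n$, then \eqref{calibplan} reads $\per_D(\om_n) - \la \ge c(\eta)\,\per_{D\setminus V_\eta}(\om_n)$, contradicting $\per_{D\setminus V_\eta}(\om_n) > n(\per_D(\om_n)-\la)$ for $n>1/c(\eta)$ and proving \eqref{boundper} with $C = 1/c(\eta)$.

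The main obstacle is exactly the construction of this calibration with a \emph{strict} gap $|X_n|\le 1-c(\eta)$ outside $V_\eta$, uniformly in $n$. Strict stability of the minimal interface makes such a field available transverse to $\gamma^n$, but the difficulty is genuine when the minimizer is degenerate (for instance a diameter of a disk, where translating or rotating the interface costs no perimeter): along these flat directions any normal extension saturates $|X| = 1$, so a calibration attached to $\om_\infty$ alone would give $c=0$. This is precisely why $\om_\alpha$ is allowed to depend on $\om$ in the statement and why the best-fitting selection is essential: it absorbs the flat component of the deviation, leaving only fluctuations that do create perimeter away from the interface and are therefore detected by $1-|X_n|$. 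I would make this quantitative by combining the selection with the second variation of the perimeter restricted to directions orthogonal to the family of minimizers, and by using that the set of minimizers of \eqref{minper} is compact with uniform smoothness and curvature bounds for the $\gamma^n$, which delivers the constant $c(\eta)>0$ uniformly in $n$.
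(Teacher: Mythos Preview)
Your calibration identity is wrong, and this is a genuine error, not a detail. You claim $\int_{\partial^*\om\cap D}X_n\cdot\nu_\om = H\alpha|D| = \la$, but $H\alpha|D| = \la$ fails already in the simplest example: take $D$ the unit disk and $\alpha = 1/2$, so the minimizer is a half-disk with interface a diameter. Then $H=0$ while $\la = 2$. More generally the curvature $H$ of a constrained isoperimetric interface is a Lagrange multiplier, not the ratio $\la/(\alpha|D|)$, so a field with $\diver X = H$ does \emph{not} produce flux $\la$ through every competitor. To make \eqref{calibplan} an equality you would need $\diver X_n$ equal to the constant $\la/(\alpha|D|)$, together with $|X_n|\le 1$, $X_n=\nu$ on $\gamma^n$, $X_n\cdot\nu_{\partial D}=0$, and the strict gap $|X_n|\le 1-c(\eta)$ off $V_\eta$. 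The existence of such a calibration for the \emph{relative} volume-constrained problem in a general smooth domain is far from automatic; it is a global construction that can fail, and you have not produced one even in the model cases. Your last paragraph acknowledges the degenerate case but only gestures at second variation; this does not yield the vector field you need.

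By contrast, the paper avoids calibrations entirely. It argues by contradiction with a minimizing sequence as you do, but then uses only two soft facts: the isoperimetric profile $\alpha\mapsto\la$ is locally Lipschitz, and the relative isoperimetric inequality $|\om|\le C\,\ell(\partial\om\cap D)^2$. A coarea/mean-value argument produces a level curve $\partial V_{t_n}$ of the distance to $\om_\alpha$ that $\gamma_n$ does not cross; splitting $\om_n$ into $A_n=\om_n\cap V_{t_n}$ and $B_n=\om_n\setminus V_{t_n}$, one bounds $\ell_{|A_n|/|D|}$ from below by $\la - C|B_n|$ (Lipschitz profile) and $|B_n|$ from above by $C\ell(\gamma_n\cap V_{t_n}^c)^2$ (relative isoperimetric inequality), and the two combine to give $\ell(\gamma_n\cap V_{t_n}^c)\le C(\ell(\gamma_n)-\la)$ directly. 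This route is elementary and sidesteps exactly the global existence issue that blocks your argument.
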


\begin{proof}
We will prove the equivalent statement that, under the hypothesis of the proposition and given $\eta>0$,  if   $\{\om_n\}_n$ is a minimizing sequence for \eqref{minper}  then there exists a subsequence $\{n'\}$,  a minimizer $\om_\alpha$ of \eqref{minper}, and $C>0$  such that  if $n'$ is large enough (depending on $\eta>0$) then
$$ \per_{D\setminus V_\eta}(\om_{n'}) \le C \(\per_{D}(\om_{n'}) - \la\).$$

It is well known that if $\{\om_n\}_n$ is a minimizing sequence for \eqref{minper}, then there exists a subsequence $\{n'\}$ such that $\{\chi_{\om_{n'}}\}_{n'}$ converges weakly in $BV$ and strongly in $L^1$ to $\chi_{\om_\alpha}$, where $\om_\alpha$ is a minimizer \cite{giusti}. From now on we label $\{n\}$ the subsequence to lighten notation.

Let $V_t = \{x\in D\mid d(x,\om_\alpha) < t\}$  and $W_t = \{x\in D\mid d(x,\om_\alpha^c) < t\}$. By taking $\eta$ smaller if necessary, we may assume that  there exists a positive lower bound for the length of the connected components of $\partial V_t\cap D$ and  $\partial W_t\cap D$ for each $t\in (0,\eta)$.

Now, using the above convergence,
$$\lim_{n\to +\infty} \int_{V_\eta\sm V_{\eta/2}} |\nabla \chi_{\om_n}| = 0.$$
Moreover, let  $\gamma_n :=\partial \om_n\cap D$, using the coarea formula we have, ,
$$ \int_{V_\eta\sm V_{\eta/2}} |\nabla \chi_{\om_n}| \ge \int_{\eta/2}^\eta \#(\gamma_n\cap \partial V_t)\,dt,$$
where $\#A$ is the cardinal of $A$. It follows from the above and a mean value argument that for $n$ large enough, there exists $t_n\in(\eta/2, \eta)$ such that $\gamma_n\cap \partial V_n = \varnothing$, where we wrote $V_n$ for $V_{t_n}$.
Moreover,  from the $L^1$ convergence of $\chi_{\om_n}$ to $\chi_{\om_\alpha}$ we may assume --- by using again a mean value argument to determine $t_n$ ---  that
\beq\label{ccbord}\lim_{n\to +\infty} \ell(\om_\alpha\cap \partial V_n) = 0.\eeq

From $\gamma_n\cap \partial V_n = \varnothing$ we deduce that if $n$ is large enough, then each connected component of $D\cap \partial V_n$ is either included  in $\om_n$ or in $\om_n^c$. The former is not possible if $n$ is large because it would imply a lower-bound for $\ell(\om_\alpha\cap \partial V_n)$ contradicting \eqref{ccbord}. Therefore
$$ \partial V_n \subset \om_n^c.$$

Now let $A_n = \om_n\cap V_n$, $B_n = \om_n\cap V_n^c$. Then,
\beq\label{setup} \alpha |D| = |A_n| + |B_n|, \quad \ell(\gamma_n\cap V_n^c) = \ell(\partial B_n\cap D),\eeq
while $|B_n|\to 0$ as $n\to +\infty$ from the $L^1$ convergence.

We now use two well-known facts about isoperimetric problems in two dimensions (see for instance \cite{ros}). First, the function $\alpha\to\la$ is locally lipschitz on the interval $(0,1)$ and second, there exists a constant $C$ depending on the smooth domain $D$ such that for any $\om\subset D$ we have $|\om|\le C \ell(\om\cap D)^2.$ In view of \eqref{setup} we deduce that
\begin{align*}
\la - C \ell(\gamma_n\cap V_n^c)^2 &= \la - C \ell(\partial B_n\cap D)^2  \le \la - C \frac{|B_n|}{|D|} \\
 & \le \ell_{|A_n|/|D|} \le \ell(\partial A_n\cap D) = \ell(\gamma_n\cap V_n) +\ell(\om_n\cap\partial V_n)\\
 & = \ell(\gamma_n\cap V_n) = \ell(\gamma_n) - \ell(\gamma_n\cap V_n^c).
 \end{align*}
We deduce that
$$ \ell(\gamma_n\cap V_n^c)  - C \ell(\gamma_n\cap V_n^c)^2 \le \ell(\gamma_n) - \ell(\gamma_\alpha),$$
and then, using the fact that $\ell(\gamma_n) - \la$ tends to $0$ as $n\to +\infty$, that for $n$ large enough
$$ \ell(\gamma_n\cap V_n^c)  \le C\( \ell(\gamma_n) - \ell(\gamma_\alpha)\),$$
where if fact the constant can be taken as close to $1$ as one wishes.

We may similarly find $t_n\in(\eta/2,\eta)$ such that, letting $W_n = W_{t_n}$ we have, for $n$ large enough,
$$ \ell(\gamma_n\cap W_n^c)  \le C\( \ell(\gamma_n) - \ell(\gamma_\alpha)\).$$

It follows then from  $V_\eta^c\subset V_{n}^c\cup W_{n}^c$ that
$$\ell(\gamma_n\cap V_\eta^c) \le \ell(\gamma_n\cap V_n^c) + \ell(\gamma_n\cap W_n^c) \le C\( \ell(\gamma_n) - \ell(\gamma_\alpha)\),$$
proving \eqref{boundper} and  the proposition.
\end{proof}

\subsection{Lower bound from perimeter}
 Here we restate a result of P.Sternberg \cite{sternberg}.

In what follows we are given a two-well potential $W:\R^n\to \R$, where $n$ is a positive integer, such that $W$ is, say, $C^2$, nonnegative, and vanishes at exactly two points $a$ and $b$ where we assume moreover that the hessian of $W$ is positive definite.

We define for any $x\in\R^n$
\beq d(x,a) = \inf\left\{\int_{-\infty}^0 \dm|\gamma'(t)|^2 + W(\gamma(t))\,dt\mid \gamma:\R_-\to\R^n,\ \lim_{-\infty}\gamma = a,\ \gamma(0) = x\right\}.\label{da}\eeq
$$ d(x,b) = \inf\left\{\int_0^{+\infty} \dm|\gamma'(t)|^2 + W(\gamma(t))\,dt\mid \gamma:\R_+\to\R^n,\ \lim_{+\infty}\gamma = b,\ \gamma(0) = x\right\}.$$
$$ d(a,b) = \inf\left\{\int_{-\infty}^{+\infty} \dm|\gamma'(t)|^2 + W(\gamma(t))\,dt\mid \gamma:\R_+\to\R^n,\ \lim_{-\infty}\gamma = a,\ \lim_{+\infty}\gamma = b\right\}.$$
and we let
\beq\label{dv} d(x) = \begin{cases} d(x,a) & \text{if $d(x,a)< d(a,b)/2$,} \\
                                                          d(a,b) - d(x,b) & \text{if $d(x,b)< d(a,b)/2$,} \\
                                                          d(a,b)/2 & \text{otherwise.}
                                     \end{cases}
\eeq
Note that with  this notation $d(b)$ is the same as $d(a,b)$.

\begin{prop} (\cite{sternberg}) Given a smooth bounded domain $D\subset \R^2$, for any $u:D\to\R^n$ we have
$$F(u) := \int_D \dm|\nabla u(x)|^2 + W(u(x))\,dx \ge \int_0^{d(a,b)} \per_D\(\left\{x\in D\mid d(u(x)) < t\right\}\)\,dt.$$
\end{prop}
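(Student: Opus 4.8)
The plan is to collapse the vector-valued density into a scalar one by composing $u$ with the weight $d$ of \eqref{dv}, and then to read off the right-hand side from the coarea formula. We may assume $F(u)<\infty$, so that $u\in H^1(D,\R^n)$. The entire statement reduces to the pointwise inequality
$$\dm|\nabla u(x)|^2 + W(u(x)) \ge |\nabla(d\circ u)(x)| \qquad\text{for a.e. } x\in D.$$
Indeed, once this holds, set $\phi := d\circ u$; it is bounded with values in $[0,d(a,b)]$, so integrating over $D$ and applying the coarea formula to $\phi$ gives
$$F(u)\ge\int_D|\nabla\phi| = \int_0^{d(a,b)}\per_D\(\{\phi<t\}\)\,dt,$$
where the range of $t$ has been cut to $[0,d(a,b)]$ because $\{\phi<t\}$ is empty for $t\le 0$ and equals $D$ (hence has zero perimeter relative to $D$) for $t>d(a,b)$. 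Since $\{\phi<t\}=\{x\mid d(u(x))<t\}$, this is precisely the claimed bound.

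To obtain the pointwise inequality I would first reinterpret the geodesic quantities $d(\cdot,a)$ and $d(\cdot,b)$ metrically. By the arithmetic--geometric inequality $\dm|\gamma'|^2+W(\gamma)\ge\sqrt{2W(\gamma)}\,|\gamma'|$ along any path, with equality attainable after an arclength reparametrisation, one identifies
$$d(y,a)=\inf\left\{\int\sqrt{2W(\gamma)}\,|\gamma'|\,dt \ \Big|\ \gamma \text{ runs from } a \text{ to } y\right\},$$
that is, $d(\cdot,a)$ is the geodesic distance to $a$ for the degenerate conformal metric $\sqrt{2W(y)}\,|dy|$, and likewise for $d(\cdot,b)$. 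The nondegeneracy of the Hessians at $a,b$ guarantees $0<d(a,b)<\infty$ and that these distances are finite and locally Lipschitz. As the distance function of any locally bounded length metric, $d(\cdot,a)$ obeys the eikonal inequality $|\nabla_y d(y,a)|\le\sqrt{2W(y)}$ at each point of differentiability, and similarly for $d(\cdot,b)$.

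Next I would verify that the truncated function $d$ of \eqref{dv} inherits this bound. The triangle inequality $d(a,b)\le d(y,a)+d(y,b)$, obtained by concatenating paths, shows that the first two alternatives in \eqref{dv} are mutually exclusive; hence $d$ is well defined, continuous across the interfaces $\{d(\cdot,a)=d(a,b)/2\}$ and $\{d(\cdot,b)=d(a,b)/2\}$, and locally Lipschitz. On the three regions $|\nabla_y d|$ equals $|\nabla_y d(\cdot,a)|$, $|\nabla_y d(\cdot,b)|$, or $0$, so $|\nabla_y d(y)|\le\sqrt{2W(y)}$ holds a.e. Composing the Lipschitz map $d$ with $u\in H^1$, the chain rule gives $\nabla(d\circ u)=(\nabla_y d)(u)\,\nabla u$ a.e., whence
$$|\nabla(d\circ u)|\le|(\nabla_y d)(u)|\,|\nabla u|\le\sqrt{2W(u)}\,|\nabla u|\le\dm|\nabla u|^2+W(u),$$
using Cauchy--Schwarz and a final arithmetic--geometric step. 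This is exactly the pointwise inequality.

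I expect the only delicate point to be the rigorous chain rule together with retention of the \emph{sharp} gradient bound: one must ensure that $u$ does not charge the Lebesgue-null set where $d$ fails to be differentiable, and that after composition one keeps the weight $\sqrt{2W(u)}$ rather than merely a crude global Lipschitz constant of $d$. Both are covered by the standard theory of compositions of Lipschitz with Sobolev functions, so no genuinely new difficulty arises; the mathematical content of the argument is the metric reinterpretation of $d$ and the resulting eikonal inequality.
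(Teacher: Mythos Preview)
Your proposal is correct and follows essentially the same route as the paper: establish the pointwise eikonal bound $|\nabla d|\le\sqrt{2W}$ for the function $d$ of \eqref{dv}, combine it with the chain rule and the elementary inequality $\sqrt{2W(u)}\,|\nabla u|\le \dm|\nabla u|^2+W(u)$, and finish with the coarea formula. The paper's accompanying Lemma actually proves the stronger equality $|\nabla d|=\sqrt{2W}$ a.e.\ off the level set $\{d=d(a,b)/2\}$ by exhibiting a minimizing path through $x$, but only the inequality you prove is needed for the proposition itself; your metric/eikonal reading of $d(\cdot,a)$ and $d(\cdot,b)$ is a perfectly good way to obtain it, and you correctly flag the one genuinely delicate point (keeping the sharp weight $\sqrt{2W(u)}$ through the Lipschitz--Sobolev chain rule).
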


We include a sketch of the proof for the convenience o the reader.

\begin{lemm} The function  $x\to d(x)$ is locally lipschitz on $\R^n$ and $d(x)\in [0,d(a,b)]$ for any $x$. Morever  $|\nabla d(x)|  =  \sqrt{2 W(x)}$
a.e. on the set of $x$ such that $d(x) \neq d(a,b)/2$.
\end{lemm}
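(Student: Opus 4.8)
The plan is to reduce all three one-sided infima to a single degenerate geodesic distance, from which both the Lipschitz regularity and the eikonal identity follow. The first step uses the pointwise inequality $\dm|\gamma'|^2 + W(\gamma) \ge |\gamma'|\sqrt{2W(\gamma)}$, with equality exactly when $|\gamma'|^2 = 2W(\gamma)$. The right-hand side integrates to the length $\int \sqrt{2W(\gamma)}\,|\gamma'|\,dt$ of $\gamma$ in the conformal metric with line element $\sqrt{2W}\,|dx|$, and this length is invariant under reparametrisation. Hence, reparametrising any admissible curve to make the inequality an equality (the infinite interval $\R_-$ accommodates the degeneration of the optimal speed near the well $a$, where $W$ vanishes), one gets $d(x,a) = \rho(a,x)$, where $\rho$ is the geodesic distance attached to $\sqrt{2W}\,|dx|$, and likewise $d(x,b) = \rho(b,x)$ and $d(a,b) = \rho(a,b)$.

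Since $\rho$ is a genuine length distance, I would record the triangle inequality $\rho(a,b) \le \rho(a,x) + \rho(b,x)$ and the bound $\rho(x,y) \le \sqrt{2\sup_K W}\,|x-y|$ for $x,y$ in a convex compact $K$, obtained by taking the straight segment as competitor. The second bound shows that $x\mapsto d(x,a)$ and $x\mapsto d(x,b)$ are locally Lipschitz. The triangle inequality shows the first two cases of \eqref{dv} are mutually exclusive, so $d$ is well defined, and that $d$ has the representation $d(x) = \mathrm{med}\{d(x,a),\ d(a,b) - d(x,b),\ d(a,b)/2\}$ as the median of three functions, as one checks in each of the three regimes of \eqref{dv}. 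As a median of locally Lipschitz functions $d$ is locally Lipschitz, and since each of the three entries lies in $[0,d(a,b)]$ so does $d(x)$.

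There remains the eikonal identity. On the open set $\{d(x,a) < d(a,b)/2\}$ one has $d = d(\cdot,a)$ and on $\{d(x,b) < d(a,b)/2\}$ one has $d = d(a,b) - d(\cdot,b)$, so it suffices to prove $|\nabla \rho(a,\cdot)| = \sqrt{2W}$ a.e. (and the analogue at $b$); on the remaining set $\{d = d(a,b)/2\}$ no claim is made, consistent with the statement. Writing $u = \rho(a,\cdot)$, the bound $|\nabla u|\le \sqrt{2W}$ holds at every point of differentiability: for $y = x + te$ the straight segment gives $u(y) - u(x) \le \rho(x,y) \le t\sqrt{2W(x)} + o(t)$, so every directional derivative is at most $\sqrt{2W(x)}$.

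The reverse inequality $|\nabla u|\ge\sqrt{2W}$ at differentiability points $x_0$ with $W(x_0)>0$ is where the real content lies, and I would obtain it by backtracking along an optimal (or, after an $\varepsilon$-approximation, near-optimal) path. Existence of an optimal path follows from Arzel\`a--Ascoli applied to the arclength reparametrisation together with lower semicontinuity of length; along it the additivity $u(x_0) = u(y) + \rho(y,x_0)$ holds for $y$ between $a$ and $x_0$. Letting $y\to x_0$ along the path and combining $u(x_0)-u(y) = \nabla u(x_0)\cdot(x_0-y)+o(|x_0-y|) \le |\nabla u(x_0)|\,|x_0-y| + o(|x_0-y|)$ with the elementary lower bound $\rho(y,x_0)\ge \big(\min_{B(x_0,r)}\sqrt{2W}\big)|x_0-y| = (1-o(1))\sqrt{2W(x_0)}\,|x_0-y|$ and dividing by $|x_0-y|$ yields $\sqrt{2W(x_0)}\le |\nabla u(x_0)|$, hence equality. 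The main obstacle is precisely this additivity/optimality step; the near-optimal variant replaces it by bookkeeping an error $\varepsilon = o(|x_0-y|)$, and one may alternatively identify $u$ as the maximal viscosity subsolution of $|\nabla u| = \sqrt{2W}$, $u(a)=0$, for which the a.e. identity is classical Hamilton--Jacobi theory. Finally, by Rademacher's theorem $u$ is differentiable a.e., and since $W$ vanishes only at $a$ and $b$, the identity $|\nabla d| = \sqrt{2W}$ holds a.e. on $\{d\neq d(a,b)/2\}$, completing the sketch.
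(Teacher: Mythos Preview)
Your argument is correct and the underlying ideas---upper bound on $|\nabla d|$ via short straight segments, lower bound via backtracking along an optimal path---are the same as in the paper, but the framing is genuinely different. You first identify $d(\cdot,a)$ and $d(\cdot,b)$ with the geodesic distances $\rho(a,\cdot)$, $\rho(b,\cdot)$ in the degenerate conformal metric $\sqrt{2W}\,|dx|$, and then everything (Lipschitz regularity, the range $[0,d(a,b)]$, the eikonal identity) follows from standard metric properties: the triangle inequality, local comparison with segments, and additivity along geodesics. The paper instead works directly with the action functional: it takes a minimizer $\gamma$ for $d(x,a)$, extends it by a short segment to get $d(x+h,a)\le d(x,a)+\sqrt{2W(x)}\,|h|+o(|h|)$, and for the reverse inequality uses the Euler--Lagrange equation $\gamma''=\nabla W(\gamma)$ and the resulting energy conservation $|\gamma'|^2=2W(\gamma)$ to compute the rate of change of $d$ along $\gamma$. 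Your route is more structural and never touches the ODE; the paper's is more hands-on. Your median representation $d(x)=\mathrm{med}\{d(x,a),\,d(a,b)-d(x,b),\,d(a,b)/2\}$ and the observation that the first two cases of \eqref{dv} are mutually exclusive (via the triangle inequality for $\rho$) are clean additions not made explicit in the paper.
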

\begin{proof}[Proof (Guy Barles, oral communication).] Given $x\in\R^n$ such that $d(x,a)< d(a,b)/2$ it is not difficult to prove by the direct method that the infimum defining $d(x,a) $ is acheived by a certain $\gamma$. Then, given $h\in\R^n$, we extend $\gamma$ by letting
$$\gamma(t) = x + \sqrt{2 W(x)} t \frac{h}{|h|},\quad 0\le t\le \frac{|h|}{\sqrt{2 W(x)}}.$$
then $\gamma$ connects $a$ to $x+h$ and using it as a test path in the definition of $d(x,a)$ we find that
\beq \label{pathbound} d(x+h,a) \le d(x,a) + \sqrt{2 W(x)} h + o(|h|).\eeq
This shows that $x\to d(x,a)$ is locally lipschitz and, using a similar argument, we fins that $x\to d(x,b)$ is lipschitz as well, which then implies that $x\to d(x)$ is locally lipschitz too.

Using Rademacher's theorem, the function $d$ is then differentiable almost everywhere and \eqref{pathbound} together with its equivalent for $d(x+h,b)$  shows that $|\nabla d(x)|  \le  \sqrt{2 W(x)}$ at any $x$ where $d$ is differentiable. To prove the converse inequality we consider again some $x\in\R^n$ such that $d(x,a) < d(a,b)/2$ and a minimizer $\gamma$. Then $\gamma$ is smooth and satisfies $\gamma'' = \nabla W(\gamma)$ and thus
$$ \(\dm|\gamma'(t)|^2 - W(\gamma(t)\)' = 0.$$
Because $\gamma(t)\to a$ and $\gamma'(t)\to 0$ as $t\to -\infty$ we deduce that $|\gamma'(t)| = \sqrt{2W(\gamma(t))}$ for every $t$. Then the path  $t\to\gamma(t+\tau)$  defined on $(-\infty,0]$ is a minimizer for $d(\gamma(-\tau),a)$ hence
$$d(\gamma(-\tau), a) =  d(x) - \tau \left(\dm |\gamma'(0)|^2 + W(\gamma(0))\) + o(\tau) = d(x) - \tau \sqrt{2W(x)} |\gamma'(0)| +o(\tau),$$
which implies that $|\nabla d(x)|  \ge  \sqrt{2 W(x)}$ if $d$ is differentiable at $x$.
\end{proof}

Then the proof of the proposition follows the classical argument of Modica-Mortola \cite{momo} using the coarea formula.

First we use the well nown fact that $|\nabla d(x)|  = 0$ almost everywhere on $D_0 := \{x\in D\mid d(x) = d(a,b)/2\}$ --- this is true of any sobolev function on any level set, the catch being that generically the level set itself is negligible. Therefore we have
$$ F(u) \ge 2\int_D \frac{|\nabla u|}{\sqrt 2} \sqrt{W(u)} \ge \int_{D\setminus D_0} |\nabla(d\circ u)| = \int_{D} |\nabla(d\circ u)|.$$
Using the coarea formula, we deduce that
$$F(u) \ge \int_0^{d(a,b)}  \per_{D}\{d\circ u = t\}\,dt.$$

\subsection{Specialization to two-component condensates}

We now specialize the preceding section to the potential $\W$ defined in \eqref{vep}
so that the functional \eqref{scalar_energy} may be rewritten as
$$F_\ep(u) =\int_D \dm|\nabla u(x)|^2 + \W(u(x))\,dx.$$
The potential $\W$ is a two-well potential with wells $a = (1,0)$ and $b = (0,1)$.

We will denote by $\dep$ the distance function associated to the potential $\W$ by \eqref{dv} and let $\mep = \dep(a,b)$ so that $\mep$ is given by \eqref{mep}.
From \cite{AAJRL2013} it follows that $\mep \sim m_0/\tep$ for some positive constant $m_0$.

To study the behaviour of $\dep(x)$ when $x$ is close to the wells $a$ and $b$, we use polar coordinates $(r_x,\theta_x)$ in $\R^2$. Then, $x$ close to $a$ corresponds to $r_x$ close to $1$ and $\theta_x$ close to $0$, while $x$ close to $b$ corresponds to $r_x$ being close to $1$ and $\theta_x$ close to $\pi/2$. We have from \eqref{da} that
\beq\label{dapol}\dep(x,a) = \inf\left\{\int_{-\infty}^0 \dm ({r'}^2 + r^2{\theta'}^2) + \frac 1 {4\ep^2} (1-r^2)^2 +\frac {1}{2\tep^2} r^4\cos^2\theta\sin^2\theta\right\}, \eeq
where the infimum is taken over all functions $r$, $\theta$ such that $r(-\infty) = 1$, $\theta(-\infty) = 0$, $r(0) = r_x$ and $\theta(0) = \theta_x$.

If we only take the infimum with respect to the function $r$, keeping $\theta$ fixed equal to the constant $0$ the minimizer is $r(t) = \tanh(C - \frac t{\sqrt 2\ep})$ or $r(t) = \coth(C - \frac t{\sqrt 2\ep})$  according to wether $r_x\le 1$ or $r_x\ge 1$ and the minimum is
\beq\label{dep1}\depr(r_x) = \frac{(1-r_x)^2)(r_x+2)}{3\ep\sqrt 2}.\eeq
If on the other hand we fix $r$ to be the constant $1$ and minimize with respect to $\theta$ we find that the minimizer is $\theta (t) = \arctan(C+\frac x\ep),$
while the minimum is
\beq\label{dep2}\dept(\theta_x) =  \frac{\sin^2\theta_x}\tep.\eeq
It is straightforward to check that $$\dep(x,a)\le\depr(r_x) +\dept(\theta_x).$$ Moreover, if we know that the minimizer $(r,\theta)$ in \eqref{dapol} satisfies $\min r = r_{\min}$ then we have
\beq\label{depbounds}\depr(r_{\min}) +{r_{\min}}^2\dept(\theta_x)\le \dep(x,a).\eeq

From these fact, we deduce the following useful behaviour of $\dep$ near $a$ and $b$.

\begin{lemm}\label{Wd} There exist $\eta,C>0$ such that for any $\ep$ small enough and  any $x=(x_1,x_2)\in \R_+\times\R_+$ we have, letting   $\tep := \ep/{\sqrt{\delta - 1}}$:

\begin{enumerate}
\item There holds
$$ \min(\dep(x), \mep - \dep(x))\le C \tep\, \W(x).$$

\item If $ \dep(x)< \frac\eta\tep$ then $\frac{{x_2}^2}{C\tep} \le \dep(x)$

\item  If $ \dep(x) > \mep -  \frac\eta\tep$ then $\frac{{x_1}^2}{C\tep} \le \mep - \dep(x).$
\end{enumerate}
\end{lemm}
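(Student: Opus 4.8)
The plan is to read everything off the two explicit one-dimensional profiles \eqref{dep1}--\eqref{dep2}, combined with the upper bound $\dep(x,a)\le\depr(r_x)+\dept(\theta_x)$ and the lower bound \eqref{depbounds}, while keeping in mind the two facts that drive the estimates: $\mep\sim m_0/\tep$ and $\ep\ll\tep$. In the polar coordinates of \eqref{dapol} the potential \eqref{vep} reads $\W(x)=\frac1{4\ep^2}(1-r_x^2)^2+\frac1{2\tep^2}r_x^4\cos^2\theta_x\sin^2\theta_x$, so its first term controls $(1-r_x^2)^2$ and, once $r_x$ and $\cos\theta_x$ are bounded away from $0$, its second term controls $\sin^2\theta_x$. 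Since $\W$ is symmetric under $u_1\leftrightarrow u_2$, which exchanges the wells $a$ and $b$, I would prove (2) and deduce (3) from that symmetry, and likewise treat only the behaviour near $a$ in (1).

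For (1) I would split on the size of $\W(x)$, fixing a small $c_0>0$. If $\W(x)\ge c_0/\tep^2$, then $\min(\dep(x),\mep-\dep(x))\le\mep/2\le m_0/\tep$ for $\ep$ small, whereas $C\tep\,\W(x)\ge Cc_0/\tep$, so any $C\ge m_0/c_0$ settles this case. If instead $\W(x)<c_0/\tep^2$, then both terms of $\W$ are $<c_0/\tep^2$; since $\ep\ll\tep$ the first gives $(1-r_x^2)^2<4c_0(\ep/\tep)^2\to0$, hence $r_x\to1$, and for $c_0$ small the second then forces $\theta_x$ near $0$ or near $\pi/2$, i.e. $x$ lies near $a$ or near $b$. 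Near $a$ a short computation gives $\depr(r_x)\le C(1-r_x^2)^2/\ep\le C\ep\,\W(x)\le C\tep\,\W(x)$, using $(1-r_x)^2\le(1-r_x^2)^2$ and $\ep\le\tep$, together with $\dept(\theta_x)=\sin^2\theta_x/\tep\le C\tep\,\W(x)$, using that $r_x^4\cos^2\theta_x$ is bounded below near $a$. Adding these and invoking the upper bound on $\dep(x,a)$ yields $\min(\dep(x),\mep-\dep(x))=\dep(x,a)\le C\tep\,\W(x)$, the equality holding because $\dep(x,a)\ll\mep$ in this regime, so that $\dep(x)=\dep(x,a)$ by \eqref{dv}.

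For (2) I assume $\dep(x)<\eta/\tep$ with $\eta$ small, so that $\dep(x)<\mep/2$ and $\dep(x)=\dep(x,a)$; let $(r,\theta)$ be a minimizer in \eqref{dapol} and $r_{\min}=\min r$. The radial part of the energy of this path is itself a competitor for $\depr(r_x)$, whence $\depr(r_x)\le\dep(x,a)<\eta/\tep$, and \eqref{depbounds} gives $\depr(r_{\min})\le\dep(x,a)<\eta/\tep$ as well; because $\ep\ll\tep$ makes $\depr(\rho)$ exceed $\eta/\tep$ unless $\rho$ is close to $1$, both $r_x$ and $r_{\min}$ tend to $1$, so for $\ep$ small one has $r_x^2\le C$ and $r_{\min}^2\ge 1/4$. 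The remaining term of \eqref{depbounds} yields $r_{\min}^2\sin^2\theta_x/\tep\le\dep(x,a)$, hence $\sin^2\theta_x\le 4\tep\,\dep(x,a)$, and therefore $x_2^2=r_x^2\sin^2\theta_x\le C\tep\,\dep(x,a)=C\tep\,\dep(x)$, which is (2). Part (3) follows verbatim after exchanging $u_1$ and $u_2$.

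The step I expect to demand the most care is the confinement argument underlying (1): the inequality must survive on the intermediate region where $x$ is close to neither well, and there the left-hand side is of order $\mep\sim1/\tep$ while $\W$ is only known to be bounded below. The argument rests on the explicit observation that $\{\W<c_0/\tep^2\}$ is confined to Euclidean neighbourhoods of $a$ and $b$, so that on the complement $\W\gtrsim1/\tep^2$ and the crude bound $\min\le\mep/2$ is enough; pinning down the threshold $c_0$ and checking that all the constants, and the smallness of $\ep$ required, are uniform is the remaining bookkeeping.
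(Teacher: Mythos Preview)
Your proposal is correct and follows essentially the same route as the paper: both arguments rest on the explicit profiles \eqref{dep1}--\eqref{dep2}, the upper bound $\dep(x,a)\le\depr(r_x)+\dept(\theta_x)$, and the lower bound \eqref{depbounds}. Your explicit case split on the size of $\W(x)$ for item~(1) spells out what the paper compresses into a one-line remark, and in item~(2) your extra observation $\depr(r_x)\le\dep(x,a)$ (obtained by dropping the angular terms from the minimizing path) cleanly bounds $r_x$ from above, a step the paper leaves implicit when it writes $x_2\le C\theta_x$; the only other difference is that you invoke the known asymptotic $\mep\sim m_0/\tep$ to get $\dep(x)<\mep/2$, whereas the paper derives the needed lower bound on $\mep$ directly from the midpoint argument.
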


\begin{proof} We begin by proving item~2. First we claim that if $\eta$ is chosen small enough, then $ \dep(x)< \frac\eta\tep$ implies that $\dep(x) < \dep(a,b)/2$ hence $\dep(x) = \dep(x,a)$. Indeed if we consider a minimizing path for $\dep(a,b)$ and let $x_0$ denote its midpoint, then from symmetry considerations we have that $\theta_0 = \pi/4$, where $(r_0,\theta_0)$ denote the polar coordinates of $x_0$, and $\dep(x_0,a) = \dep(a,b)/2$. Then, from \eqref{depbounds} and the apriori bound $\dep(a,b)\le \dept(\pi/2) = 1/\tep$ we deduce, since $\ep\ll\tep$ as $\ep\to 0$, that $r_{\min}>1/2$ if $\ep$ is chosen small enough. It follows, still using \eqref{depbounds}, that $1/(8\tep)\le \dep(x_0,a).$ This proves the claim, with $\eta = 1/8$.

Now assuming $ \dep(x)< \frac\eta\tep$ and considering a minimizing path for $\dep(a,x)$, we deduce as above from \eqref{dep1}, \eqref{depbounds} that if $\ep$ small enough then  $r_{\min}>1/2$. Plugging this information in \eqref{depbounds} we find that $\sin^2\theta_x \le 4\eta$, which implies that $\theta_x < \pi/4$ if $\eta$ is chosen small enough. Then $x_2\le C\theta_x$ for a suitable $C>0$ and $1/\sqrt 2 < \cos(\theta_x)$. We deduce that, with a possibly different constant $C$,
$$\frac{{x_2}^2}{\tep} \le C \frac{{\sin}^2\theta_x}{\tep} = C \dept(\theta_x)\le  4 C\dep(x).$$
This proves item~2 of the lemma. Since the proof of  item~3 is very similar, we omit it.

Item 1  easily follows from the bound $\dep(x,a)\le\depr(r_x) +\dept(\theta_x)$ --- and a similar inequality for $\dep(x,b)$ ---  and \eqref{dep1}, \eqref{dep2}, using the fact that  $\ep\ll\tep$.
\end{proof}

\subsection{Area of level-sets}

We need the following:

\begin{lemm} \label{lemmarea}Assume that $\alpha\in (0,1)$, that $D$ is a bounded smooth domain in $\R^2$ and that  $C_0>0$ is an arbitrary constant. Then there exist $\ep_0, C>0$ such that the following holds.

For any $\ep\in(0,\ep_0)$ and any locally lipschitz  $u:D\to \R^2$ such that
\beq\label{hyps} F_\ep(u) \le \mep\la + \Dep,\quad \dashint_D \ri^2  = \alpha,\quad \rie^2+\rje^2 - 1 \le  C_0\tep,\eeq
it holds that, for any $t\in (C\ltep, m_\ep - C\ltep)$,
\beq\label{concprop} |\om_t - \alpha |D|| \le C\tep (\ltep+\Dep),\eeq
where we used the notation
\beq\label{omt} \om_t = \left|\{x\in D\mid \dep(u(x)) \le t\}\right|.\eeq
\end{lemm}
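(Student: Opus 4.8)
The plan is to study the sublevel sets $A_s=\{x\in D:\dep(u(x))\le s\}$, for which $\om_t=|A_t|$, and to show that for $t$ in the bulk these all have measure equal to $\int_D\ri^2=\alpha|D|$ up to the stated error. I would split $D$ into a core $A_{C\ltep}$ (where $u$ is near $a=(1,0)$), a symmetric core $\{\dep(u)>\mep-C\ltep\}$ (near $b=(0,1)$), and the intermediate \emph{transition layer} $T=\{C\ltep<\dep(u)<\mep-C\ltep\}$. The whole statement reduces to two quantitative facts: a radial estimate $\int_D|1-\ri^2-\rj^2|\le C\tep(\ltep+\Dep)$, and a thinness estimate $|T|\le C\tep(\ltep+\Dep)$. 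Indeed, granting these, Lemma~\ref{Wd}(2) gives the pointwise bound $\rj^2\le C\tep\dep(u)\le C\tep\ltep$ on $A_{C\ltep}$, so $\int_{A_{C\ltep}}(1-\ri^2)=\int_{A_{C\ltep}}\rj^2+\int_{A_{C\ltep}}(1-\ri^2-\rj^2)\le C\tep(\ltep+\Dep)$; Lemma~\ref{Wd}(3) gives $\int_{\{\dep(u)>\mep-C\ltep\}}\ri^2\le C\tep\ltep$; and the layer contributes at most $(1+C\tep)|T|$. Writing $\alpha|D|=\int_D\ri^2$ as the sum of these three contributions yields $|A_{C\ltep}|=\alpha|D|+O(\tep(\ltep+\Dep))$, and since $|A_t|-|A_{C\ltep}|=|\{C\ltep<\dep(u)\le t\}|\le|T|$ for every bulk $t$, the conclusion $|\om_t-\alpha|D||\le C\tep(\ltep+\Dep)$ follows.

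For the estimate on $|T|$ one first records the cheap bound on the ``middle'' piece: on $\{\eta/\tep\le\dep(u)\le\mep-\eta/\tep\}$ Lemma~\ref{Wd}(1) gives $\W(u)\ge\eta/(C\tep^2)$, so this set has measure $\le C\tep^2 F_\ep(u)/\eta\le C\tep$ (using $F_\ep(u)\le\mep\la+\Dep\le C/\tep$). The difficulty is entirely in the two near-well tails of $T$, e.g. $\{C\ltep<\dep(u)<\eta/\tep\}$ near $a$, which carry $O(1/\tep)$ of potential energy and whose measure the potential alone only bounds by the useless $O(1/\ltep)$.

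Both remaining estimates come from the \emph{excess} energy above the interface cost. Starting from the pointwise identity $\tfrac12|\nabla u|^2+\W(u)=|\nabla(\dep\circ u)|+\tfrac12\bigl(|\nabla u|-\sqrt{2\W(u)}\bigr)^2+\bigl(\sqrt{2\W(u)}\,|\nabla u|-|\nabla(\dep\circ u)|\bigr)$ and integrating, the Sternberg bound gives $F_\ep(u)\ge\int_0^{\mep}\per_D(A_s)\,ds+\tfrac12\int_D(|\nabla u|-\sqrt{2\W})^2$. The mass constraint pins the volume fractions: from the co-area identity $\int_0^{\mep}|A_s|\,ds=\mep|D|-\int_D\dep(u)$ together with Lemma~\ref{Wd} one gets that the average volume fraction of the $A_s$ equals $\alpha$ up to a small error, so that $\per_D(A_s)\ge\la-L\,\bigl||A_s|/|D|-\alpha\bigr|$ combined with Proposition~\ref{locper} forces $\int_0^{\mep}\per_D(A_s)\,ds\ge\mep\la-C(\ltep+\Dep)$. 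Subtracting from the upper bound $F_\ep(u)\le\mep\la+\Dep$ leaves an excess of size $O(\ltep+\Dep)$ that controls simultaneously the equipartition defect and the radial potential $\tfrac1{4\ep^2}\int(1-\ri^2-\rj^2)^2$; hence $\int(1-\ri^2-\rj^2)^2\le C\ep^2(\ltep+\Dep)$ and, by Cauchy--Schwarz and $\ep\ll\tep$, the radial estimate $\int_D|1-\ri^2-\rj^2|\le C\ep\sqrt{\ltep+\Dep}\le C\tep(\ltep+\Dep)$. For the layer thinness I would then run a co-area/slicing argument: near $a$, Lemma~\ref{Wd} gives $\dep(u)\simeq\rj^2/\tep$ and $\W(u)\simeq\rj^2/(2\tep^2)$, and the smallness of the equipartition defect forces the near-optimal relation $|\nabla(\dep\circ u)|\gtrsim\dep(u)/\tep$ on the level sets, whence $|\{C\ltep<\dep(u)<\eta/\tep\}|=\int_{C\ltep}^{\eta/\tep}\!\bigl(\int_{\{\dep(u)=s\}}\!|\nabla(\dep\circ u)|^{-1}\bigr)\,ds\lesssim\int_{C\ltep}^{\eta/\tep}\per_D(A_s)\,\tfrac{\tep}{s}\,ds\simeq\la\,\tep\log\tfrac1{\tep\ltep}=O(\tep\ltep)$, the logarithm being exactly $\int\tep\,ds/s$ over the transition range; the excess contributes the $\Dep$.

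The main obstacle is the layer--thinness estimate. The honest difficulty is that a volume deviation of the level sets is \emph{not} penalised by perimeter when $\alpha$ is not a minimiser of the isoperimetric profile, so Sternberg's inequality and isoperimetry cannot by themselves localise the transition; one is forced to use the potential jointly with the gradient through equipartition, and the needed control of the equipartition defect is coupled to the radial estimate, so the separation of the angular (phase) transition energy from the radial energy, and the resulting bounds, must be carried out simultaneously rather than in sequence to avoid circularity. The slow logarithmic tails near the wells are intrinsic and are the reason the error is $\tep\ltep$ rather than $O(\tep)$.
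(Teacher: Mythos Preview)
Your overall plan matches the paper's: reduce to thinness of the transition layer $T=\{C\ltep<\dep(u)<\mep-C\ltep\}$ plus pinning some level set near volume $\alpha|D|$, then use coarea with the weight $\tep/s$ on the near-well tails to produce the logarithm. You also correctly locate where the difficulty lies.

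There is, however, a genuine gap. Your radial estimate and your equipartition control both rest on the lower bound $\int_0^{\mep}\per_D(A_s)\,ds\ge\mep\la-C(\ltep+\Dep)$. Your route to it --- isoperimetry plus the Lipschitz profile bound $\ell_\beta\ge\la-L|\beta-\alpha|$ --- converts this into a bound on $\int_0^{\mep}\bigl||A_s|-\alpha|D|\bigr|\,ds$, which is essentially $|I_\ep|\sim 1/\tep$ times the very estimate you are trying to prove. The layer-cake identity $\int_0^{\mep}|A_s|\,ds=\int_D(\mep-\dep(u))$ only yields the \emph{average} of $|A_s|$, not its $L^1$ deviation from $\alpha|D|$, and computing that average via $\int_D\dep(u)$ again presupposes the layer thinness. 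Proposition~\ref{locper} concerns localisation of perimeter near the minimal interface and does not supply the missing lower bound. You flag the circularity (``carried out simultaneously'') but do not break it; your further appeal to equipartition for the pointwise relation $|\nabla(\dep\circ u)|\gtrsim\dep(u)/\tep$ also tacitly assumes $\nabla u$ is aligned with $\nabla\dep(u)$, a second loss not controlled by the equipartition defect.

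The paper avoids the radial estimate entirely and breaks the loop by two devices you are missing. First, using only the one-sided hypothesis $\ri^2+\rj^2-1\le C_0\tep$, it shows by contradiction that \emph{some} $t_0\in I_\ep$ already satisfies $\bigl||A_{t_0}|-\alpha|D|\bigr|\le C\tep\ltep$; no control of $\int_D|1-\ri^2-\rj^2|$ is needed. Second, for $|T|$ it introduces the level-set average $v(t)=\dashint_{\gamma_t}2\W(u)/|\nabla(\dep\circ u)|$ and obtains by Jensen the refined bound $F_\ep(u)\ge\int_0^{\mep}\tfrac{|\gamma_t|}{2}\bigl(v(t)+1/v(t)\bigr)\,dt$. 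Writing $\delta(t)=\tfrac{|\gamma_t|}{2}(v+1/v)-\la$, a short case analysis bounds the coarea integrand for $|T|$ by $C(\tep/t+\tep\,\delta_+(t))$, hence $|T|\le C\tep\bigl(\ltep+\Dep+\int_{I_\ep}\delta_-\bigr)$. Since $\delta_-(t)\le(\la-|\gamma_t|)_+\le L\bigl||A_t|-\alpha|D|\bigr|\le C\tep\ltep+|T|$ via the Lipschitz profile and the anchor $t_0$, one closes by absorption. The point is that the perimeter deficit $\delta_-$ is fed back into a bootstrap rather than bounded a priori --- that is the mechanism your sketch lacks.
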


\begin{proof}
Since $t\to |\omega_t|$ increases continuously from $0$ to $|D|$, it suffices to prove first that, for some $C>0$,
\beq\label{smallarea}\left|\om_{\mep - C\ltep}\setminus \om_{C\ltep}\right|\le C\tep (\ltep+\Dep),\eeq
and second that, choosing $\widehat C>0$ large enough depending on $C$,
\beq \label{texists} \exists t\in\left[C\ltep, \mep - C\ltep\right],\quad \left| |\om_t| - \alpha |D|\right| \le \widehat C\tep \ltep.\eeq

We begin by proving the second claim, namely that given $C>0$ we may choose $\widehat  C>0$ such that \eqref{texists} holds. For this we assume that \eqref{texists} is not true for some $\widehat C>0$ and prove that $\widehat C$ cannot be too large. Since $t\to|\om_t|$ is increasing, the fact that \eqref{texists} is false implies that either
\beq\label{absurd}\forall t\le \mep - C\ltep,\quad |\om_t| \le  \alpha |D| - \widehat C\tep \ltep,\eeq
or for every $t\ge C\ltep$ we have $|\om_t| \ge \alpha |D| + \widehat C\tep \ltep$. We will assume the former, the other case can be treated in a similar fashion.

We have \beq\label{splitmass}\alpha |D| = \int_{\om_{\mep - C\ltep}} \ri^2 + \int_{D\sm \om_{\mep - C\ltep}} \ri^2.\eeq
Using item~3 of the previous lemma, we know that $\ri^2 \le C_1 C\tep \ltep$ on $D\sm \om_{\mep - C\ltep}$, where $C_1$ is the constant occuring in Lemma~\ref{Wd}. Moreover, using \eqref{hyps},
$$\int_{\om_{\mep - C\ltep}} \ri^2 =  |\om_{\mep - C\ltep}| + \int_{\om_{\mep - C\ltep}} \(\ri^2 - 1\)\le  |\om_{\mep - C\ltep}| + C_0\tep.$$
Then we deduce from \eqref{splitmass} and \eqref{absurd} that
$$ \alpha |D| \le \alpha |D| - \widehat C\tep \ltep + C_1 C\tep \ltep + C_0\tep,$$
which is clearly a contradiction  if $\widehat C$ is large enough and $\tep$ is small enough, thus proving \eqref{texists}.

It remains to prove \eqref{smallarea}, which is the crucial point  in the proof of Theorem~\ref{enloc}.  First we introduce some notation: Since $u$ is locally lipschitz, we know that for almost every $t>0$ the level set $\{\dep\circ u = t\}$ is empty or a lipschitz curve and we may define
$$\gamma_t = \{x\in D\mid \dep(u(x)) = t\},\quad  v(t) = \dashint_{\gamma_t} \frac{2\W(u(x))}{|\nabla(\dep\circ u) (y)|}\,d\ell(y),\quad  a(t) = \int_{\gamma_t} \frac{d\ell(y)}{|\nabla(\dep\circ u) (y)|},$$
where $d\ell$ denotes the line element on the curve $\gamma_t$.

We have, using the coarea formula, and letting $I_\ep = [C\ltep, \mep - C\ltep]$,
$$\left|\om_{\mep - C\ltep}\setminus \om_{C\ltep}\right| = \int_{t\in I_\ep} a(t)\,dt.$$
From Lemma~\ref{Wd} we know that $\W(u)\ge (C\tep)^{-1} \min(\dep(u),\mep-\dep(u))$. Therefore
$$  a(t) \le \dm \frac\tep t|\gamma_t| v(t),$$
where $|\gamma_t|$ denotes the length of the curve $\gamma_t$.
It follows that
\beq\label{diffarea} \left|\om_{\mep - C\ltep}\setminus \om_{C\ltep}\right| \le  \dm\int_{t\in I_\ep}  \frac\tep t|\gamma_t| v(t)\,dt\eeq

On the other hand,  using again the coarea formula,
\begin{equation*}
\begin{split}
F_\ep(u) &= \dm\int_{D} |\nabla u|^2 + \int_{D} \W(u)\\
 {} &\ge \int_0^\mep \int_{\gamma_t} \dm \frac{|\nabla u|^2}{|\nabla(\dep\circ u)|} +  \frac{\W(u)}{|\nabla(\dep\circ u)|}\,d\ell\,dt.
\end{split}
\end{equation*}
Using Jensen's inequality and the fact that $|\nabla \dep\circ u|\le |\nabla \dep(u)||\nabla u| = \sqrt{2\W(u)}|\nabla u|$, we have
$$ \dashint \frac{|\nabla u|^2}{|\nabla(\dep\circ u)|} \ge \(\dashint \frac{|\nabla(\dep\circ u)|}{|\nabla u|^2}\)^{-1} \ge \frac1{v(t)}.$$
It follows that
$$F_\ep(u) \ge \int_0^\mep \frac{|\gamma_t|}{2}\( v(t) + \frac1{v(t)}\)\,dt.$$
We may then substract $\mep\la$ and obtain, in view of our hypothesis \eqref{hyps}
\beq \label{enbup}\begin{split} \Dep & \ge F_\ep(u) - \mep\la \\
                                                     & \ge \int_0^\mep \frac{|\gamma_t|}{2}\( v(t) + \frac1{v(t)}\)\,dt - \mep\la - C\tep\\
                                                     & \ge \int_{t\in I_\ep} \frac{|\gamma_t|}{2}\( v(t) + \frac1{v(t)}\) - \la \,dt - C\ltep.
                                                     \end{split}
\eeq
Let $\delta(t) = \frac{|\gamma_t|}{2}\( v(t) + \frac1{v(t)}\) - \la$. We wish to bound from above the integrand in \eqref{diffarea}, possibly in terms of $\delta(t)$. We distinguish several cases, $C$ denotes a generic constant independant of $\ep$.

\begin{itemize}
\item[-] If $\la\le |\gamma_t| v(t) /  4$ then $\delta(t)\ge |\gamma_t|v(t) /2$ and therefore, using the fact that $t\ge C\ltep$, if $\ep$ is small enough then
$$ \frac\tep t|\gamma_t| v(t)\le v(t)\le C\tep \delta(t).$$
\item[-] If $|\gamma_t| v(t) \le 4\la $ then, since $\la$ is independent of $\ep$,
$$\frac\tep t|\gamma_t| v(t)\le C\frac{\tep}{t}.$$
\end{itemize}
It follows, in view of \eqref{enbup} and since $I_\ep = [C\ltep, \mep - C\ltep]$, that
\beq\label{almostarea}\left|\om_{\mep - C\ltep}\setminus \om_{C\ltep}\right| \le C\tep \int_{t\in I_\ep}  \frac1t+ \delta_+(t)\,dt\le C\tep\(\ltep + \Dep + \int_{t\in I_\ep} \delta_-(t)\,dt\).\eeq
It remains to bound the last integral on the right-hand side. For this we note that, since $\delta(t) \ge |\gamma(t)| - \la$, we have
$$\delta_-(t) \le \(\la - |\gamma(t)|\)_+.$$
But  $\la - |\gamma_t|\le \la - \ell_\beta$, where $\beta = |\om_t|/|D|$, in view of the definition \eqref{minper}. Since the isoperimetric profile function $\alpha\to\la$ is lipschitz (see for instance \cite{ros}) we deduce that $\la -  |\gamma_t|\le C \left|\alpha |D| - |\om_t|\right|.$ From \eqref{texists} there exists $t_0$ such that
$\left|\alpha |D| - |\om_{t_0}|\right| \le C\tep\ltep,$
therefore for any $t\in I_\ep$ we have
$$\delta_-(t) \le C \left|\alpha |D| - |\om_t|\right| \le \left|\alpha |D| - |\om_{t_0}|\right| + \left| |\om_{t_0}|- |\om_{t}|\right|\le C\tep\ltep + \left|\om_{\mep - C\ltep}\setminus \om_{C\ltep}\right|.$$
Together with \eqref{almostarea} we deduce that
$$\left|\om_{\mep - C\ltep}\setminus \om_{C\ltep}\right| \le C\tep\(\ltep + \Dep + \left|\om_{\mep - C\ltep}\setminus \om_{C\ltep}\right|\),$$
which implies \eqref{smallarea} and the lemma if $\tep$ is small enough.
\end{proof}

\subsection{Proof of Theorem~\ref{enloc}} Assume the hypothesis of Theorem~\ref{enloc} are satisfied. We assume that $\Dep = o(\mep\la)$ as $\ep\to 0$ otherwise the  conclusion is trivial. Then, as is well-known in the scalar case since Modica-Mortola \cite{momo} and in this case from Aftalion-Royo Letellier \cite{AAJRL2013}, any sequence $\{\ep\}$ converging to zero admits a subsequence (not relabeled)   such that $\{(\riep, \rjep)\}_\epp$ converges to $(\chi_{\om_\alpha}, \chi_{\om_\alpha^c})$,  where $\om_\alpha$ is a minimizer of \eqref{minper}. We consider such a subsequence, for which
\beq\label{bvcv} \rie \to \chi_{\om_\alpha}, \rje \to \chi_{\om_\alpha^c}\eeq
weakly in $BV$, and strongly in $L^1$.

We wish to prove that for any $\eta>0$, denoting $V_\eta$ an $\eta$-neighbourhood of  $\gamma_\alpha := \partial\om_\alpha\cap D$,  there exists $C>0$ such that if $\epp$ is small enough depending on $\eta$ we have
\beq\label{lowb} F_\ep(\riep, \rjep,V_\eta) \ge \mep\la  - C\(\Dep + \ltep\) .\eeq
(Note that the second assertion in \eqref{uppb} follows immediately from the above and \eqref{hyptheo}). We begin by proving
\begin{lemm} \label{diffsym} Assume $\alpha\in (0,1)$ and let $\om_\alpha$ be a minimizer of \eqref{minper}. Then for any $\delta>0$ there exists $\eta>0$ such that if $\om_{\alpha'}$ is a minimizer for \eqref{minper} with $|\alpha - \alpha'| < \eta$ and if $|\om_\alpha \triangle \om_{\alpha'}| < \eta$, then
$$\om_{\alpha'}\subset \om_\alpha + B_\delta.$$
\end{lemm}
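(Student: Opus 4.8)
The plan is to prove the \emph{contrapositive} via a uniform lower density estimate for the minimizers, regarded as almost-minimizers of the perimeter. The key preliminary fact is that a minimizer $\om_{\alpha'}$ of $\per_D$ under the constraint $|\om| = \alpha'|D|$ is a $(\Lambda, r_0)$-minimizer of perimeter relative to $D$: comparing $\om_{\alpha'}$ with any competitor $\om'$ that differs from it only inside a ball $B(x,r)$ of radius $r < r_0$, and restoring the volume constraint by a modification localized in a fixed ball $B_0 \Subset D$ (at a perimeter cost proportional to $|\om' \triangle \om_{\alpha'}|$), one obtains $\per_D(\om_{\alpha'}) \le \per_D(\om') + \Lambda\,|\om' \triangle \om_{\alpha'}|$. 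First I would verify that the constants $\Lambda$ and $r_0$ may be chosen uniformly over all minimizers with $|\alpha' - \alpha| < \eta_1$, for some fixed $\eta_1 > 0$. This is where the hypothesis $|\alpha - \alpha'| < \eta$ in the statement is used; the uniformity reflects the fact that the Lagrange multiplier --- equivalently the constant curvature of the circular arcs forming $\partial\om_{\alpha'} \cap D$ --- stays bounded as $\alpha' \to \alpha \in (0,1)$.

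With almost-minimality in hand, I would invoke the standard uniform lower density estimate for $(\Lambda, r_0)$-minimizers (see \cite{giusti}, or \cite{ros} for the planar case): there exist $c_0 > 0$ and $r_1 \in (0, r_0]$, depending only on $D$, $\Lambda$ and $r_0$, such that for every minimizer $\om_{\alpha'}$ in the family and every point $x$ in the closure of its good representative,
$$ |\om_{\alpha'} \cap B(x,r)| \ge c_0\, r^2 \qquad\text{for all } 0 < r < r_1. $$
Because $D$ is smooth, $|D \cap B(x,r)| \ge c\,r^2$ for small $r$ even when $x \in \partial D$, so this estimate holds irrespective of whether $x$ lies in the interior of $D$ or on its boundary.

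The conclusion is then immediate. Given $\delta > 0$, set $r := \tfrac12\min(\delta, r_1)$ and $\eta := \min(\eta_1,\, c_0 r^2)$. Suppose some minimizer $\om_{\alpha'}$ with $|\alpha - \alpha'| < \eta$ and $|\om_\alpha \triangle \om_{\alpha'}| < \eta$ contained a point $x$ with $\dist(x, \om_\alpha) \ge \delta$. Then $B(x,\delta) \cap \om_\alpha = \varnothing$, hence $B(x,r) \cap \om_\alpha = \varnothing$ and $\om_{\alpha'} \cap B(x,r) \subset \om_{\alpha'} \setminus \om_\alpha$, so the density estimate gives
$$ |\om_\alpha \triangle \om_{\alpha'}| \ge |\om_{\alpha'} \setminus \om_\alpha| \ge |\om_{\alpha'} \cap B(x,r)| \ge c_0\, r^2 \ge \eta, $$
contradicting $|\om_\alpha \triangle \om_{\alpha'}| < \eta$. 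Therefore no such point exists and $\om_{\alpha'} \subset \om_\alpha + B_\delta$.

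I expect the main obstacle to be the first step, namely establishing that the almost-minimality constants $\Lambda$ and $r_0$ can be taken uniform over the whole family $\{\om_{\alpha'} : |\alpha - \alpha'| < \eta_1\}$; everything afterwards is a one-line contrapositive. In the present two-dimensional setting this step can in fact be made fully explicit, since each $\partial\om_{\alpha'} \cap D$ is a finite union of circular arcs whose curvature equals the (uniformly bounded) Lagrange multiplier and which meet $\partial D$ orthogonally; one may then either read off the uniform density bound directly from this structure or simply quote it from the regularity theory. It is worth emphasizing that the argument uses only the \emph{lower} density estimate and requires neither convergence of the perimeters nor Hausdorff convergence of the boundaries $\partial\om_{\alpha'}$.
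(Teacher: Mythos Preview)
Your argument is correct and takes a somewhat different route from the paper's. The paper proceeds by contradiction and compactness: assuming the conclusion fails along a sequence $\om_{\alpha_n}$ with $\alpha_n\to\alpha$ and $|\om_{\alpha_n}\triangle\om_\alpha|\to 0$, it observes that $\chi_{\om_{\alpha_n}}\to\chi_{\om_\alpha}$ weakly in $BV$, and then simply invokes ``the regularity of sets with minimal perimeter'' (citing Giusti and Cozzi--Figalli) to reach a contradiction. The black box hidden behind that citation is exactly the uniform lower density estimate you make explicit: $L^1$ convergence of a sequence of almost-minimizers upgrades to the inclusion $\om_{\alpha_n}\subset\om_\alpha+B_\delta$ for large $n$ precisely because any piece of $\om_{\alpha_n}$ lying outside $\om_\alpha+B_\delta$ would carry at least $c_0 r^2$ of measure. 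So your proof unpacks the mechanism the paper leaves implicit, and as a bonus produces a quantitative $\eta=\eta(\delta)$ rather than a soft existence statement. The one step you flag as the potential obstacle --- uniformity of the almost-minimality constants $(\Lambda,r_0)$ over $|\alpha'-\alpha|<\eta_1$ --- is indeed the only place requiring care; your remark that in two dimensions the free boundary consists of circular arcs of uniformly bounded curvature makes this transparent, and in any case the paper's appeal to regularity theory relies on the same fact.
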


\begin{proof} Assume by contradiction that there exists $\delta>0$ and a sequence $\{\omega_{\alpha_n}\}_n$ of minimizers of \eqref{minper} such that $\alpha_n\to\alpha$ and $|\om_{\alpha_n}\triangle\om_\alpha|\to 0$. Then every subsequence of $\{\chi_{\omega_{\alpha_n}}\}_n$ has a subsequence which weakly converges in $BV$. But the only possible limit is $\chi_{\omega_\alpha}$ since $|\om_{\alpha_n}\triangle\om_\alpha|\to 0$. Therefore the whole sequence converges to $\chi_{\omega_\alpha}$ weakly in $BV$.

The result then follows from the regularity of sets with minimal perimeter (see for instance the book by Giusti \cite{giusti}, or the recent notes by Cozzi and Figalli \cite{cozzifigalli}).
\end{proof}

\begin{proof}[Proof of Theorem~\ref{enloc}] Let
$$\om_t = \{\dep\circ u_\ep  < t\},\quad \alpha(t) = \frac{|\om_t|}{|D|},\quad \ell(t) = \ell_{\alpha(t)},\quad  I_\ep = [C\ltep, \mep - C\ltep],$$
where $\dep$ is the distance defined in \eqref{dv} choosing as potential the function $\W$ defined in \eqref{vep}, and where $\la$ is defined in \eqref{minper}.

We have from \eqref{hyptheo} that
\beq\label{bd1th}\int_0^{\mep}\per_D(\om_t)\,dt \le F_\ep(u) \le \mep\la  + \Dep,\eeq
therefore
$$\int_{I_\ep}\per_D(\om_t)\,dt \le  \int_{I_\ep} \la\,dt +\Dep + C\ltep.$$
From \eqref{hyptheo} we ma apply Lemma~\ref{lemmarea} to $(|\rie|,|\rje|)$ to find that for every $t\in I_\ep$ we have
\beq\label{goodarea}|\om_t - \alpha|D||\le C\tep (\Dep+\ltep), \eeq
which implies, since  $\alpha\to \la$ is Lipschitz, that
\beq\label{sandwich}\per_D(\om_t)\ge \ell_{\alpha(t)}\ge \la - C\tep \Dep+\ltep).\eeq
Together with \eqref{bd1th} this yields
\beq\label{thstep1}\int_{I_\ep}\left|\per_D(\om_t) - \la\right|\,dt \le C\(\Dep + \ltep\).\eeq

We now make use of the localisation of perimeter proved in Proposition~\ref{locper}. According to Proposition~\ref{locper}, for any $t\in I_\ep$ there exists $\tomt$ which minimizes \eqref{minper} for $\alpha(t)$ such that
\beq \label{pernearmin} \ell(\partial\om_t\cap V_{\delta,t}^c) \le C(\per_D(\om_t) - \ell_{\alpha(t)}),\eeq
where $V_{\delta,t}$ denotes a $\delta$-neighbourhood of $\partial\tomt\cap D$. This implies in particular the existence of $C>0$ such that if we choose $\eta>0$, then for any  $t\in I_\ep$ we have
\beq\label{triangle1}\per_D(\om_t) - \ell_{\alpha(t)} \le \eta \quad\Longrightarrow\quad |\om_t\triangle\tomt| < C\eta.\eeq
Using Lemma~\ref{Wd} there exists $\beta>0$ independent of $\ep$ such that $\om_{C\ltep}\subset {u_\ep}^{-1}(B(a,\beta))$. But from  Lemma~\ref{lemmarea} we have that $|\om_{C\ltep}|$ converges  to $|\om_\alpha|$ as $\ep\to 0$ while from \eqref{bvcv} we have that $|\om_\alpha\triangle {u_\ep}^{-1}(B(a,\beta))|$ converges to $0$ as $\ep\to 0$. It follows that if $\ep$ is small enough then
\beq \label{triangle2}|\om_{C\ltep}\triangle\om_\alpha|\le \eta,\eeq
Using  Lemma~\ref{lemmarea} we have also that $|\om_t\triangle \om_{C\ltep}|<\eta$ for small $\ep$, which together with \eqref{triangle1} and \eqref{triangle2} implies that given $\eta>0$, if $\ep>0$ is small enough then  for any $t\in I_\ep$
$$|\tomt\triangle\om_\alpha| < C\eta.$$
In view of  Lemma~\ref{diffsym}, if choosing $\eta$ small enough we deduce that when $\ep$ is small enough
\beq\label{altern}\per_D(\om_t) - \ell_{\alpha(t)} \le \eta \quad\Longrightarrow\quad \partial\tomt\cap D\subset V_\eta.\eeq

Combining \eqref{pernearmin} and \eqref{altern}, we thus proved that $\forall t\in I_\ep$,
$$ \per_D(\om_t) - \ell_{\alpha(t)}\le \eta \Longrightarrow \per_{V_{2\delta}^c}(\om_t) \le C(\per_D(\om_t) - \ell_{\alpha(t)}).$$
Let $T$ denote the set of $t$'s such that $\per_D(\om_t) - \ell_{\alpha(t)}> \eta$ is bounded above by $(\Dep +C\ltep)/\eta$. We have
\begin{equation*}
\begin{split}
\int_{I_\ep}\per_{V_{3\delta}}(\om_t)\,dt &\ge  \int_{I_\ep}\per_{D}(\om_t) - \per_{V_{2\delta}^c}(\om_t)\,dt\\
&\ge  \int_{t\in I_\ep \setminus T}\ell_{\alpha(t)} - C(\per_D(\om_t) - \ell_{\alpha(t)})\,dt.
\end{split}
\end{equation*}
Using \eqref{thstep1} and \eqref{sandwich} we find
$$\int_{I_\ep}\per_{V_{3\delta}}(\om_t)\,dt \ge  \int_{t\in I_\ep\setminus T}\la\,dt - C(\Dep + \ltep).$$
But \eqref{thstep1} also  implies that the measure of the set $T$ of $t$'s such that $\per_D(\om_t) - \ell_{\alpha(t)}> \eta$ is bounded above by $(\Dep +C\ltep)/\eta$, therefore
$$\int_{I_\ep}\per_{V_{3\delta}}(\om_t)\,dt \ge  \int_{I_\ep}\la\,dt - \frac\la\eta(\Dep + \ltep) - C(\Dep + \ltep).$$
The left-hand side being bounded above by $F_\ep(u, V_{3\delta})$ we deduce that $$F_\ep(u, V_{3\delta})\ge \mep\la - C(\Dep + \ltep),$$
proving \eqref{lowb} and the theorem.
\end{proof}


\section{Proof of Theorems \ref{upstripes} and  \ref{theolong}}

\subsection{Proof of Theorem \ref{upstripes}}

The idea
 is that the modulus of the wave functions are invariant in the $y$ direction and will only depend on the
 $x$ variable, while the gradient of the phase has a staircase like increase
  in the $x$ direction. This construction is inspired from the test function of \cite{KT}.

We want to use a small scale  to build an upper bound  with stripes at this scale. We will assume that
 $u_1^2+u_2^2=1$. The natural scale is therefore proportional to $\te$. We set $\Omega=\lambda/\te^2$ and the scale $b_\ep=\mu/\sqrt\Omega=\mu \te/\sqrt \lambda$.
 We define $v_k(x)$ on a square $K$ of size 1 to be $u_k(b_\ep x)$. Therefore the rescaled energy on $K$ is
 \beq\label{Erescaled}
 E_1(v_1,v_2)=\int_{K} \sum_{k=1,2} \frac 12 |\nabla v_k-i \mu^2 x^\perp v_k|^2+\frac {\mu^2}{2\lambda} |v_1|^2  |v_2|^2.\eeq The upper bound for our full energy is then,
 for a well-chosen center of the grid,
 \beq\label{ubener} \frac{|D|}{b_\ep^2} \min E_1.\eeq
  We define $\rho_k=|v_k|$, $v_k=\rho_k e^{i\varphi_k}$ and $j_k=\nabla \varphi_k -\mu^2 x^\perp$. We will assume that neither $\rho_k$ nor $j_k$ depends on $y$, and that they are both $1$-periodic with respect to $x$. We will look for $\rho_1$ such that
  $$\rho_1(0)=\rho_1(1)=0, \ \rho_1 (1/2)=1,\ \rho_1 \quad\hbox{is even with respect to } 1/2.$$
  Moreover, since $\rho_1^2+\rho_2^2=1$, then ${\rho_1'}^2+{\rho_2'}^2={\rho_1'}^2/(1-\rho_1^2)$.

  Since the ground state of $E_1$ satisfies $\diver (\rho_k^2 j_k)=0$, this implies that $\rho_k^2 j_{k,x}$ is constant and we can set this constant equal to 0, which implies that $j_{k,x}=0$. Then, since $\curl j_k =-2 \mu^2$,  we have
  $$\frac{\partial j_{k,y}}{\partial x}=2 \mu^2.$$ We point out that $j_{k,y}$ can only have a jump where $\rho_k$ vanishes. This yields, for $x\in[0,1)$,
  $$ j_{1,y}=2 \mu^2 (x-\frac 12),$$
  $$j_{2,y}=2 \mu^2 x \hbox{ for } x\in (0,\frac 12),\ j_{1,y}=2 \mu^2 (x- 1)  \hbox{ for } x\in (\frac 12,1).$$ Since $\rho_2^2=1-\rho_1^2$, we find that
  $$\frac12\sum_{k=1,2}\int_0^1 \rho_k^2j_k^2=4 \mu^4\int_0^{1/2}(1-\rho_1^2) x^2+\rho_1^2 (x-\frac 12)^2= \mu^4 \(\frac 16+\frac\alpha 2 - 4 \int_0^{1/2}x\rho_1^2\).$$
  The energy $E_1$ of our test function is therefore
  \beq\label{testener} \mu^4 \(\frac 16+\frac\alpha 2\)+\frac 12 \int_0^1 \frac{{\rho_1'}^2}{(1-\rho_1^2)}+\frac{\mu^2}{\lambda}\rho_1^2 (1-\rho_1^2) - 4\mu^4 x\rho_1^2.\eeq We make a change of function $\rho_1=\sin \theta$ and recall (\ref{ubener}), which yields as an upper bound
  $$|D|\Omega \left (\(\frac 16+\frac\alpha 2 - 4 \int_0^{1/2}x\sin^2\theta\) \mu^2+\frac 1 {\mu^2} \int_0^{1/2}{\theta '}^2+\frac 1{4 \lambda} \int_0^{1/2} \sin^2 2\theta\right ).$$ This yields the Theorem.



\subsection{Proof of Theorem  \ref{theolong}}
      Let $(u_{1,\ep}, u_{2,\ep})$ be a minimizer of $E_\ep$ and let
      $E_\ep(K(x,R\tep))$ be the energy of $(u_{1,\ep}, u_{2,\ep})$ integrated on the square
       $K(x,R\tep)$.

 Because of Theorem~\ref{upstripes}, we have a bound for $E_\ep(K(x,R\tep))$ of the order of  $R^2$ for almost all squares, in the sense that
 for all $\eta>0$, there exists a constant $C$ such that
 \beq\label{eqeta}
 |\{ x \hbox{ s.t. } E_\ep(K(x,R\tep))>CR^2 \} |<\eta\eeq
 We are going to prove that each $x$ such that $E_\ep(K(x,R\tep))<CR^2$ it holds that
 $$\dashint_{K(x,R\tep)} |u_1|^2>\alpha \hbox{ and } \dashint_{K(x,R\tep)} |u_2|^2>\alpha,$$
 where $\alpha$ depends only on $C$ and $\lambda$.
 The conclusion of the Theorem will follow from (\ref{eqeta}) and this claim.

 We are going to prove the claim by contradiction. Assume that
  $$E_\ep(K(x,R\tep))<CR^2$$ and $\dashint_{K(x,R\tep)} |u_1|^2<\alpha$.
  From the energy bound, we infer that
  $$\dashint_{K(x,R\tep)} (1-|u_1|^2-|u_2|^2)^2<C\frac {\ep^2}{\tep^2}.$$ Therefore, for $\ep$ small,
  \beq \label{newu}\dashint_{K(x,R\tep)} |1-|u_1|^2-|u_2|^2|<\alpha \hbox{ and }\dashint_{K(x,R\tep)} |1-|u_2|^2|<2\alpha.\eeq

  We rescale by $R\tep$ the length and call $\tilde u_i$ the rescaled functions. Then, the rescaled rotation being defined as  $\tilde \O=R^2 \tep^2\O=R^2\lambda$, from the energy bound and  (\ref{newu}) we deduce that on the rescaled square $K$ of sidelength $1$ we have
  \beq\label{newE}\int_{K} |\nabla \tilde u_2-i\tilde u_2 \tilde \Omega \times r|^2+\frac 1 \alpha |1-|\tilde u_2|^2|<C.\eeq

  On the other hand we may use  the energy estimates and vortex constructions from the Ginzburg-Landau theory \cite{SSbook}, using $\sqrt\alpha$ as the Ginzburg-Landau parameter. These imply, after considering the different possible cases  $\tilde \O <C \log\alpha$, $\log\alpha \ll  \tilde \O \ll 1/\alpha$ and $1/\alpha < C\tilde\O$ that the following lower-bound holds, where $c>0$ is universal, if $\tilde\O>1$, $\alpha<1$:
  $$\int_{K} |\nabla \tilde u_2-i\tilde u_2 \tilde \Omega \times r|^2+\frac 1 \alpha |1-|\tilde u_2|^2|\geq c{\tilde \Omega}.$$
Recalling that $\tilde \O=R^2 \tep^2\O=R^2\lambda$, we deduce that if $R$ is sufficiently large, and $\alpha$ small, this lower-bound contradicts (\ref{newE}) and the claim holds.
Note that we could also not resort to Ginzburg-Landau theory and simply argue that the left-hand side of \eqref{newE} must be large if $\tilde\O$ is large enough and $\alpha$ is small enough by a compactness argument: assume there exists $\tilde\O_n\to +\infty$, $\alpha_n\to 0$ and $u_{2,n}$ satisfying \eqref{newE}, then obtain a contradiction.








\end{document}